\documentclass[12pt]{article}

\usepackage[a4paper]{geometry}
\usepackage{amssymb}
\usepackage{amsmath}
\usepackage{amsthm}

\usepackage[dvipdfmx]{graphicx}

\theoremstyle{definition}
\newtheorem{theorem}{Theorem}[section]
\newtheorem{proposition}[theorem]{Proposition}
\newtheorem{lemma}[theorem]{Lemma}
\newtheorem{corollary}[theorem]{Corollary}

\newtheorem{definition}[theorem]{Definition}

\newtheorem{assumption}[theorem]{Assumption}
\newtheorem*{definition*}{Definition}
\newtheorem*{example*}{Example}
\newtheorem*{problem*}{Problem}
\newtheorem*{problems*}{Problems}

\theoremstyle{remark}
\newtheorem{remark}[theorem]{Remark}
\newtheorem*{remark*}{Remark}
\newtheorem{remarks}[theorem]{Remarks}
\newtheorem*{remarks*}{Remarks}

\numberwithin{equation}{section}

\title{Characterization of spacetime singularities for the Schr\"odinger equation by initial state}

\author{
Takeru {\scshape Fujii}\footnote{Graduate School of Mathematical Sciences, 
The University of Tokyo, 3-8-1 Komaba, Meguro-ku, Tokyo 153-8914, Japan.
E-mail: \texttt{fujii-takeru942@g.ecc.u-tokyo.ac.jp}. 
}
\ \& 
Kenichi {\scshape Ito}\footnote{Department of Mathematics, Graduate School of Science, Kobe University,
1-1, Rokkodai, Nada-ku, Kobe 657-8501, Japan.
E-mail: \texttt{ito-ken@math.kobe-u.ac.jp}. 
}
}

\date{}

\begin{document}
\allowdisplaybreaks
\maketitle

\begin{abstract}
We discuss spacetime singularities of a solution to the Schr\"odinger equation with a metric perturbation and 
a sublinear potential. 
The \textit{quasi-homogeneous wave front set}, due to Lascar (1977), 
of a solution is characterized by that of the free solution, and a classical high-energy scattering data. 
In the one-dimensional case, it further reduces to the \textit{homogeneous wave front set}, 
due to Nakamura (2005), 
of the initial time-slice. 
For the proof of the former result we implement an idea inspired by Nakamura (2009), 
which was originally devised for \textit{spatial} singularities of the Schr\"odinger equation.  
As for the latter result, we use an exact Egorov-type formula for the free propagator, and a special partition of unity conforming with the classical flow. 
\end{abstract}

\medskip
\noindent
Keywords: Schr\"odinger equation, singularities, wave front set, scattering theory 

\medskip
\noindent
Mathematics Subject Classification 2020: 35A18, 35Q41, 81Q15

\tableofcontents

\section{Settings and results}

\subsection{Introduction}

In this paper, let $\mathbb R^{1+d}=\mathbb R_t\times \mathbb R^d_x$ with $d\in\mathbb N=\{1,2,\ldots\}$, 
and we investigate the singularities of a solution $u\in \mathcal D'(\mathbb R^{1+d})$ to 
the Schr\"odinger equation 
\begin{equation}
\tfrac{\partial}{\partial t}u=-\mathrm iHu,\quad 
u(0,\cdot)=\phi\in \mathcal S'(\mathbb R^d)
.
\label{25012311}
\end{equation}
The Schr\"odinger operator $H$ is time-dependent, and is of the form  
\begin{equation}
 H=\tfrac12 p_i a_{ij}(t,x)p_j+V(t,x) ,
\label{25020420}
\end{equation}
where $ p_i=-\mathrm i\partial/\partial x_i$ for $i=1,\dots,d$, 
and the \textit{Einstein summation convention} is adopted without tensorial superscripts. 
We assume that $H$ is a \textit{short-range} perturbation in the high-energy regime
of the free Schr\"odinger operator 
\begin{equation}
K=-\tfrac12\Delta=\tfrac12 p^2,
\label{260125}
\end{equation}
but precise assumptions will be given later.

Our goal is to characterize \textit{spacetime} singularities 
of a solution $u$ to \eqref{25012311},
as a function of $(t,x)$, 
in terms of the initial state $\phi$. 
So far, concerning singularities of the Schr\"odinger equation, 
the main focus has been on \textit{spatial} singularities, or singularities of time-slices. 
The spacetime singularities have been rarely considered,
except for the earliest works, e.g., by Boutet de Monvel~\cite{MR423430}, Lascar~\cite{MR461592}, 
Parenti--Seg\`ala~\cite{MR623644} and Sakurai~\cite{MR674445,MR798031}, 
but quite recently Gell-Redman--Gomes--Hassell~\cite{MR4995130}
applied their propagation properties to construction of the Poisson operators. 
The spacetime singularities are regaining attention. 

In our first main result, we characterize the \textit{quasi-homogeneous wave front set},
introduced by Lascar~\cite{MR461592}, 
of a perturbed solution by using that of the free solution and the classical high-energy scattering data. 
Lascar~\cite{MR461592} in fact obtained propagation of such wave front set for the Schr\"odinger equation, 
however he could only compare points of the identical time components, 
due to infinite propagation speed. 
We have succeeded in removing such restriction, comparing those of different time components, 
in the sense that the free solution is explicitly written down by the initial state. 
Note that a similar result was obtained for spatial singularities by Nakamura~\cite{MR2488342}, 
and our strategy is strongly motivated by his scattering theoretic approach. 
However, adaptation of the idea to spacetime singularities is non-trivial. 
We cannot anymore use the time variable as a deformation parameter, 
since it is subject to (pseudo)differentiation and integration as one of the base variables.  
The analysis of the associated classical mechanics also gets involved due to
the time-dependence of the Hamiltonian. 

As our second main result, 
we give a sufficient condition for absence of the quasi-homogeneous wave front set, 
in terms of the \textit{homogeneous wave front set}, 
introduced by Nakamura~\cite{MR2115261}, of any other time-slice.
In addition, in the one-dimensional case, it turns out to be also a necessary condition. 
Here, a difficulty is that we have to compare singularities of functions defined on different dimensional spaces.
For the proof we will make use of an exact Egorov-type identity involving the Weyl quantization and the free propagator: 
\begin{equation}
\mathrm e^{\mathrm itK}a^{\mathrm W}(x,p_x)\mathrm e^{-\mathrm itK}
=a^{\mathrm W}(x+tp_x,p_x)
,
\label{25020716}
\end{equation}
see Nakamura's paper~\cite{MR2488342}, 
and a special partition of unity generated by the free classical flow.

While the project was in progress, 
the authors were informed that Gell-Redman--Gomes--Hassell~\cite{MR4995130} had just obtained 
the propagation of spacetime singularities, 
with application to the Poisson operator on the spacetime $\mathbb R^{1+d}$. 
However, note that their perturbations are compactly supported. 
They in fact remarked that their results are extensible 
to non-compactly supported decaying perturbations, 
but our assumption further admits non-decaying sublinear potentials. 
In addition, we emphasize that our techniques are quite different from theirs, 
and would be more elementary and simpler. 
Therefore, this paper would have applications similar to theirs
under more general settings, hopefully, providing additional insight. 

We have also found that former part of our second main result overlaps with 
Szeftel's work~\cite{MR2059146}, where reflection of singularities by an obstacle is discussed. 
However still, our techniques are different and simpler, 
and admit a wider class of perturbations, except for an obstacle. 

Lastly let us briefly review spatial singularities of the Schr\"odinger equation.
There is a considerable amount of literature on this topic, 
beginning from pioneering works by Craig--Kappeler--Strauss \cite{MR1361016}, Yajima~\cite{MR1414302} and Doi~\cite{MR1387689}. 
However, we would like to particularly mention that 
a characterization of the $C^\infty$ spatial wave front set was first settled by Hassell--Wunsch~\cite{MR2178967}, 
and then it was simplified and extended to long-range perturbations by Nakamura~\cite{MR2488342,MR2272875}. 
These characterizations were preceded by 
slightly rough sufficient (and not necessary) conditions by 
Wunsch~\cite{MR1687567} and Nakamura~\cite{MR2115261},
where the \textit{quadratic scattering wave front set} and the \textit{homogeneous wave front set} were introduced, respectively. 
Note that these wave front sets were shown to be equivalent by the second author~\cite{MR2273972}. 
The \textit{global wave front set}, or the \textit{Gabor wave front set}, 
is also well adapted to singularities of the Schr\"odinger equation,
see, e.g., Cordero--Nicola~\cite{MR3249939}, Nicola--Rodino~\cite{MR3415019} and
Cordero--Nicola--Rodino~\cite{MR3317554}, 
and in fact Schulz--Wahlberg~\cite{MR3645728} proved it is equal to the homogeneous wave front set. 
For other technical variants of the wave front set, see, e.g., works by Ito--Nakamura~\cite{MR2567509},  
Fukushima~\cite{fukushima2022propagationsingularitiesschrodingerequations}
and Cappiello--Rodino--Wahlberg~\cite{MR4727203}. 
These variations arise from how we realize the phase space infinities,
but we can also vary categories of smoothness. 
See works by \=Okaji~\cite{MR1869770} and Kato--Kobayashi--Ito~\cite{MR3657227} for the Sobolev singularities, 
Kajitani--Wakabayashi~\cite{MR1766299}, Robbiano--Zuily~\cite{MR1714756,MR1778784,MR1958605}, Takuwa~\cite{MR2082219} and 
Martinez--Nakamura--Sordoni~\cite{MR2237289,MR2554936,MR2763356} for analytic singularities,
and Kajitani--Taglialatela~\cite{MR2003749} and Mizuhara~\cite{MR2498751} for the Gevrey singularities. 
Some of the above works, 
as well as Kato--Ito--Kobayashi~\cite{MR3050803} and Kato--Ito~\cite{MR3309206,MR4343775}, 
employ the wave packet transform, or the short-time Fourier transform, 
instead of pseudodifferential operators. 
For this method we refer to Rodino--Trapasso~\cite{10.1007/978-3-030-61346-4_17}.

\subsection{Assumptions}

First we present precise assumptions of the paper. 
We let $\mathbb N_0=\{0\}\cup\mathbb N$
and $\langle x\rangle=(1+x^2)^{1/2}$, and 
denote by $\delta=(\delta_{ij})_{i,j=1,\dots,d}$ the identity matrix. 

\begin{assumption}\label{250124}
Let $a=(a_{ij})_{i,j=1,\dots,d}\in C^\infty(\mathbb R^{1+d};\mathbb R^{d\times d})$ and $V\in C^\infty(\mathbb R^{1+d};\mathbb R)$ 
satisfy the following.
\begin{enumerate}
\item
For each $(t,x)\in\mathbb R^{1+d}$ the matrix $a(t,x)$ is symmetric and positive definite. 
\item
There exists $\epsilon>0$ such that 
for any $\alpha=(\alpha_0,\alpha')\in\mathbb N_0\times \mathbb N_0^d$ 
there exists $C>0$ such that for any $i,j=1,\dots,d$ and $(t,x)\in \mathbb R^{1+d}$
\[
|\partial^{\alpha} (a_{ij}(t,x)-\delta_{ij})|\le C\langle x\rangle^{-1-|\alpha'|-\epsilon}
,\quad 
|\partial^{\alpha} V(t,x)|\le C\langle x\rangle^{1-|\alpha'|-\epsilon}
.
\]
\end{enumerate}
\end{assumption}
\begin{remark}
These are the so-called short-range conditions in the high-energy regime. 
In fact, every \textit{non-trapped} classical orbit approaches a free one in the high-energy limit, 
see Definition~\ref{25080316} and Proposition~\ref{250123}. 
Note that for the purpose of the paper it suffices to assume the above estimates locally in time, 
however we let them be global for simplicity.
\end{remark}

Under Assumption~\ref{250124} the Cauchy problem \eqref{25012311} is well-posed in the following sense. 
We quote a statement from Yajima's paper~\cite{MR1243098}. 

\begin{theorem}[\cite{MR1243098}]\label{260124}
Suppose Assumption~\ref{250124}. For any $k\in\mathbb N_0$ set 
\begin{equation*}
\Sigma(k)=\bigl\{\phi \in L^2(\mathbb R^d);\ x^\alpha\partial^\beta\phi\in L^2(\mathbb R^d)
\ \text{for all }\alpha,\beta\in\mathbb N_0^d \ \text{with }|\alpha|+|\beta|\le k
\bigr\}
,
\end{equation*}
and denote its dual space by $\Sigma(-k)=\Sigma(k)^*$. 
Then there exists a unique family $\{U(t,s)\}_{t,s\in\mathbb R}$ of isomorphisms on $\mathcal S'(\mathbb R^d)$ 
such that the following holds. 
\begin{enumerate}
\item
For any $s,t\in\mathbb R$ and $k\in\mathbb Z$, $U(s,t)$ restricts to an isomorphism on $\Sigma(k)$, 
and to a unitary operator on $\Sigma(0)=L^2(\mathbb R^d)$.
\item
For any $t,s,r\in\mathbb R$, $U(t,s)U(s,r)=U(t,r)$.
\item
For any $k\in\mathbb Z$ the mapping $\mathbb R^2\to\mathcal L(\Sigma(k)),\ (t,s)\mapsto U(t,s)$ is strongly continuous. 
\item
For any $k\in\mathbb Z$ the mapping 
$\mathbb R^2\to\Sigma(k-2),\  (t,s)\mapsto U(t,s)\phi$ is strongly continuously differentiable with 
strong partial derivatives 
\[
\tfrac{\partial}{\partial t}U(t,s)=-\mathrm iHU(t,s),\quad 
\tfrac{\partial}{\partial s}U(t,s)=\mathrm iU(t,s)H
.
\]
\end{enumerate}
\end{theorem}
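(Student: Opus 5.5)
The plan is to prove well-posedness by energy estimates in the scale $\Sigma(k)$, using the conjugate operator $A(t)=H(t)+x^2+M$ as a substitute for the harmonic oscillator. Assumption~\ref{250124} makes $a$ uniformly elliptic and bounded, and gives $|V|\le C\langle x\rangle^{1-\epsilon}$ together with bounded gradient. Hence, for $M$ large, $A(t)$ is self-adjoint and positive, with domain $\Sigma(2)$, uniformly in $t$. Its powers $A(t)^{k/2}$ then define norms equivalent to the $\Sigma(k)$ norms, again uniformly in $t$ on compact time intervals; for even $k$ this follows by elliptic regularity and commutator expansion, and for general $k$ by interpolation and duality.

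The key estimate is on the commutator $[H(t),A(t)]=[H,x^2]$. This is a first-order operator of the form $-\mathrm i(x_i a_{ij}p_j+p_i a_{ij}x_j)$, so it is bounded by $CA(t)$ in form sense. Similarly $\partial_tA(t)=\partial_tH$ is controlled by $CA(t)$, since $|\partial_tV|\le C\langle x\rangle^{1-\epsilon}$ and $\partial_ta=O(\langle x\rangle^{-1-\epsilon})$. These bounds allow me to apply Kato's abstract theory of hyperbolic (here unitary) evolution equations, with the stable family of generators $-\mathrm iH(t)$ on the Banach space $Y=\Sigma(2)\subset X=L^2$. The conjugation $A(t)H(t)A(t)^{-1}=H(t)+B(t)$, with $B(t)$ bounded on $L^2$ and strongly continuous in $t$, supplies Kato's admissibility condition. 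This yields the propagator $U(t,s)$: unitary on $L^2$, preserving $\Sigma(2)$, strongly continuous, satisfying the chain relation (ii), and differentiable as in (iv) for $k=2$.

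For higher $k$, I will differentiate $\|A^{k/2}U(t,s)\phi\|^2$ in $t$ and use $[H,A^{m}]\lesssim A^{m}$. This commutator bound comes from iterated commutators, all of which have symbols in the correct weighted classes under Assumption~\ref{250124}. Gronwall then gives $\|U(t,s)\|_{\Sigma(k)\to\Sigma(k)}\le \mathrm e^{C|t-s|}$. Negative $k$ follows by duality, since $U(t,s)^*=U(s,t)$. Taking the union and intersection over $k$ then gives isomorphisms on $\mathcal S=\bigcap_k\Sigma(k)$ and on $\mathcal S'=\bigcup_k\Sigma(-k)$. Items (iii) and (iv) for all $k$ follow by density and the uniform bounds, because $H$ maps $\Sigma(k)$ to $\Sigma(k-2)$ boundedly. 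Uniqueness holds because any solution in $C(\Sigma(k))\cap C^1(\Sigma(k-2))$ with zero data pairs against $U(t,s)^*\psi$ to give a constant pairing; this works after moving to a sufficiently negative index, so the argument extends to $\mathcal S'$.

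The main obstacle is the commutator bound $[H,A^{m}]\le CA^{m}$ for general $m$ when $V$ is sublinear and unbounded. I would handle it by showing that $A(t)$ is a globally elliptic operator in the Shubin class with symbol comparable to $\langle x\rangle^2+\langle\xi\rangle^2$. The commutator with $H$ then loses one order in $x$ and gains one in $\xi$, which stays within $A^{m}$ by the Weyl calculus for symbols satisfying the bounds of Assumption~\ref{250124}.
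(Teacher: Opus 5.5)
The paper does not prove this theorem; it quotes it from Yajima, so there is no in-paper argument to compare yours with. On its own terms your architecture is viable: Kato's hyperbolic theory with the adapted isomorphism $A(t)=H(t)+x^2+M$, then energy estimates, duality, and the pairing argument for uniqueness. Choosing $A(t)$ rather than $1+x^2+p^2$ is exactly what makes Kato's admissibility condition hold. The gap is in the step you yourself flag as the main obstacle.

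First, the symbol of $A(t)$ is not in the Shubin class. For $\alpha\neq0$, $\partial_x^\alpha(a_{ij}(t,x)\xi_i\xi_j)$ has size $\langle x\rangle^{-|\alpha|-1-\epsilon}|\xi|^2$, which over bounded $x$ gains no power of $\langle(x,\xi)\rangle$. So neither the parametrix nor the equivalence $\|A(t)^{k/2}\cdot\|\sim\|\cdot\|_{\Sigma(k)}$ can be taken from that calculus. The calculus that fits Assumption~\ref{250124} is the SG (scattering) one: Weyl--H\"ormander with metric $\langle x\rangle^{-2}|\mathrm dx|^2+\langle\xi\rangle^{-2}|\mathrm d\xi|^2$ and weight $\Lambda^2=\langle x\rangle^2+\langle\xi\rangle^2$. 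There $A(t)$ is elliptic, but only locally uniformly in $t$, since pointwise positivity of $a$ gives no ellipticity constant uniform on all of $\mathbb R_t$.

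Second, order counting alone cannot give $[H,A^m]\lesssim A^m$. For a general $P$ of order $\Lambda^{2m}$, the commutator $[H,P]$ is only of order $\Lambda^{2m}\langle\xi\rangle\langle x\rangle^{-1}$. Already for $d=1$, the principal symbol of $[H,(1+x^2+p^2)^m]$ contains a multiple of $(\partial_xa)(t,x)\,\xi^3(1+x^2+\xi^2)^{m-1}$, which has order $|\xi|^{2m+1}$ over bounded $x$.

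What rescues the estimate is the same structure you used for $m=1$. For integer $m$, $[H,A^m]=\sum_{j=0}^{m-1}A^j[H,x^2]A^{m-1-j}$. The operator $[H,x^2]$ has SG symbol of order $\langle x\rangle\langle\xi\rangle\le\Lambda^2$, so $[H,A^m]A^{-m}$ has order $0$ and is $L^2$-bounded. Likewise $\partial_t(A^m)=\sum_jA^j(\partial_tH)A^{m-1-j}$ is controlled by $A^m$.

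So run the energy estimate only for even $k$. Justify the differentiation by a Duhamel formula, or by applying Kato's theorem to the pair $\Sigma(2m+2)\subset\Sigma(2m)$ with the norm $\|A(t)^m\cdot\|$. Then obtain odd $k$ by interpolation and negative $k$ by duality, avoiding fractional powers altogether. With this repair the remaining steps of your outline go through.
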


\begin{remark}
By Theorem~\ref{260124} we can solve \eqref{25012311} for general $\phi\in\mathcal S'(\mathbb R^d)$ 
since $\bigcup_{k\in\mathbb Z}\Sigma(k)=\mathcal S'(\mathbb R^d)$. 
\end{remark}

We always fix the initial time at $s=0$, and thus abbreviate $U(t)=U(t,0)$. 
Now the purpose of the paper is to identify the singularities of 
a solution $u\in \mathcal D'(\mathbb R^{1+d})$ to \eqref{25012311} given by 
\begin{equation}
u(t,x)=(U(t)\phi)(x)
.
\label{25012917}
\end{equation}
A characterization will be given in terms of the free solution 
$u_K\in \mathcal S'(\mathbb R^{1+d})$ defined as 
\begin{equation}
u_K(t,x)=(\mathrm e^{-\mathrm itK}\phi)(x)
,
\label{250204}
\end{equation}
see \eqref{260125} for $K$. 
In the one-dimensional case, we will further rewrite it 
more directly by the initial time-slice $\phi\in\mathcal S'(\mathbb R^d)$.

\subsection{Quasi-homogeneous wave front set}

In the analysis of singularities for a Schr\"odinger-type equation, 
it is more natural to discuss the \textit{quasi-homogeneous wave front set} introduced by Lascar~\cite{MR461592}. 
Let us reformulate it in the semiclassical manner. 
Recall that, in a general dimensional space $\mathbb R^n$ with $n\in\mathbb N$, 
the \textit{Weyl quantization} of a symbol $a\in C^\infty_{\mathrm c}(\mathbb R^{2n})$ 
is defined formally for any $v\in\mathcal S'(\mathbb R^{n})$ as 
\begin{align*}
&a^{\mathrm W}(z,p_z)v(z)
=(2\pi)^{-n}\int_{\mathbb R^{2n}}\mathrm e^{\mathrm i(z-w)\zeta}
a\bigl(\tfrac{z+w}2,\zeta\bigr)v(w)\,\mathrm dw\mathrm d\zeta
.
\end{align*}

\begin{definition}
Define the \textit{quasi-homogeneous wave front set of order} $\theta\in (0,\infty)$ of $v\in \mathcal S'(\mathbb R^{1+d})$: 
\[\mathop{\mathrm{qh\text{-}WF}}\nolimits^\theta (v)\subset \mathbb R^{1+d}\times (\mathbb R^{1+d}\setminus\{0\})\] 
as a complement of the set of all $(s,y,\sigma,\eta)\in\mathbb R^{1+d}\times (\mathbb R^{1+d}\setminus\{0\})$ such that 
there exists $a\in C^\infty_{\mathrm c}(\mathbb R^{2(1+d)})$  satisfying 
\[
a(s,y,\sigma,\eta)\neq 0,\qquad 
 \|a^{\mathrm W}(t,x,h^\theta p_t,hp_x)v\|_{L^2_{t,x}}=\mathcal O(h^\infty)\ \ \text{as }h\to +0
.
\]
\end{definition}

\begin{remarks}\label{250207}
\begin{enumerate}
\item
$\mathop{\mathrm{qh\text{-}WF}}\nolimits^1 (v)$
coincides with the ordinary wave front set $\mathop{\mathrm{WF}}(v)$. 

\item
In general, for any $0<\rho<\theta<\infty$,  
$\mathop{\mathrm{qh\text{-}WF}}\nolimits^{\theta} (v)$ refines 
$\mathop{\mathrm{qh\text{-}WF}}\nolimits^\rho (v)$ at $\{\xi=0\}$, 
while it degrades the rest down to $\{\tau=0\}$. 
If we borrow terminologies from Melrose~\cite{MR1291640}, $\mathop{\mathrm{qh\text{-}WF}}\nolimits^{\theta} (v)$ is simultaneously a \textit{blow-up} and a \textit{blow-down} 
of $\mathop{\mathrm{qh\text{-}WF}}\nolimits^\rho (v)$ at $\{\xi=0\}$ and $\{\tau=0\}$, respectively. 
\end{enumerate}
\end{remarks}

The Schr\"odinger equation from \eqref{25012311} is apparently ``quasi-homogeneous'' in the $(t,x)$-derivatives, 
or in the $(\tau,\xi)$ variables.
In fact, according to the following inclusion relations, we should choose $\theta=2$. 
\begin{proposition}[\cite{MR461592}]\label{25012918}
Suppose Assumption~\ref{250124}, and let $u\in\mathcal D'(\mathbb R^{1+d})$ be a solution to \eqref{25012311} 
given by \eqref{25012917}. 
Then one has 
\[
\mathop{\mathrm{qh\text{-}WF}}\nolimits^\theta (u)
\subset
\begin{cases} 
\{(t,x,\tau,0);\ \tau\neq 0\}
& \text{if }\theta\in (0,2),\\ 
\bigl\{\bigl(t,x,-\tfrac12a_{ij}(t,x)\xi_i\xi_j,\xi\bigr);\ \xi\neq 0\bigr\}&\text{if }\theta=2,\\ 
\{(t,x,0,\xi);\ \xi\neq 0\}
&\text{if }\theta\in (2,\infty).
\end{cases}
\]
\end{proposition}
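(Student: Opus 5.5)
This is a microlocal elliptic-regularity argument for the operator $P=p_t+H$, which is quasi-homogeneous. We have $Pu=0$ in $\mathcal D'(\mathbb R^{1+d})$ by Theorem~\ref{260124}. Since the definition of $\mathop{\mathrm{qh\text{-}WF}}\nolimits^\theta$ is local, we first localize. Fix a point $(s,y,\sigma,\eta)$ outside the claimed set and choose $\chi\in C^\infty_{\mathrm c}(\mathbb R^{1+d})$ with $\chi=1$ near $(s,y)$. Then $v=\chi u$ lies in some weighted Sobolev space $H^{-N}$, and $Pv=[P,\chi]u=: f$, where $f$ vanishes near $(s,y)$. Localization in $(t,x)$ commutes with the Weyl quantization modulo $\mathcal O(h^\infty)$ errors away from the support. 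So it suffices to find $a$ with $a(s,y,\sigma,\eta)\ne0$ and $\|a^{\mathrm W}(t,x,h^\theta p_t,hp_x)v\|=\mathcal O(h^\infty)$.

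We rescale $P$ semiclassically. Since $h^{\theta}p_t$ and $hp_x$ are the natural variables, write $p_t=h^{-\theta}(h^\theta p_t)$ and $H=h^{-2}\bigl(\tfrac12(hp_i)a_{ij}(hp_j)+h^2V\bigr)$. Then
\[
h^{\min(\theta,2)}P=q_h^{\mathrm W}(t,x,h^\theta p_t,hp_x)+\mathcal O(h)\ \text{(in symbol classes, locally in $(t,x)$)},
\]
with principal symbol $q$ in the three cases:
\begin{itemize}
\item if $\theta<2$, then $q=\tfrac12 a_{ij}\xi_i\xi_j$ after multiplying by $h^{2}$ and discarding $h^{2-\theta}\tau$;
\item if $\theta=2$, then $q=\tau+\tfrac12 a_{ij}\xi_i\xi_j$;
\item if $\theta>2$, then $q=\tau$ after multiplying by $h^\theta$.
\end{itemize}
The subprincipal terms, including $V$ (smooth on the compact support, so its growth is irrelevant) and the $\partial a$ terms from symmetrization, carry extra powers of $h$. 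We check that $q(s,y,\sigma,\eta)\ne0$ exactly when $(s,y,\sigma,\eta)$ lies outside the claimed set:
\begin{itemize}
\item for $\theta<2$, $\eta\ne0$ gives $q>0$ by positive definiteness;
\item for $\theta=2$, the point is off the characteristic variety;
\item for $\theta>2$, we need $\sigma\ne0$.
\end{itemize}

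Next we run a standard semiclassical parametrix. Choose $b\in C^\infty_{\mathrm c}$ supported near $(s,y,\sigma,\eta)$, where $q\ne0$, and with support in $(t,x)$ inside the set $\{\chi=1\}$ away from $\operatorname{supp} f$. Construct iteratively symbols $e_h\sim\sum h^k e_k$, compactly supported in the same neighbourhood, with $e_0=b/q$, such that
\[
e_h^{\mathrm W}\circ\bigl(h^{\min(\theta,2)}P\bigr)=b^{\mathrm W}+\mathcal O(h^\infty)
\]
in operator norm on the relevant weighted spaces. The remainder is controlled by the Weyl calculus with anisotropic scaling; composition is fine because both scalings are $\le1$ powers of $h$ and the symbols are compactly supported in phase space. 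Apply this to $v$. The term $e_h^{\mathrm W}f$ is $\mathcal O(h^\infty)$ because the $(t,x)$-supports are disjoint, and Weyl operators with compactly supported symbols are pseudolocal with $\mathcal O(h^\infty)$ off-diagonal kernels. The error applied to $v\in H^{-N}$ is $\mathcal O(h^\infty)$ as well, since a compactly supported semiclassical symbol maps $H^{-N}$ to $L^2$ with norm $\mathcal O(h^{-N'})$.

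The main obstacle is that $\sigma\ne0$ is not excluded in the case $\theta<2$, and $\eta\ne0$ is not excluded in the case $\theta>2$. Both issues are handled by the symbol analysis, with one caveat. For $\theta<2$ at points with $\eta=0$, the claim only requires nothing, since those points are in the allowed set. For $\theta>2$ at points with $\sigma=0$, they are again allowed. So ellipticity is only needed where $q\ne0$ as above.

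The second delicate point is the uniformity of the calculus for the mixed scaling $(h^\theta p_t,hp_x)$. We treat it by noting that it is the Weyl quantization of $a(t,x,h^\theta\tau,h\xi)$. Composition formulas then hold with remainders gaining $h^{\min(\theta,1)}$ per order. For $\theta<1$ the gain in the $t$-direction is only $h^\theta$, but finitely many extra iterations still yield $\mathcal O(h^\infty)$.
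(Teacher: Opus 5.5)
Your proposal is correct and is essentially the paper's argument: the paper only says to define microlocal ellipticity for the anisotropic quantization $a^{\mathrm W}(t,x,h^\theta p_t,hp_x)$ and run the standard parametrix construction, deferring the details to Lascar, and that is what you do case by case with the right principal symbols $\tfrac12 a_{ij}\xi_i\xi_j$, $\tau+\tfrac12 a_{ij}\xi_i\xi_j$ and $\tau$. One slip to fix: the normalizing factor should be $h^{\max(\theta,2)}P$, not $h^{\min(\theta,2)}P$, as your own case list shows (multiply by $h^2$ for $\theta<2$ and by $h^\theta$ for $\theta>2$).
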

\begin{remark}
We can define $\mathop{\mathrm{qh\text{-}WF}}\nolimits^\theta (u)$ 
for $u$ from \eqref{25012917} 
even if $u\notin \mathcal S'(\mathbb R^{1+d})$. 
In fact, we can modify $u$ to be in $\mathcal S'(\mathbb R^{1+d})$ without changing the values on any compact time-interval
by smoothly deforming $H$ to $K$ outside a larger time-interval. 
\end{remark}

The proof of Proposition~\ref{25012918} is straightforward; 
Given $\theta>0$, define an appropriate \textit{microlocal ellipticity} for the quantization 
$a^{\mathrm W}(t,x,h^\theta p_t,hp_x)$,
and repeat the standard parametrix construction. 
For the details see Lascar's paper~\cite{MR461592}.

\subsection{Classical high-energy scattering data}

To state the first main theorem, we need the classical high-energy scattering data. 
By the short-range nature of our perturbations, 
we can at last remove the time-dependence and the potential 
from the classical mechanics corresponding to \eqref{25020420}. 
As a result, we may replace the high-energy limit by the large time limit given below. 
We postpone such reduction procedure to Section~\ref{2601251432},
and here only formulate the scattering data in the reduced setting.

Let us fix $s\in\mathbb R$, and consider a time-independent classical Hamiltonian  
\[
H_s(x,\xi)=\tfrac12a_{ij}(s,x)\xi_i\xi_j,\quad (x,\xi)\in\mathbb R^{2d}. 
\]
Denote by 
\begin{equation}
(x(t),\xi(t))=(x(t;s,y,\eta),\xi(t;s,y,\eta))
\label{250620}
\end{equation}
a solution to the associated Hamilton equations
\begin{equation}
\dot x_i=a_{ij}(s,x)\xi_j,\ \  
\dot\xi_i=-\tfrac12(\partial_ia_{jk}(s,x))\xi_j\xi_k
\quad \text{for }i=1,\dots,d
\label{26030522}
\end{equation}
with initial data $(x(s),\xi(s))=(y,\eta)$. 

\begin{definition}\label{25080316}
A point $(s,y,\eta)\in \mathbb R^{1+2d}$ is said to be \textit{forward}/\textit{backward non-trapping} if  
\[
\lim_{t\to\pm\infty} |x(t;s,y,\eta)|=\infty
,
\]
respectively. 
In addition, the sets of all forward/backward non-trapping points are denoted by $\Omega_\pm\subset \mathbb R^{1+2d}$, respectively. 
\end{definition}

\begin{remark}
By definition it is clear that $\Omega_\pm\subset \mathbb R^{1+d}\times(\mathbb R^d\setminus\{0\})$.
\end{remark}

We quote the following result from Nakamura~\cite{MR2488342} without proof. 

\begin{proposition}[\cite{MR2488342}]\label{250123}
Suppose Assumption~\ref{250124}.
Then $\Omega_\pm\subset \mathbb R^{1+2d}$ are open, and there exist the limits 
\begin{align*}
x_\pm&=x_\pm(s,y,\eta):=\lim_{t\to\pm\infty}\bigl(x(t;s,y,\eta)- (t-s)\xi(t;s,y,\eta)\bigr),
\\
\xi_\pm&=\xi_\pm(s,y,\eta):=\lim_{t\to\pm \infty}\xi(t;s,y,\eta)
\end{align*}
locally uniformly in $(s,y,\eta)\in \Omega_\pm$, respectively. 
\end{proposition}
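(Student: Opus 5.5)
We prove Proposition~\ref{250123} for the forward case only; the backward case follows by reversing time. Throughout, $s$ enters the Hamiltonian $H_s$ only as a parameter, and Assumption~\ref{250124} gives bounds uniform in $s$. The plan is the classical short-range scattering argument. We first show that $|x(t)|$ grows linearly once $t$ is large. Then we integrate the equations \eqref{26030522} with decay rates $\langle x\rangle^{-2-\epsilon}$ for $\dot\xi$ and $\langle x\rangle^{-1-\epsilon}$ for $\dot x-\xi$, which gives convergence of $\xi(t)$ and of $x(t)-(t-s)\xi(t)$. Openness and local uniformity come from continuous dependence on initial data, combined with a stability statement for the outgoing region.

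\textbf{Energy and boundedness of $\xi$.} Since $H_s$ is conserved, uniform positivity of $a$ for large $|x|$ gives $|\xi(t)|^2\le C H_s(y,\eta)$ when $|x(t)|\ge R$. On a compact region in $x$, $a$ is positive definite, so $\xi$ is bounded there too. Hence $|\xi(t)|$ is bounded along the orbit and the flow is complete.

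\textbf{Outgoing region.} Fix large $R$ and small $\delta>0$, depending on a bound $E$ on the energy. Define
\[
\Gamma=\bigl\{(x,\xi):\ |x|>R,\ x\cdot\xi>(1-\delta)|x||\xi|,\ |\xi|^2\in[E^{-1},E]\bigr\}.
\]
Compute
\[
\tfrac{\mathrm d^2}{\mathrm dt^2}\tfrac12|x|^2=|\xi|^2+\mathcal O\bigl(\langle x\rangle^{-1-\epsilon}|\xi|^2\bigr).
\]
From this I will show, by a bootstrap/continuity argument, that $\Gamma$ is forward invariant and that orbits in it satisfy $|x(t)|\ge c(t-t_0)+|x(t_0)|$. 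This step is the \emph{main obstacle}: the angular condition must be kept, which needs the estimate
\[
\tfrac{\mathrm d}{\mathrm dt}\tfrac{x\cdot\xi}{|x||\xi|}\ge -C\langle x\rangle^{-1-\epsilon},
\]
and this is integrable along linearly growing orbits. So large $R$ closes the bootstrap. I would adapt the standard argument (as in Nakamura) and use the virial quantity $x\cdot\xi$ directly.

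\textbf{Non-trapping implies entry into $\Gamma$.} If $|x(t)|\to\infty$, then since $\xi$ is bounded, $\tfrac{\mathrm d}{\mathrm dt}(x\cdot\xi)\ge c|\xi|^2>0$ eventually. The energy gives $|\xi|^2\ge c>0$ at large $|x|$. Thus $x\cdot\xi\sim ct$ while $|x|\le Ct$, and a convexity argument on $|x|^2$ yields that the orbit eventually enters $\Gamma$.

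\textbf{Openness of $\Omega_+$.} The set $\Gamma$ is open, and the time-$T$ flow map is continuous in $(s,y,\eta)$; continuity in $s$ follows from smooth dependence of $a$ on $s$. So if $(s,y,\eta)\in\Omega_+$ reaches $\Gamma$ at time $T$, nearby points also reach $\Gamma$ at time $T$. By forward invariance they escape, hence lie in $\Omega_+$.

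\textbf{Limits.} On $\Gamma$ we have $|\dot\xi|\le C\langle x\rangle^{-2-\epsilon}\le C\langle t\rangle^{-2-\epsilon}$. Hence $\xi(t)\to\xi_+$, with tail
\[
|\xi(t)-\xi_+|\le C\langle t\rangle^{-1-\epsilon}.
\]
Next,
\[
\tfrac{\mathrm d}{\mathrm dt}\bigl(x-(t-s)\xi\bigr)=(a-\delta)\xi-(t-s)\dot\xi=\mathcal O\bigl(\langle t\rangle^{-1-\epsilon}\bigr),
\]
which is integrable, so $x_+$ exists. All constants depend only on $R,\delta,E$ and the entry time $T$, and these are uniform on a neighbourhood of the point. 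Therefore the convergence is locally uniform on $\Omega_+$.
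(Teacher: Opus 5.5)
Your argument is correct. Note that the paper gives no proof of this proposition; it is quoted from Nakamura. The same mechanism is, however, carried out in the paper for the rescaled flow: escape in Lemma~\ref{250804}, and integration of the $\langle\cdot\rangle^{-2-\epsilon}$ and $\langle\cdot\rangle^{-1-\epsilon}$ tails in Lemma~\ref{2603082236}. Your final step matches the latter.

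The genuine difference is how you obtain linear escape. You propagate an outgoing cone $\Gamma$ and treat preservation of the angle $\omega=x\cdot\xi/(|x||\xi|)$ as the main obstacle. The paper avoids angles altogether. Energy conservation and the Mourre-type bound $\tfrac{\mathrm d^2}{\mathrm dt^2}|x|^2\ge c_0>0$ on $\{|x|\ge M\}$ make the open set $\{|x|>M,\ x\cdot a(s,x)\xi>0\}$ exactly forward invariant. The reason is that $\tfrac{\mathrm d}{\mathrm dt}|x|^2$ keeps growing as long as $|x|$ is increasing, so it never returns to zero, and one gets $|x(t)|^2\ge M^2+\tfrac12c_0(t-t_0)^2$ at once.

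Entry into this set is trivial for a non-trapped orbit: once $|x(t)|>M$ for all large $t$, $|x|^2\to\infty$ forces $\tfrac{\mathrm d}{\mathrm dt}|x|^2>0$ at some such $t_0$. So your asymptotic argument that $\omega\to1$ is not needed, and openness follows exactly as you describe with this set in place of $\Gamma$. The cone gives extra directional control, which this statement does not require.

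If you keep the cone, fix two details:
\begin{itemize}
\item As written, $\Gamma$ is not open, because of the closed interval $[E^{-1},E]$.
\item Your bound $\dot\omega\ge -C\langle x\rangle^{-1-\epsilon}$ only gives nested invariance (start in $\Gamma(R,\delta,E)$, stay in $\Gamma(R,2\delta,2E)$). That suffices, but it should be stated as such. Alternatively, use the exact identity $\dot\omega=\tfrac{|\xi|}{|x|}(1-\omega^2)+\mathcal O(\langle x\rangle^{-2-\epsilon}|\xi|)$ and define the shell through the conserved $H_s$ instead of $|\xi|^2$; this gives genuine invariance once $R$ is large relative to $\delta$.
\end{itemize}
Also, your aperture $\delta$ clashes with the identity matrix $\delta$ in $(a-\delta)\xi$.
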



\subsection{The first main result}

Now, we present the first main theorem of the paper. 

\begin{theorem}\label{250208}
Suppose Assumption~\ref{250124}. For any $\phi\in\mathcal S'(\mathbb R^d)$ 
let $u\in\mathcal D'(\mathbb R^{1+d})$ and $u_K\in \mathcal S'(\mathbb R^{1+d})$ be from 
\eqref{25012917} and \eqref{250204}, respectively. 
In addition, for any $(s,y,\eta)\in \Omega_\pm$ with $\pm s<0$ 
let $(x_\pm,\xi_\pm)$ be from Proposition~\ref{250123}, respectively. 
Then one has 
\begin{align*}
&\bigl(s,y, -\tfrac12a_{ij}(s,y)\eta_i\eta_j,\eta\bigr)\in \mathop{\mathrm{qh\text{-}WF}}\nolimits^2(u)
\\&\text{if and only if}\ \ 
\bigl(s,x_\pm,-\tfrac12\xi_\pm^2,\xi_\pm\bigr)\in\mathop{\mathrm{qh\text{-}WF}}\nolimits^2(u_K),
\end{align*}
respectively. 
\end{theorem}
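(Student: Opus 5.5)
We follow Nakamura's scattering-theoretic strategy from \cite{MR2488342}, adapted to spacetime. We treat only the $+$ case; the $-$ case follows by time reversal. The idea is to relate $u$ and $u_K$ through the wave-type operator $W(t)=\mathrm e^{\mathrm itK}U(t)$. For this it suffices to transport a quasi-homogeneous pseudodifferential cutoff through the interaction.

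\textbf{Reduction and set-up.} First, using Proposition~\ref{25012918} and a change of variables, we reduce to symbols of the form $a(t,x,h^2p_t,hp_x)=\chi(t)\,b(x,hp_x)\,\psi(h^2p_t+\tfrac12 h^2p_x^2)$ localized near $(s,y,\eta)$, since both wave front sets live on the characteristic variety. The problem then becomes a semiclassical statement with small parameter $h$, where time is an honest variable but the dependence on $p_t$ is only through ellipticity.

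The key identity is $u=\mathrm e^{-\mathrm itK}W(t)\phi$ with $W(t)=\mathrm e^{\mathrm itK}U(t)$. We write $\mathrm e^{-\mathrm itK}$ back and use \eqref{25020716}. Then $b^{\mathrm W}(x,hp_x)$ acting at time $t$ on $u_K$ is conjugated to $b^{\mathrm W}(x+thp_x\cdot h^{-1}\ldots)$. More precisely, we rescale time as $t=s+h\sigma$-free: high energy $|\xi|\sim h^{-1}$ over a bounded time interval corresponds to classical time of order $h^{-1}$. This is exactly why the scattering limits $x_+,\xi_+$ of Proposition~\ref{250123} appear.

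\textbf{Step 1: Egorov for the interacting propagator.} We construct, for $t$ near $s$, an operator $B_h(t)$ with symbol supported near the classical flow. The flow is taken for the semiclassical Hamiltonian, rescaled so that $\tfrac12 a_{ij}(t,x)(h\xi)_i(h\xi)_j+h^2V$ generates, in the $h\to0$ limit, the frozen flow \eqref{26030522} at time $s$. This reduction is postponed to Section~\ref{2601251432} and uses Assumption~\ref{250124}: time variation and $V$ are lower order at high energy. The operator $B_h(t)$ satisfies a Heisenberg equation
\[
\tfrac{\mathrm d}{\mathrm dt}\bigl(U(t)^*B_h(t)U(t)\bigr)=\mathcal O(h^\infty)
\]
up to the time where the orbit exits a large ball. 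We then use forward non-trapping and the short-range bounds to continue the construction up to $t\to+\infty$ in the rescaled sense. This produces a symbol asymptotic to $b$ pulled back by the map $(y,\eta)\mapsto(x_+,\xi_+)$ composed with the free flow. The symbol estimates along the whole flow follow from $\langle x\rangle^{-1-\epsilon}$ decay, as in Nakamura's proof. Here one must keep $\chi(t)$ and $\psi(h^2p_t+\cdots)$ intact: the $p_t$-factor commutes with $\partial_t+\mathrm iH$ modulo lower order on the characteristic set, so spacetime localization passes through.

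\textbf{Step 2: Comparison.} Combining Step 1 with the free Egorov identity \eqref{25020716}, we obtain
\[
a^{\mathrm W}(t,x,h^2p_t,hp_x)u=\tilde a^{\mathrm W}(t,x,h^2p_t,hp_x)u_K+\mathcal O(h^\infty),
\]
where $\tilde a$ is supported near $(s,x_+,-\tfrac12\xi_+^2,\xi_+)$ and is nonvanishing there iff $a$ is at $(s,y,\ldots,\eta)$. The sign condition $s<0$ enters because the initial time is $0$: the interaction between times $0$ and $s$ lies on the backward side relative to $+\infty$, so comparing with the asymptotic free motion in the $+$ direction makes sense. Both directions of the equivalence follow by symmetry, using inverse maps and openness of $\Omega_+$.

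\textbf{Main obstacle.} The hardest part is Step 1. Unlike in \cite{MR2488342}, $t$ cannot serve as the deformation parameter, because it is itself quantized. We would try first to introduce an auxiliary parameter $r\in[0,\infty)$ and flow the spacetime symbol along $\partial_t+\{H,\cdot\}$ in the extended phase space. The goal is to prove uniform symbol bounds for the time-dependent flow via the reduction to the frozen Hamiltonian $H_s$, with error $\mathcal O(h^{\epsilon})$ per step, summable thanks to the $\epsilon$-decay.
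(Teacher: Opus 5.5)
There is a genuine gap, and it sits exactly at the point you label the main obstacle; your proposal does not overcome it. Step~1 is a Heisenberg equation in the time variable, $\tfrac{\mathrm d}{\mathrm dt}\bigl(U(t)^*B_h(t)U(t)\bigr)=\mathcal O(h^\infty)$. That is Nakamura's spatial argument with $t$ as the deformation parameter. But for $\mathop{\mathrm{qh\text{-}WF}}^2$ the variable $t$ is itself localized (through $\chi(t)$ and $h^2p_t$) and integrated in $L^2_{t,x}$. Such an equation therefore gives no control of $\|a^{\mathrm W}(t,x,h^2p_t,hp_x)u\|_{L^2_{t,x}}$.

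The repair you sketch, an auxiliary parameter $r$ with transport along $\partial_t+\{H,\cdot\}$ in extended phase space, is the bicharacteristic flow of $p_t+H$. That is Lascar's propagation for $u$ alone. On the quasi-homogeneous scale ($r=\mathcal O(h)$, $|\xi|\sim h^{-1}$) it freezes $t$ and moves along the flow of $H_s$. For $r\sim1$ it sends $x$ to $|x|\sim h^{-1}$, outside every compact set. Since $p_t+H$ annihilates $u$ but not $u_K$, nothing in this flow ever brings in $\mathrm e^{-\mathrm itK}$. It therefore cannot connect $\mathop{\mathrm{qh\text{-}WF}}^2(u)$ with $\mathop{\mathrm{qh\text{-}WF}}^2(u_K)$, nor produce $(x_\pm,\xi_\pm)$.

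The missing idea is a deformation parameter distinct from $t$ that interpolates between $u$ and $u_K$ fiberwise in $t$. The paper uses
\[
\kappa\mapsto\mathrm e^{-\mathrm i\kappa tK}U((1-\kappa)t)\phi,\qquad \kappa\in[0,1],
\]
which equals $u$ at $\kappa=0$ and $u_K$ at $\kappa=1$.
\begin{itemize}
\item Its generator $L(\kappa)=-t\,\mathrm e^{-\mathrm i\kappa tK}(H-K)((1-\kappa)t)\,\mathrm e^{\mathrm i\kappa tK}$ commutes with multiplication by $t$ and is made explicit by \eqref{25020716}; see \eqref{2603014}.
\item One solves $\mathbf D_L\widetilde B(\kappa)=\mathcal O(h^\infty)$ in the $(t,x,h^2p_t,hp_x)$-quantization.
\item The classical input is that the rescaled flow $\Phi_h(\kappa)$ of $l_0$ has uniform bounds in $S_{2(1+d)}$ and tends, as $\kappa/h\to\infty$, to $(s,x_\pm,\dots,\xi_\pm)$.
\item After the substitution $\mu=(1-\kappa)s$, this flow is the interacting flow from time $s$ to $(1-\kappa)s$ at energy $h^{-1}$. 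That is precisely why $\pm s<0$ is needed.
\end{itemize}
Your $W(t)=\mathrm e^{\mathrm itK}U(t)$ is the right object, but you deform it in $t$ rather than along such a $\kappa$.

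Two smaller points. First, the display in Step~2 cannot hold as written, since its two sides are microlocalized near different points. What one actually obtains is $\|a^{\mathrm W}u\|=\|\widetilde a^{\mathrm W}u_K\|+\mathcal O(h^\infty)$, via the fiberwise unitary $U(t)\mathrm e^{\mathrm itK}$. Alternatively one has the quadratic-form identity $\langle u,\widetilde B(0)u\rangle=\langle u_K,\widetilde B(1)u_K\rangle+\mathcal O(h^\infty)$. The paper gets the converse direction from this identity by an iterative gain of powers of $h$, because $\widetilde b(1)$ equals $b\circ\Phi_h(1)^{-1}$ only modulo $\mathcal O(h)$. Second, your reduction uses the free characteristic set $\tau=-\tfrac12\xi^2$ for $u$, whereas the characteristic set of $u$ is $\tau=-\tfrac12a_{ij}(t,x)\xi_i\xi_j$.
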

\begin{remarks}
\begin{enumerate}
\item
By Proposition~\ref{25012918}, for $\mathop{\mathrm{qh\text{-}WF}}\nolimits^2(u)$, 
it suffices to consider the points of the form 
$(s,y, -\tfrac12a_{ij}(s,y)\eta_i\eta_j,\eta)$. 
\item 
We may understand $\mathop{\mathrm{qh\text{-}WF}}\nolimits^2(u_K)$ as written in terms of $\phi$,
since the free solution $u_K$ has an explicit representation involving $\phi$. 
Thus Theorem~\ref{250208} compares (phase space) singularities of $u$ of different times. 
See Lascar~\cite{MR461592}, and also Szeftel~\cite{MR2059146}, 
for the results comparing singularities of the same time. 
\end{enumerate}
\end{remarks}

The proof of Theorem~\ref{250208} will be given in Section~\ref{26012514}. 
Our basic strategy is directly inspired by Nakamura~\cite{MR2488342}, 
which discussed spatial singularities of time-slices of $u$. 
To see microlocal correspondence between $u$ and $u_K$, 
we shall introduce appropriate deformation, 
and keep track of support of a symbol defining the quasi-homogeneous wave front set. 
Then the symbol is required to satisfy a certain Heisenberg equation, 
and we are to solve it by asymptotic construction. Thus the proof reduces to analysis of the associated classical flow. 
Note that, unlike Nakamura~\cite{MR2488342}, we cannot simply take $t$ as a deformation parameter 
since pseudodifferential operators act on it, and it is also subject to integrations. 
In addition, the analysis of the classical mechanics gets more demanding 
due to the time-dependence of perturbation.

\subsection{Homogeneous wave front set}

Before the second main theorem of the paper, here we recall another variant of the wave front set, 
the \textit{homogeneous wave front set} introduced by Nakamura~\cite{MR2115261}. 

\begin{definition}
Define the \textit{homogeneous wave front set} of $\phi\in \mathcal S'(\mathbb R^d)$:  
\[
\mathop{\mathrm{HWF}}\nolimits(\phi)\subset \mathbb R^{2d}\setminus\{0\}
\]
as a complement of the set of all $(y,\eta)\in \mathbb R^{2d}\setminus\{0\}$ such that 
there exists $a\in C^\infty_{\mathrm c}(\mathbb R^{2d})$ 
satisfying 
\[a(y,\eta)\neq 0,\qquad 
\|a^{\mathrm W}(hx,hp_x)\phi\|_{L^2_x}=\mathcal O(h^\infty)\quad \text{as }h\to +0
.
\]

\end{definition}

\begin{remarks}
\begin{enumerate}
\item
Obviously, the homogeneous wave front set simultaneously measures singularity and growth at infinity of a function on $\mathbb R^d$. 
The second author~\cite{MR2273972} and Schulz--Wahlberg~\cite{MR3645728} 
proved that it is essentially equivalent to the 
\textit{quadratic scattering wave front set} due to Wunsch~\cite{MR1687567} 
and to the \textit{global} (\textit{Gabor}) \textit{wave front set} due to H\"ormander~\cite{MR1178557},
respectively.
\item
The quasi-homogeneous and the homogeneous wave front sets 
have the adjective ``homogeneous'' in different ways. 
The former refers to the homogeneity within the Fourier variables,
while the latter to that in the configuration and the Fourier variables. 
\end{enumerate}
\end{remarks}


\subsection{The second main result}

Finally, we present the second main theorem and a corollary, 
which in the one-dimensional case provide a necessary and sufficient condition. 

\begin{theorem}\label{250208b}
For any $\phi\in \mathcal S'(\mathbb R^d)$ let $u_K\in \mathcal S'(\mathbb R^{1+d})$ be from \eqref{250204}. 
Then for any $(s,y,\eta)\in \mathbb R^{1+d}\times(\mathbb R^d\setminus\{0\})$
\[
\bigl(s,y,-\tfrac12\eta^2,\eta\bigr)\in\mathop{\mathrm{qh\text{-}WF}}\nolimits^2(u_K)
\ \ \text{implies}\ \  
(-s\eta,\eta)\in \mathop{\mathrm{HWF}}(\phi)
.
\]
Moreover, the converse is true if $d=1$. 
\end{theorem}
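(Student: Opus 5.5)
We prove the contrapositive of the first assertion: assuming $(-s\eta,\eta)\notin\mathop{\mathrm{HWF}}(\phi)$, we show $(s,y,-\tfrac12\eta^2,\eta)\notin\mathop{\mathrm{qh\text{-}WF}}^2(u_K)$. The tool is the exact Egorov identity \eqref{25020716}. For a symbol $b\in C^\infty_{\mathrm c}(\mathbb R^{2d})$ we have $\mathrm e^{\mathrm itK}b^{\mathrm W}(hx,hp_x)\mathrm e^{-\mathrm itK}=b^{\mathrm W}(hx+thp_x,hp_x)$. Now take $h$ small and let $t$ range over a small neighbourhood of $s$. Then the phase space points $(x,hp_x)$ near $(y,\eta)$ correspond at time $0$ to the points $(h(x-t p_x),hp_x)$. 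These lie near $(h y - t\eta,\eta)$, that is, near $(-s\eta,\eta)$ up to errors of size $h|y|+|t-s||\eta|$.

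Concretely, choose $b$ with $b=1$ near $(-s\eta,\eta)$ and with $\|b^{\mathrm W}(hx,hp_x)\phi\|=\mathcal O(h^\infty)$, which is possible by the hypothesis. Then take $a(t,x,\tau,\xi)=\chi(t)\psi(x)\rho(\tau)\omega(\xi)$ with small supports around $(s,y,-\tfrac12\eta^2,\eta)$. On $u_K$ the operator $h^2p_t$ acts as $h^2K=\tfrac12(hp_x)^2$. Modulo $\mathcal O(h^\infty)$ errors coming from symbolic calculus in the $t$-variable, we may therefore replace the $\tau$-cutoff by $\rho(\tfrac12 (hp_x)^2)$, which is a function of $hp_x$ alone. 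The $t$-localization $\chi(t)$ is harmless because it is a multiplication; the $t$-frequency part is handled by the ellipticity in Proposition~\ref{25012918}. It then suffices to bound $\sup_{t\in\mathrm{supp}\chi}\|\psi(x)\omega(hp_x)\mathrm e^{-\mathrm itK}\phi\|$. Conjugating by $\mathrm e^{\mathrm itK}$ turns this into the operator $\psi(x+tp_x)\omega(hp_x)$ acting on $\phi$. Here $\psi(x+tp_x)$ is not semiclassical in $h x$, so we write $x+tp_x=h^{-1}(hx+t\,hp_x)$. On $\mathrm{supp}\,\omega$ the symbol $\psi(h^{-1}(\cdot))$ localizes $hx+thp_x$ to an $h$-neighbourhood of $hy$, i.e. to a neighbourhood of $0$. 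Hence $hx\approx -t\,hp_x\approx -s\eta$. Thus the whole operator is microlocalized where $b=1$, and a standard composition argument (a $b$-parametrix: $\psi\omega=\psi\omega b^{\mathrm W}+\mathcal O(h^\infty)$) gives the $\mathcal O(h^\infty)$ bound. The delicate point here is that $\psi(x+tp_x)$ is a symbol of mixed scale: it is $h$-localized in the $hx+thp_x$ direction. It still belongs to a symbol class with good calculus, because $h\cdot h^{-1}=1$ and the $(hx,hp_x)$-derivatives of $\psi(h^{-1}(hx+thp_x))$ lose $h^{-1}$ only in one combined direction, paired against $\omega(hp_x)$, which is smooth. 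I expect this exotic-class bookkeeping to be manageable because only the Poisson bracket with $b$ matters and $b$ is constant near the relevant set.

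For the converse when $d=1$, assume $(s,y,-\tfrac12\eta^2,\eta)\notin \mathop{\mathrm{qh\text{-}WF}}^2(u_K)$ for \emph{every} $y$ (here $\eta\ne0$ is fixed). The aim is to show $(-s\eta,\eta)\notin\mathop{\mathrm{HWF}}(\phi)$. The difficulty is dimensional: an $h$-neighbourhood of $(-s\eta,\eta)$ in $\mathbb R^{2}$ must be recovered from spacetime data. We reverse the computation above. Let $B=b^{\mathrm W}(hx,hp_x)$ with $b$ supported near $(-s\eta,\eta)$. Then $\mathrm e^{-\mathrm itK}B\phi$ is microlocalized near $x\approx h^{-1}(-s\eta)+t\eta/h$, which is the spatial region $x\in h^{-1}(t-s)\eta+\mathcal O(h^{-1}\delta)$. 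For $t$ in an $h$-window around $s$ this region is compact; in one dimension, varying $t$ sweeps $x$ along the line, and this is exactly what lets the spacetime data cover it. We use a partition of unity adapted to the free flow, $1=\sum_k\psi(x-k)$ transported by $x\mapsto x-tp_x$. Each piece corresponds to a qh-WF point $(s+hk/\eta,\ \cdot\ ,-\tfrac12\eta^2,\eta)$ with bounded spatial component. Integrating $\|\,\cdot\,\|^2$ over $t$ in a fixed interval near $s$ then recovers $\|B\phi\|^2$ times a factor of size $h$ (the time window), because $\mathrm e^{-\mathrm itK}$ is unitary. The main obstacle is uniformity. The qh-WF hypothesis gives $\mathcal O(h^\infty)$ bounds only on compact sets of base points, while the rescaling places the relevant base points at $x\sim h^{-1}$. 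We must therefore use the time translation: for $t$ near $s$ the spatial support of $\mathrm e^{-\mathrm itK}B\phi$ is centered at $(t-s)\eta/h$, so only $|t-s|\lesssim h$ contributes from a compact $x$-window. The needed $t$-resolution $h$ is exactly what $h^2p_t$ (quasi-homogeneous of order $2$, with $\tau\sim\eta^2$) can resolve only up to $\mathcal O(1)$ in $t$. This last scale-matching, which uses $d=1$ in an essential way, is the step I expect to be hardest.
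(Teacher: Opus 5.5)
Your argument for the first assertion is fine and is in substance the paper's. The paper converts $(-s\eta,\eta)\notin\mathop{\mathrm{HWF}}(\phi)$ into the condition \eqref{26021817} on $u_K$ via Lemma~\ref{260107} and observes that this operator has wider microsupport than the one in \eqref{26021816}; you instead conjugate the spacetime test operator back to $t=0$. Both rest on \eqref{25020716}. Two small points. On $u_K$ one has $h^2p_t=-\tfrac12(hp_x)^2$, not $+\tfrac12(hp_x)^2$. The mixed-scale symbol is harmless: in unscaled variables $\psi(x+t\xi)\omega(h\xi)$ has bounded derivatives, while $b(hx,h\xi)$ gains $h$ per derivative, so the Weyl composition gains $h$ at each order.

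The converse, however, has a genuine gap. First, you assume absence of $\mathop{\mathrm{qh\text{-}WF}}^2(u_K)$ at $(s,y,-\tfrac12\eta^2,\eta)$ for every $y$. The theorem asserts that absence at a single point already gives $(-s\eta,\eta)\notin\mathop{\mathrm{HWF}}(\phi)$, and together with the first part this $y$-independence is exactly the extra content for $d=1$; your version would prove only a weaker statement. Second, the mechanism you sketch does not work as stated. For $|t-s|\lesssim h$ the state $\mathrm e^{-\mathrm itK}B\phi$ is not localized in a compact $x$-window: $b$ has width $\delta$ in $hx$, so the state is spread over $|x-h^{-1}(t-s)\eta|\lesssim\delta/h$. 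A partition $\sum_k\psi(x-k)$ with scalar time shifts proportional to $h/\eta$ also fails, because the unscaled velocity $\xi'$ varies by $\sim\delta/h$ across $\mathrm{supp}\,b$. A piece centred at $x_k$ with $|x_k-y|\sim\delta/h$, shifted via \eqref{25020716} by the time $h(x_k-y)/\eta$, ends up centred at $y+(x_k-y)(1-h\xi'/\eta)$, which can be $\sim\delta^2/h$ away from $y$. So the transported symbols are neither supported in a fixed neighbourhood of $(s,y,\eta)$ nor uniformly bounded, and the hypothesis cannot be applied uniformly over the $\mathcal O(h^{-1})$ pieces.

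The missing idea is the flow-adapted partition of unity of Lemma~\ref{260219}.
\begin{itemize}
\item One localizes not in $x$ but in the co-moving variable $(x-y)\xi/|\xi|-\epsilon m2^{-n}|\xi|$, together with a dyadic localization $|\xi|\in[2^{n-2},2^{n+1}]$ with $2^n\sim h^{-1}$. The pieces are renormalized so that they sum exactly to $1$.
\item The scalar shift time $\epsilon m2^{-n}$ is thereby matched to the dyadic frequency scale, and the exact Egorov identity cancels the $m$-dependence, see \eqref{26021916}.
\item Each transported symbol $b_{m,n}$ is localized in $\bigl|(x-y)\xi/|\xi|\bigr|\le2\epsilon$, which for $d=1$ means $|x-y|\le 2\epsilon$; this is where $d=1$ enters. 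Its time variable is shifted by at most $|\epsilon m2^{-n}|\lesssim\delta$.
\item $b_{m,n}$ obeys $S_{1+2d}$ bounds uniform in $(m,n,h)$, even though the untransported pieces $a_{m,n}$ do not.
\end{itemize}
The $\tau$-free criterion of Lemma~\ref{260107} at the single point $(s,y,\eta)$ then gives $\mathcal O(h^N)$ for every piece uniformly. The triangle inequality over the $\mathcal O(h^{-1})$ pieces costs only one power of $h$. In particular, no resolution of $t$ on the scale $h$ is needed, so the scale matching you flag as hardest is not the real obstacle. The real obstacle is keeping the transported pieces in a fixed symbol class with support in a fixed neighbourhood of $(s,y,\eta)$, and your proposal does not address it.
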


\begin{remarks}
\begin{enumerate}
\item
Similarly to Theorem~\ref{250208}, for $\mathop{\mathrm{qh\text{-}WF}}\nolimits^2(u_K)$ 
it suffices to discuss the points of the form $(s,y,-\tfrac12\eta^2,\eta)$ by Proposition~\ref{25012918}. 
\item
See Szeftel~\cite[Corollaire 4.2]{MR2059146} for a result similar to the former assertion,
and also Nakamura~\cite{MR2115261} for its spatial version. 
Note that the latter assertion is not covered by Szeftel~\cite{MR2059146}, 
and is never true for spatial singularities. 
\end{enumerate}
\end{remarks}

The proof of Theorem~\ref{250208b} will be given in Section~\ref{26012420}. 
Even though $u_K$ has an explicit integral expression in terms of $\phi$, 
it is not so straightforward as it seems  
since $u_K$ and $\phi$ live in different dimensional spaces. 
In particular, for the converse part we have to construct a special partition of unity
that conforms with the free classical flow on the phase space. 
There the identity \eqref{25020716} is very useful, and we will repeatedly use it.

By combining the results so far, in the one-dimensional space, 
we can characterize the quasi-homogeneous wave front set of $u$ by one of time-slices $U(r)\phi$. 

\begin{corollary}\label{25020815}
Under the settings of Theorem~\ref{250208} with $d=1$, let $\pm r>\pm s$, respectively. 
Then one has 
\begin{align*}
&\bigl(s,y, -\tfrac12a_{ij}(s,y)\eta_i\eta_j,\eta\bigr)\in \mathop{\mathrm{qh\text{-}WF}}\nolimits^2(u)
\\&
\text{if and only if}\ \ 
((r-s)\xi_\pm,\xi_\pm)\in \mathop{\mathrm{HWF}}\nolimits(U(r)\phi), 
\end{align*}
respectively.

\end{corollary}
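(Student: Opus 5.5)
The plan is to reduce the corollary to Theorems~\ref{250208} and \ref{250208b} by shifting the initial time from $0$ to $r$. Put $\psi=U(r)\phi\in\mathcal S'(\mathbb R)$, and define the time-translated coefficients $a'(t,x)=a(t+r,x)$ and $V'(t,x)=V(t+r,x)$. These again satisfy Assumption~\ref{250124}, since the bounds there are uniform in $t$. Let $U'(t,s)=U(t+r,s+r)$. By the uniqueness in Theorem~\ref{260124}, this is the propagator for $H'=\tfrac12p\,a'(t,x)\,p+V'(t,x)$. Hence
\[
u'(t,x):=(U'(t,0)\psi)(x)=(U(t+r,r)U(r)\phi)(x)=u(t+r,x),
\]
so $u'$ is the solution of \eqref{25012311} for $H'$ with initial state $\psi$. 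Its free counterpart is $u'_K(t,x)=(\mathrm e^{-\mathrm itK}\psi)(x)$.

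Next I will check how each object transforms under the shift.

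\emph{Wave front set.} Translation in $t$ is a unitary operator $T_r$ on $L^2_{t,x}$. Conjugating $a^{\mathrm W}(t,x,h^2p_t,hp_x)$ by $T_r$ gives the Weyl quantization of the translated symbol $a(t+r,x,\tau,\xi)$. Therefore
\[
(s,y,\tau,\eta)\in\mathop{\mathrm{qh\text{-}WF}}\nolimits^2(u)\iff(s-r,y,\tau,\eta)\in\mathop{\mathrm{qh\text{-}WF}}\nolimits^2(u').
\]
Here $u$ and $u'$ are understood after the cutoff modification of the remark following Proposition~\ref{25012918}; this is harmless because the wave front set is local in $t$.

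\emph{Classical flow.} The frozen Hamiltonian of $H'$ at time $s-r$ is $H'_{s-r}=H_s$. Consequently the trajectories satisfy $x'(t;s-r,y,\eta)=x(t+r;s,y,\eta)$, and similarly for $\xi'$. It follows that
\[
x'(t)-(t-(s-r))\xi'(t)=x(t+r)-(t+r-s)\xi(t+r).
\]
Thus $(s-r,y,\eta)\in\Omega'_\pm$ if and only if $(s,y,\eta)\in\Omega_\pm$, and the scattering data coincide: $x'_\pm=x_\pm$ and $\xi'_\pm=\xi_\pm$.

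\emph{Sign condition.} The hypothesis $\pm r>\pm s$ is exactly $\pm(s-r)<0$, which is the condition required in Theorem~\ref{250208}.

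Now apply Theorem~\ref{250208} to $u'$ at the point $(s-r,y,\eta)$. This gives
\[
(s,y,-\tfrac12a(s,y)\eta^2,\eta)\in\mathop{\mathrm{qh\text{-}WF}}\nolimits^2(u)
\iff
(s-r,x_\pm,-\tfrac12\xi_\pm^2,\xi_\pm)\in\mathop{\mathrm{qh\text{-}WF}}\nolimits^2(u'_K).
\]
Theorem~\ref{250208b} with $d=1$ will be applied to the free solution with initial state $\psi$ at the base point $(s-r,x_\pm)$ with frequency $\xi_\pm$. Its two directions together give the equivalence with $(-(s-r)\xi_\pm,\xi_\pm)\in\mathop{\mathrm{HWF}}(\psi)$. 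Since $-(s-r)\xi_\pm=(r-s)\xi_\pm$ and $\psi=U(r)\phi$, this is the stated condition.

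The one hypothesis of Theorem~\ref{250208b} that still needs checking is $\xi_\pm\neq0$. I get it from conservation of $H_s$ along the flow: the relation $\tfrac12a(s,x(t))\xi(t)^2=\tfrac12a(s,y)\eta^2>0$ holds since $\eta\neq0$. Because $|x(t)|\to\infty$ on $\Omega_\pm$, we have $a(s,x(t))\to1$, and so $\tfrac12\xi_\pm^2=\tfrac12a(s,y)\eta^2>0$.

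I do not expect a serious obstacle; the only point requiring care is the bookkeeping of the time shift. Specifically, it must be checked that the propagator identity, the definition of $(x_\pm,\xi_\pm)$ through the factor $(t-s)$, and the translation covariance of $\mathop{\mathrm{qh\text{-}WF}}\nolimits^2$ are all mutually compatible, which is what the computations above verify.
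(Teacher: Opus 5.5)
Your proposal is correct and follows the paper's route. The paper only says the corollary is straightforward from Theorems~\ref{250208} and \ref{250208b}; your argument supplies the implicit step of re-basing the initial time at $r$, namely time-translating $a,V$, checking $x'_\pm=x_\pm$ and $\xi'_\pm=\xi_\pm$, and noting that $\pm(s-r)<0$ is exactly the hypothesis $\pm r>\pm s$. You also include the verification $\xi_\pm\neq0$ via conservation of $H_s$, which is needed to invoke Theorem~\ref{250208b}.
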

\begin{proof}
The assertion is straightforward from Theorems~\ref{250208} and \ref{250208b}. 
\end{proof}

\section{Classical mechanics in high-energy regime}\label{2601251432}

In this section we study the classical mechanics that will be needed in Section~\ref{26012514}. 
We introduce a certain technical classical Hamiltonian corresponding 
to an operator appearing in Section~\ref{26012514}, 
and investigate the high-energy limit of its Hamiltonian flow. 
In spite of its complicated appearance,  after appropriate changes of variables, 
it reduces to typical classical Hamiltonians, and the standard methods work well. 

In Section~\ref{26030320} we state the main proposition of the section. 
Section~\ref{26030315} is devoted to preliminaries for the proof,
and finally in Section~\ref{260303} we implement the proof.

\subsection{Classical Hamiltonian flow}\label{26030320}

Here we present the settings and the main result of the section. 
We consider the classical Hamiltonian 
\begin{align}
\begin{split}
l_0(\kappa,t,x,\tau,\xi)&=
-\tfrac12 t \bigl\{a_{ij}((1-\kappa)t,x-\kappa t\xi)-\delta_{ij}\bigr\}\xi_i\xi_j
\\&\phantom{{}={}}{}
-tV((1-\kappa)t,x-\kappa t\xi),
\end{split}
\label{250619}
\end{align}
which is formally a principal symbol of a technical operator \eqref{2603014} from Section~\ref{26012514}. 
For motivation see the arguments there. 
We note that for our purpose we may drop the last term on the right-hand side of \eqref{250619},
but we have decided to just keep it. 
Let us study the high-energy limit of the associated Hamiltonian flow. 
To be more precise, consider the associated Hamilton equations 
\begin{align}
\tfrac{\mathrm d}{\mathrm d\kappa}t&=0
,
\label{25042320}
\\ 
\begin{split}
\tfrac{\mathrm d}{\mathrm d\kappa} x_i&
=
-t \bigl\{a_{ij}((1-\kappa)t,x-\kappa t\xi)-\delta_{ij}\bigr\}\xi_j
\\&\quad{}
+\tfrac12 \kappa t^2 (\partial_ia_{jk})((1-\kappa)t,x-\kappa t\xi)\xi_j\xi_k
+\kappa t^2(\partial_iV)((1-\kappa)t,x-\kappa t\xi)
,
\end{split}
\label{25042321}
\\
\begin{split}
\tfrac{\mathrm d}{\mathrm d\kappa}\tau&
=
\tfrac12  \bigl\{a_{ij}((1-\kappa)t,x-\kappa t\xi)-\delta_{ij}\bigr\}\xi_i\xi_j
\\&\quad{}
+\tfrac12 (1-\kappa)t(\partial_ta_{ij})((1-\kappa)t,x-\kappa t\xi)\xi_i\xi_j
\\&\quad{}
-\tfrac12 \kappa t\xi_k(\partial_ka_{ij})((1-\kappa)t,x-\kappa t\xi)\xi_i\xi_j
+V((1-\kappa)t,x-\kappa t\xi)
\\&\quad{}
+(1-\kappa)t(\partial_tV)((1-\kappa)t,x-\kappa t\xi)
-\kappa t\xi_i(\partial_iV)((1-\kappa)t,x-\kappa t\xi)
,
\end{split}
\label{25042322}
\\ 
\tfrac{\mathrm d}{\mathrm d\kappa}\xi_i
&=
\tfrac12 t (\partial_ia_{jk})((1-\kappa)t,x-\kappa t\xi)\xi_j\xi_k
+t(\partial_iV)((1-\kappa)t,x-\kappa t\xi)
\label{25042323}
\end{align}
with initial data 
\begin{equation}
(t(0),x(0),\tau(0),\xi(0))=(s,y,\sigma,\eta). 
\label{25042324}
\end{equation}
Throughout the paper we restrict parameter $\kappa$ to an interval $[0,1]$. 
We then denote the maximally defined Hamiltonian flow generated by \eqref{25042320}--\eqref{25042324} 
by 
\[
\Phi\colon \mathcal U\to \mathbb R^{2(1+d)}
\] 
with $\mathcal U\subset [0,1]\times \mathbb R^{2(1+d)}$ being an open subset
containing $\{0\}\times\mathbb R^{2(1+d)}$. 
In addition, for any $h\in (0,1]$ we introduce a scaling transformation 
\[
\Theta_h\colon \mathbb R^{2(1+d)}\to\mathbb R^{2(1+d)},\ \ (t,x,\tau,\xi)\mapsto (t,x,h^2\tau,h\xi)
,
\]
and set 
\begin{equation}
\mathcal U_h=\Theta_h(\mathcal U)
,\quad 
\Phi_h=\Theta_h\circ\Phi\circ\Theta_{1/h}
\colon \mathcal U_h\to \mathbb R^{2(1+d)}
,
\label{2603015}
\end{equation}
where we have identified $\Theta_h$ with $\mathop{\mathrm{id}}_{[0,1]}\times \Theta_h$. 
Now we state the main proposition of the section. 
Recall notation from Definition~\ref{25080316}.

\begin{proposition}\label{2603011619}
\begin{enumerate}
\item
For any $\kappa\in[0,1]$ and $h\in(0,1]$ let  
\begin{align*}
\Phi_h(\kappa)
&=\Phi_h(\kappa;{}\cdot{},{}\cdot{},{}\cdot{},{}\cdot{})
,\\
\mathcal U_h(\kappa)
&=\bigl\{(s,y,\sigma,\eta)\in \mathbb R^{2(1+d)};\ (\kappa,s,y,\sigma,\eta)\in \mathcal U_h\bigr\}
.
\end{align*}
Then $\Phi_h(\kappa)$ is a diffeomorphism from $\mathcal U_h(\kappa)$ to its image. 
\item 
For any $(s_0,y_0,\eta_0)\in \Omega_\pm$ with $\pm s_0<0$ there exist
$h_0\in (0,1]$ and  
neighborhoods $U_\pm\subset \mathbb R^{2(1+d)}$ of $(s_0,y_0,-\tfrac12a_{ij}(s_0,y_0)\eta_{0,i}\eta_{0,j},\eta_0)$ 
such that for any $h\in (0,h_0]$ 
\begin{equation}
[0,1]\times U_\pm\subset \mathcal U_h
,
\label{260305}
\end{equation}
and that uniformly in $(s,y,\sigma,\eta)\in U_\pm$
\begin{align}
\begin{split}
&\lim_{\kappa/h\to\infty}\Phi_h(\kappa,s,y,\sigma,\eta)
\\&
=
\bigl(s,x_\pm(s,y,\eta),\sigma+\tfrac12a_{ij}(s,y)\eta_i\eta_j-\tfrac12\xi_\pm(s,y,\eta)^2,\xi_\pm(s,y,\eta)\bigr)
,
\end{split}
\label{2603082229}
\end{align}
respectively. 
\item
Under the setting of the assertion 2, furthermore, for any $\alpha\in \mathbb N_0^{2(1+d)}$ 
there exists $C>0$ such that uniformly in $h\in (0,h_0]$ and 
$(\kappa,s,y,\sigma,\eta)\in [0,1]\times U_\pm$
\[
\bigl|\partial_{s,y,\sigma,\eta}^\alpha \Phi_h(\kappa,s,y,\sigma,\eta)\bigr|
\le C
.
\]
\end{enumerate}
\end{proposition}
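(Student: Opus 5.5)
The plan is to undo the shear $x\mapsto x-\kappa t\xi$ built into \eqref{250619}, so that the flow $\Phi$ becomes the ordinary time-dependent Hamilton flow of $\tfrac12a_{ij}(r,x)\xi_i\xi_j+V(r,x)$. After the scaling $\Theta_h$ this flow is a small perturbation of the frozen flow \eqref{26030522} of $H_s$, run for the long time $-\kappa s/h$. Proposition~\ref{250123} then supplies the limits.

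\textbf{Step 1: change of variables (assertion 1).} Along a solution of \eqref{25042320}--\eqref{25042324} the component $t\equiv s$ is constant. Set
\[
r=(1-\kappa)s,\qquad z=x-\kappa s\xi .
\]
The terms $\kappa t^2(\partial a)\xi\xi$ and $\kappa t^2\partial V$ in \eqref{25042321} equal $\kappa t\,\tfrac{\mathrm d}{\mathrm d\kappa}\xi$, so subtracting $\tfrac{\mathrm d}{\mathrm d\kappa}(\kappa s\xi)$ gives
\[
\tfrac{\mathrm d z}{\mathrm d\kappa}=-s\,a(r,z)\xi,\qquad
\tfrac{\mathrm d\xi}{\mathrm d\kappa}=s\bigl(\tfrac12\partial a(r,z)\xi\xi+\partial V(r,z)\bigr).
\]
Since $\mathrm dr=-s\,\mathrm d\kappa$, the pair $(z,\xi)$, parametrized by $r$, is exactly the time-dependent Hamilton flow of $H(r,z,\xi)$ started at $(y,\eta)$ at time $r=s$. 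The $\tau$-equation \eqref{25042322} is then solved by a quadrature in $\kappa$.

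Assertion 1 is now immediate. $\Phi(\kappa)$ is the time-$\kappa$ map of a smooth non-autonomous Hamiltonian system, hence a diffeomorphism onto its image; equivalently it is a composition of the shear, a Hamiltonian flow map, and a quadrature. Conjugating by $\Theta_h$ preserves this.

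\textbf{Step 2: rescaling and the limit (assertion 2).} Under $\Theta_h$ the data are $(s,y,\sigma/h^2,\eta/h)$. Put $\zeta=h\xi$ and $\rho=(r-s)/h=-\kappa s/h$. Then
\[
\tfrac{\mathrm dz}{\mathrm d\rho}=a(s+h\rho,z)\zeta,\qquad
\tfrac{\mathrm d\zeta}{\mathrm d\rho}=-\tfrac12\partial a(s+h\rho,z)\zeta\zeta-h^2\partial V(s+h\rho,z),
\]
with $(z,\zeta)(0)=(y,\eta)$. As $\kappa$ runs over $[0,1]$, $\rho$ runs from $0$ to $-s/h$, which tends to $\pm\infty$ exactly when $\pm s<0$; this is how the sign condition in assertion 2 enters. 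The physical variable is recovered as $x=z+\kappa s\zeta/h=z-\rho\zeta$. Hence the $x$- and $\xi$-components of \eqref{2603082229} are the statement that $z(\rho)-\rho\zeta(\rho)\to x_\pm$ and $\zeta(\rho)\to\xi_\pm$ as $\rho\to\pm\infty$, uniformly in $h\to0$.

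For the $\tau$-component I would look for an explicit almost-conserved quantity relating $h^2\tau$, $\tfrac12\zeta^2$ and the energy $\tfrac12a_{ij}(s+h\rho,z)\zeta_i\zeta_j$. The guide is the required limit $\sigma+\tfrac12a_{ij}(s,y)\eta_i\eta_j-\tfrac12\xi_\pm^2$. I would verify the candidate by differentiating along the flow; its $\rho$-derivative should consist only of terms carrying $\partial_t a$, $h^2V$, $h^2\partial V$ or $(a-\delta)$, all integrable along the trajectory.

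The core estimate is a perturbative non-trapping argument:
\begin{itemize}
\item Since $(s_0,y_0,\eta_0)\in\Omega_\pm$, Proposition~\ref{250123} and openness give $|z(\rho)|\ge c|\rho|-C$ for $\pm\rho\ge T$ for the frozen $h=0$ flow, uniformly on a small neighborhood.
\item On $[0,T]$, Gronwall shows the $h$-flow stays $\mathcal O(h)$-close to the frozen flow.
\item For $\pm\rho\ge T$, a bootstrap using Assumption~\ref{250124} keeps $|z|\gtrsim|\rho|$. The key point is that the forcing is $\mathcal O(\langle\rho\rangle^{-2-\epsilon})$ from the metric, and $h^2\langle\rho\rangle^{-\epsilon}\lesssim\langle\rho\rangle^{-2-\epsilon}$ on $|\rho|\le C/h$ from the sublinear potential.
\end{itemize}
This bootstrap gives global existence, hence \eqref{260305}, together with Cauchy-type convergence of $(z-\rho\zeta,\zeta)$ as $\rho\to\pm\infty$. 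It also gives the closeness of the $h$-limits to the $h=0$ limits $(x_\pm,\xi_\pm)$, since $\sup_{|\rho|\le C/h}$ of the time-dependence error $h\rho\,\partial_ta$ is again integrable against the decay.

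\textbf{Step 3: derivative bounds (assertion 3).} Differentiate the system in the initial data. The linearized equations have coefficients bounded by $\langle\rho\rangle^{-2-\epsilon}$ times $|\rho|$-weights in the $z$-direction. Inductively in $|\alpha|$, a Gronwall argument for $(\partial z-\rho\,\partial\zeta,\partial\zeta)$ gives uniform bounds, and transforming back through $x=z-\rho\zeta$ and the $\tau$-quadrature gives assertion 3.

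\textbf{Main obstacle.} I expect the main difficulty to be the bootstrap in Step 2, i.e.\ the uniformity in $h$ of the non-trapping lower bound on the whole range $|\rho|\le|s|/h$. Over this range the time argument $s+h\rho$ moves from $s$ all the way to $0$, so the frozen-time comparison is not small in $L^\infty$. It is only small after integration against the spatial decay $\langle z\rangle^{-1-\epsilon}$, and it is exactly the short-range assumption that makes this work.
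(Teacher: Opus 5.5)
Your proposal is correct and follows essentially the paper's route. It uses the same shear-undoing change of variables (cf.\ \eqref{2603082230}--\eqref{2603082233}) to reduce $\Phi_h$ to the rescaled time-dependent flow of $\tfrac12a_{ij}\zeta_i\zeta_j+h^2V$, compares with the frozen flow of $H_s$ via short-time continuity plus an integrable tail, and bounds derivatives by a Gronwall induction. The main difference is that the paper gets the uniform escape $|z|\gtrsim|\rho|$ from a kinetic-energy bound and the classical Mourre-type convexity of $z^2$ (Lemma~\ref{250804}) rather than from your bootstrap. Your bootstrap also gives \eqref{260305}, whose proof the paper omits.

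For the $\tau$-component you need not settle for an almost-conserved quantity. A direct computation shows that $h^2\tau+(1-\kappa)\bigl\{\tfrac12a_{ij}(s+h\rho,z)\zeta_i\zeta_j+h^2V(s+h\rho,z)\bigr\}+\tfrac{\kappa}{2}\zeta^2$ is exactly conserved; this is \eqref{2603082232}--\eqref{2603082234}. The $\tau$-limit then follows at once from $a\to\delta$, $\zeta\to\xi_\pm$ and $h^2V(s+h\rho,z)=\mathcal O(h^{1+\epsilon})$.
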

\begin{remark}\label{26030822}
We can prove the inclusion relation \eqref{260305} by using the basic unique existence theorem 
for a solution to ODE combined with the following two facts, cf.\ the proof of Lemma~\ref{250804}: 
As $h\to+0$, $\Phi_h$ approaches the flow given by \eqref{250620} for any $\kappa$ of order $\mathcal O(h)$; 
Outside a large compact subset outgoing/incoming orbits almost conserve the energies. 
The arguments are elementary but a bit involved, and we would like to omit them. 
Below we let $\Phi_h$ exist where we are considering. 
\end{remark}

\subsection{Reduction to ordinary Hamiltonians}\label{26030315}

\subsubsection{Time-dependent Hamiltonian}

Our interest is in the slightly more general flow $\Phi_h$ from \eqref{2603015} rather than $\Phi$ itself. 
Here we deduce rescaled equations for $\Phi_h$, 
and investigate their properties, instead of the original ones \eqref{25042320}--\eqref{25042323}. 
First, it is straightforward to see that the components of $\Phi_h$ satisfy the Hamilton equations 
\begin{align}
\tfrac{\mathrm d}{\mathrm d\kappa}t&=0
,
\label{25042320b}
\\ 
\begin{split}
\tfrac{\mathrm d}{\mathrm d\kappa} x_i&
=
-h^{-1}t \bigl\{a_{ij}((1-\kappa)t,x-\kappa th^{-1}\xi)-\delta_{ij}\bigr\}\xi_j
\\&\quad{}
+\tfrac12 h^{-2}\kappa t^2 (\partial_ia_{jk})((1-\kappa)t,x-\kappa th^{-1}\xi)\xi_j\xi_k
\\&\quad{}
+\kappa t^2(\partial_iV)((1-\kappa)t,x-\kappa th^{-1}\xi)
,
\end{split}
\label{25042321b}
\\
\begin{split}
\tfrac{\mathrm d}{\mathrm d\kappa}\tau&
=
\tfrac12  \bigl\{a_{ij}((1-\kappa)t,x-\kappa th^{-1}\xi)-\delta_{ij}\bigr\}\xi_i\xi_j
\\&\quad{}
+\tfrac12(1-\kappa)t(\partial_ta_{ij})((1-\kappa)t,x-\kappa th^{-1}\xi)\xi_i\xi_j
\\&\quad{}
-\tfrac12h^{-1}\kappa t\xi_k(\partial_ka_{ij})((1-\kappa)t,x-\kappa th^{-1}\xi)\xi_i\xi_j
\\&\quad{}
+h^2V((1-\kappa)t,x-\kappa th^{-1}\xi)
\\&\quad{}
+h^2(1-\kappa)t(\partial_tV)((1-\kappa)t,x-\kappa th^{-1}\xi)
\\&\quad{}
-h\kappa t\xi_i(\partial_iV)((1-\kappa)t,x-\kappa th^{-1}\xi)
,
\end{split}
\label{25042322b}
\\ 
\begin{split}
\tfrac{\mathrm d}{\mathrm d\kappa}\xi_i
&=
\tfrac12 h^{-1}t (\partial_ia_{jk})((1-\kappa)t,x-\kappa th^{-1}\xi)\xi_j\xi_k
\\&\quad{}
+ht(\partial_iV)((1-\kappa)t,x-\kappa th^{-1}\xi)
\end{split}
\label{25042323b}
\end{align}
with initial data 
\begin{equation}
(t(0),x(0),\tau(0),\xi(0))=(s,y,\sigma,\eta). 
\label{25042324b}
\end{equation}
These rescaled equations \eqref{25042320b}--\eqref{25042323b} seem quite complicated too, 
but we can rewrite them into a simpler form by changes of variables.
In fact, by \eqref{25042320b} and \eqref{25042324b} it follows that 
\begin{equation}
t\equiv s.
\label{2603082230}
\end{equation}
Substitute it to \eqref{25042321b}--\eqref{25042323b}, 
change the independent variable $\kappa$ to $\mu=(1-\kappa)s$, 
and then the dependent variables $x,\tau,\xi$ to  
\begin{align}
z&=x-h^{-1}(s-\mu)\xi,
\label{2603082231}
\\
\begin{split}
\rho&=\tau
+\tfrac{\mu}s\bigl\{
\tfrac12 a_{ij}(\mu,x-h^{-1}(s-\mu)\xi)\xi_i\xi_j+h^2V(\mu,x-h^{-1}(s-\mu)\xi)\bigr\}
\\&\phantom{{}={}}{}
+\tfrac{s-\mu}{s}\tfrac12\xi^2
,
\end{split}
\label{2603082232}
\\
\zeta&=\xi,
\label{2603082233}
\end{align}
respectively. Thus we obtain 
\begin{align}
\tfrac{\mathrm d }{\mathrm d\mu}z_i
&=
h^{-1}a_{ij}(\mu,z)\zeta_j
,
\label{25061917}
\\
\tfrac{\mathrm d }{\mathrm d\mu}\rho
&=
0
,
\label{25061919}
\\
\tfrac{\mathrm d}{\mathrm d\mu}\zeta_i
&=
-\tfrac12 h^{-1}(\partial_ia_{jk})(\mu,z)\zeta_j\zeta_k
-h(\partial_iV)(\mu,z)
\label{25061918}
\end{align}
with initial data 
\begin{equation}
(z(s),\rho(s),\zeta(s))
=\bigl(y,\sigma+\tfrac12 a_{ij}(s,y)\eta_i\eta_j+h^2V(s,y),\eta\bigr).
\label{2506191919}
\end{equation}
Again, the equation \eqref{25061919} for $\rho$ with \eqref{2506191919} is trivially solved as 
\begin{equation}
\rho\equiv \sigma+\tfrac12 a_{ij}(s,y)\eta_i\eta_j+h^2V(s,y) . 
\label{2603082234}
\end{equation}
Thus it suffices to investigate the equations \eqref{25061917} and \eqref{25061918} for $(z,\zeta)$
with \eqref{2506191919}.
They are exactly the Hamilton equations for the time-dependent Hamiltonian
\[
H(\mu,z,\zeta)=\tfrac12 h^{-1} a_{ij}(\mu,z)\zeta_i\zeta_j+hV(\mu,z)
.
\]
Let us denote a solution to \eqref{25061917}, \eqref{25061918} and \eqref{2506191919} by 
\[
(z(\mu),\zeta(\mu))=(z(\mu;s,y,\eta),\zeta(\mu;s,y,\eta))
.
\]
The \textit{classical Mourre-type estimate} is essential in the following estimates.

\begin{lemma}\label{250804}
For any $(s,y,\eta)\in \Omega_\pm$ with $\pm s<0$ there exist $h_0\in (0,1]$ and $C>0$ such that 
for any $h\in (0,h_0]$, $(s,y,\eta)\in U_\pm$ and $\pm \mu\in [\pm s,0]$ 
\[
\bigl||z(\mu;s,y,\eta)|
\mp h^{-1}(\mu-s)(a_{ij}(s,y)\eta_{i}\eta_{j})^{1/2}
\bigr|
\le C
,
\]
respectively. Moreover, $C>0$ and $h_0\in (0,1]$ can be chosen locally uniformly in $(s,y,\eta)\in \Omega_\pm$.
\end{lemma}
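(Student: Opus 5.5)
The plan is to rescale the evolution parameter so that the flow of \eqref{25061917}--\eqref{25061918} becomes an $h$-small perturbation of the frozen flow \eqref{26030522}. I treat the forward case ($s<0$, $\mu\in[s,0]$); the backward case is symmetric. Set $\lambda=h^{-1}(\mu-s)\in[0,|s|/h]$. Then
\[
\tfrac{\mathrm d}{\mathrm d\lambda}z_i=a_{ij}(s+h\lambda,z)\zeta_j,\qquad
\tfrac{\mathrm d}{\mathrm d\lambda}\zeta_i=-\tfrac12(\partial_ia_{jk})(s+h\lambda,z)\zeta_j\zeta_k-h^2(\partial_iV)(s+h\lambda,z),
\]
with data $(y,\eta)$ at $\lambda=0$. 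The claim becomes $\bigl||z|-\lambda(a_{ij}(s,y)\eta_i\eta_j)^{1/2}\bigr|\le C$ uniformly for $\lambda\le|s|/h$. The coefficients differ from the frozen ones by $O(h\lambda)$ in the time slot and by $O(h^2)$ in the potential term.

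\textbf{Step 1 (compact-time comparison).} Since $(s,y,\eta)\in\Omega_+$, fix a large $R$ and a small $\delta>0$. Choose $T$ so that the frozen orbit $(x(s+T;s,y,\eta),\xi(s+T;s,y,\eta))$ from \eqref{250620} satisfies $|x|\ge 2R$ and $x\cdot\xi\ge(1-\delta)|x||\xi|$. Such $T$ exists because Proposition~\ref{250123} shows the orbit is asymptotically free. By Gronwall on $[0,T]$, $(z,\zeta)(\lambda)$ stays $O(h)$-close to the frozen orbit. Hence at $\lambda=T$ the rescaled orbit is in the outgoing region $\{|z|\ge R,\ \hat z\cdot\hat\zeta\ge 1-2\delta\}$. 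Its energy $E=\tfrac12a_{ij}\zeta_i\zeta_j$ equals $E_0=\tfrac12a_{ij}(s,y)\eta_i\eta_j$ up to $O(h)$. All of this is locally uniform in $(s,y,\eta)$ by continuous dependence.

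\textbf{Step 2 (bootstrap in the outgoing region).} On $[T,|s|/h]$ I take the following bootstrap hypothesis:
\[
|z(\lambda)|\ge R+c(\lambda-T),\qquad |\zeta(\lambda)|\le C_0 .
\]
The energy derivative is
\[
\tfrac{\mathrm d}{\mathrm d\lambda}E=\tfrac12h(\partial_ta_{ij})\zeta_i\zeta_j-h^2(\partial_iV)a_{ij}\zeta_j=O\bigl(h\langle z\rangle^{-1-\epsilon}\bigr)+O\bigl(h^2\langle z\rangle^{-\epsilon}\bigr).
\]
Under the hypothesis this integrates to $O(h)$ over the whole interval, the second term contributing $h^2(|s|/h)=O(h)$. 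For the classical Mourre-type estimate I use
\[
\tfrac{\mathrm d^2}{\mathrm d\lambda^2}\tfrac12|z|^2=\tfrac{\mathrm d}{\mathrm d\lambda}(z\cdot a\zeta)\ge 2E-CR^{-\epsilon}-Ch^2\langle z\rangle^{1-\epsilon},
\]
where the last term is again controlled because $|z|\lesssim\lambda\le |s|/h$. Together with $z\cdot\zeta>0$ at $\lambda=T$, this gives $|z|\ge R+c(\lambda-T)$ with some $c>0$ depending only on $E_0$. This closes the bootstrap. I expect this to be the main obstacle: keeping the constants uniform in $h$ over the long interval $\lambda\sim h^{-1}$, where the time-dependence of $a$ and the sublinear $V$ act. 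The saving is the extra factors $h$ and $h^2$ in front of those terms.

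\textbf{Step 3 (asymptotics of $z$).} With linear growth of $|z|$ established, $\tfrac{\mathrm d}{\mathrm d\lambda}\zeta=O(\langle\lambda\rangle^{-2-\epsilon})+O(h^2\langle\lambda\rangle^{-\epsilon})$. Let $\zeta_*=\zeta(|s|/h)$. Then
\[
|\zeta(\lambda)-\zeta_*|\le C\langle\lambda\rangle^{-1-\epsilon}+Ch^{1+\epsilon}.
\]
Also $\tfrac{\mathrm d}{\mathrm d\lambda}z=\zeta_*+O(\langle z\rangle^{-1-\epsilon})+O(|\zeta-\zeta_*|)$. Integrating over $[0,|s|/h]$ gives $|z(\lambda)-\lambda\zeta_*|\le C$, since $h^{1+\epsilon}\cdot h^{-1}$ is bounded. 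Hence $\bigl||z|-\lambda|\zeta_*|\bigr|\le C$.

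Finally, by the energy estimate, $|\zeta_*|^2=2E(|s|/h)+O(\langle|s|/h\rangle^{-1-\epsilon})=a_{ij}(s,y)\eta_i\eta_j+O(h)$, using $a-\delta=O(\langle z\rangle^{-1-\epsilon})$ at the endpoint. So $\lambda\bigl||\zeta_*|-(a_{ij}(s,y)\eta_i\eta_j)^{1/2}\bigr|\le(|s|/h)\,O(h)=O(1)$, which is the claim. All constants come from Step 1 and the bootstrap, so they are locally uniform in $(s,y,\eta)\in\Omega_+$.
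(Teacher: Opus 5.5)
Your proof is correct, but it breaks the circularity between energy control and linear growth of $|z|$ differently from the paper. The paper never bootstraps. In the variable $\mu$ one has $\frac{\mathrm d}{\mathrm d\mu}(a_{ij}\zeta_i\zeta_j)=(\partial_ta_{ij})\zeta_i\zeta_j-2ha_{ij}(\partial_iV)\zeta_j$, with no factor $h^{-1}$. So Gr\"onwall on the compact interval $[s,0]$ gives, unconditionally, $c_1\le a_{ij}(\mu,z)\zeta_i\zeta_j\le C_2$ and $|z|\le C_3h^{-1}(\mu-s)+|y|$. In your variable this is the remark that $\frac{\mathrm d}{\mathrm d\lambda}E=O(h)E+O(h^2)$ over a $\lambda$-interval of length $|s|/h$, with no decay of $z$ needed. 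Hence the Mourre inequality $\frac{\mathrm d^2}{\mathrm d\mu^2}z^2\ge h^{-2}(c_2-C_4\langle z\rangle^{-1-\epsilon})$ holds along the entire orbit. Linear growth then follows from the same short-time comparison with the frozen orbit as your Step 1, plus a convexity argument showing that $|z|$ is non-decreasing once $|z|\ge M$ and $\frac{\mathrm d}{\mathrm d\mu}z^2>0$. Only after that is the energy refined to $a_{ij}(s,y)\eta_i\eta_j+O(h)$. Finally $\frac{\mathrm d^2}{\mathrm d\mu^2}z^2$ is integrated twice to bound $\bigl|z^2-h^{-2}(\mu-s)^2a_{ij}(s,y)\eta_i\eta_j\bigr|$ by $Ch^{-1}(\mu-s)+C$, which gives the modulus estimate.

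Your bootstrap does close, but as written the hypothesis and the conclusion are the same inequality, and a continuity argument needs a strict improvement. You do have one: $|z(T)|\ge 2R-O(h)$ and $|z|^2\ge|z(T)|^2+E_0(\lambda-T)^2$ give $|z|\ge\sqrt2R-O(h)+\sqrt{E_0/2}\,(\lambda-T)$, which beats $R+c(\lambda-T)$ once $c<\sqrt{E_0/2}$. This should be said explicitly. Your hypothesis $|\zeta|\le C_0$ is in fact superfluous, by the Gr\"onwall remark above.

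What your route buys is Step 3. Extracting $\zeta_*$ and proving $|z(\lambda)-y-\lambda\zeta_*|\le C$ gives vector asymptotics of the orbit. That is more than the lemma asks, and closer in spirit to what the paper proves next in Lemma~\ref{2603082236}. The paper's $z^2$-argument is shorter but only ever controls $|z|$.
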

\begin{proof}
We discuss only the upper sign, since the lower one can be treated similarly. 
We also note that all the following estimates are locally uniform in $(s,y,\eta)\in \Omega_\pm$, 
so that the last assertion is automatically proved without explicitly mentioned. 

\smallskip
\noindent
\textit{Step 1.}\ 
We first deduce a rough kinetic energy estimate. 
Let us differentiate 
\begin{align}
\begin{split}
\tfrac{\mathrm d}{\mathrm d\mu}\bigl(a_{ij}(\mu,z)\zeta_{i}\zeta_{j}\bigr)
&
=
(\partial_ta_{ij})(\mu,z)\zeta_{i}\zeta_{j}
+h^{-1}  (\partial_ka_{ij})(\mu,z)a_{kl}(\mu,z)\zeta_{l}\zeta_{i}\zeta_{j}
\\&\phantom{{}={}}{}
+2a_{ij}(\mu,z)\bigl\{-\tfrac12 h^{-1}(\partial_ia_{kl}(\mu,z))\zeta_{k}\zeta_{l}
-h(\partial_iV)(\mu,z)\bigr\}\zeta_{j}
\\&
=
(\partial_ta_{ij})(\mu,z)\zeta_{i}\zeta_{j}
-2ha_{ij}(\mu,z)(\partial_iV)(\mu,z)\zeta_{j}
.
\end{split}
\label{25062016}
\end{align}
By the Cauchy--Schwarz inequality this implies 
\[
\bigl|\tfrac{\mathrm d}{\mathrm d\mu}\bigl(a_{ij}(\mu,z)\zeta_{i}\zeta_{j}\bigr)\bigr|
\le 
C_1a_{ij}(\mu,z)\zeta_{i}\zeta_{j}
+C_1h^2,
\]
so that uniformly in $\mu\in [s,0]$
\[
\mathrm e^{-C_1(\mu-s)}a_{ij}(s,y)\eta_{i}\eta_{j}
-h^2
\le 
a_{ij}(\mu,z)\zeta_{i}\zeta_{j}
\le  
\mathrm e^{C_1(\mu-s)}a_{ij}(s,y)\eta_{i}\eta_{j}
+h^2
.
\]
Hence by letting $h_0\in (0,1]$ be small enough 
it follows that for any $h\in (0,h_0]$ and $\mu\in [s,0]$
\begin{equation}
0<c_1\le a_{ij}(\mu,z)\zeta_{i}\zeta_{j} \le C_2<\infty.
\label{25061920}
\end{equation}
We remark that due to \eqref{25061917} and \eqref{25061920} we in particular have 
for any $h\in(0,h_0]$ and $\mu\in [s,0]$
\begin{equation}
|z(\mu;s,y,\eta)|\le C_3h^{-1}(\mu-s)+|y|
\label{2506192030}
.
\end{equation}

\smallskip
\noindent
\textit{Step 2.}\ 
We next deduce the classical Mourre-type estimate. 
We differentiate 
\begin{align}
\begin{split}
\tfrac{\mathrm d^2}{\mathrm d\mu^2}z^2
&=2h^{-1}\tfrac{\mathrm d}{\mathrm d\mu}a_{ij}(\mu,z)z_{i}\zeta_{j}
\\&
=
2h^{-1}
\Bigl[
(\partial_ta_{ij})(\mu,z)z_{i}\zeta_{j}
+h^{-1} (\partial_ka_{ij})(\mu,z)a_{kl}(\mu,z)z_{i}\zeta_{j}\zeta_{l}
\\&\phantom{{}={}}{}
+h^{-1} a_{ij}(\mu,z)a_{ik}(\mu,z)\zeta_{j}\zeta_{k}
\\&\phantom{{}={}}{}
+a_{ij}(\mu,z)z_{i}\Bigl(-\tfrac12 h^{-1}(\partial_ja_{kl})(\mu,z)\zeta_{k}\zeta_{l}-h(\partial_jV)(\mu,z)
\Bigr)\Bigr]
.
\end{split}
\label{25062017}
\end{align}
Then, using the Cauchy--Schwarz inequality and \eqref{25061920},
and retaking $h_0\in (0,1]$ smaller if necessary, we obtain 
\begin{equation}
\tfrac{\mathrm d^2}{\mathrm d\mu^2}z^2
\ge h^{-2}(c_2-C_4\langle z\rangle^{-1-\epsilon})
.
\label{25062012}
\end{equation}

\smallskip
\noindent
\textit{Step 3.}\ 
Here, letting $h\in (0,1]$ be even smaller if necessary,
we deduce that for any $h\in (0,h_0]$ and $\mu\in [s,0]$
\begin{equation}
|z(\mu;s,y,\eta)|\ge c_3h^{-1}(\mu-s)-C_5
\label{2506192045}
.
\end{equation}
To prove this take $M\ge 0$ large enough that 
\begin{equation}
c_2-C_4\langle M\rangle^{-1-\epsilon}\ge c_4>0,
\quad \text{or}\quad 
\tfrac{\mathrm d^2}{\mathrm d\mu^2}z^2
\ge c_4h^{-2}\ \ \text{if }|z|\ge M
.
\label{25062013}
\end{equation}
Recalling the notation from \eqref{250620}, and then 
by the assumption we can find $t_0>s$ such that 
\[
|x(t_0;s,y,\eta)|\ge  M,\quad 
a_{ij}(t_0,x(t_0;s,y,\eta))x_i(t_0;s,y,\eta)\xi_j(t_0;s,y,\eta)>0. 
\]
On the other hand, write down the equations that 
\[
\bigl(z(s+h(t-s);s,y,\eta),\zeta(s+h(t-s);s,y,\eta)\bigr)
\]
satisfy, and compare them with \eqref{26030522},
and then by continuity of a solution to ODE in parameters 
for each $t\in\mathbb R$ we have 
\[
\lim_{h\to+0}
\bigl(z(s+h(t-s);s,y,\eta),\zeta(s+h(t-s);s,y,\eta)\bigr)
=
(x(t;s,y,\eta), \xi(t;s,y,\eta))
.\]
Therefore, if we take $h_0\in(0,1]$ smaller, we have for any $h\in (0,h_0]$
\begin{align}
\begin{split}
&|z(s+h(t_0-s);s,y,\eta)|\ge M,\\
&
a_{ij}(t_0,z(s+h(t_0-s);s,y,\eta))
\\&{}\cdot z_{i}(s+h(t_0-s);s,y,\eta)\zeta_{j}(s+h(t_0-s);s,y,\eta)>0
.
\end{split}
\label{25062014}
\end{align}
Then we learn that $|z(\mu;s,y,\eta)|$ is non-decreasing in $\mu\in [s+h(t_0-s),0]$,
since, otherwise, by \eqref{25062014} for some $\mu_0\in [s+h(t_0-s),0)$
\begin{equation}
\tfrac{\mathrm d}{\mathrm d\mu}z^2(\mu)>0\ \ \text{for }\mu \in(s+h(t_0-s),\mu_0]
,\quad 
\tfrac{\mathrm d}{\mathrm d\mu}z^2(\mu_0)=0
,
\label{25080319}
\end{equation}
however for that by \eqref{25062013} 
we have to have $|z|<M$ for some $\mu\in (s+h(t_0-s),\mu_0]$,
which contradicts \eqref{25062014} and \eqref{25080319}. Thus we have 
\[
\tfrac{\mathrm d^2}{\mathrm d\mu^2}z^2
\ge c_5h^{-2}\ \ \text{for }\mu\in [s+h(t_0-s),0],
\]
and this along with \eqref{25062014} implies 
\[
|z|^2
\ge M^2+c_6h^{-2}(\mu-s-h(t_0-s))^2\ \ \text{for }\mu\in [s+h(t_0-s),0]
.
\]
Since the orbit $\{(z(\mu;s,y,\eta),\zeta(\mu;s,y,\eta));\ 
\mu\in [s,s+h(t_0-s)]\}$ converges uniformly to 
$\{(x(t;s,y,\eta), \xi(t;s,y,\eta));\ t\in [s,t_0]\}$ as $h\to+0$,
we obtain the claim \eqref{2506192045}. 

\smallskip
\noindent
\textit{Step 4.}\ 
By \eqref{25062016}, \eqref{2506192045} and \eqref{25061920} we have 
\[
\bigl|\tfrac{\mathrm d}{\mathrm d\mu}\bigl(a_{ij}(\mu,z)\zeta_{i}\zeta_{j}\bigr)\bigr|
\le 
C_6\langle h^{-1}(\mu-s)\rangle^{-1-\epsilon}
a_{ij}(\mu,z)\zeta_{i}\zeta_{j}
+C_6h\langle h^{-1}(\mu-s)\rangle^{-\epsilon}
,
\]
from which it follows that for any $h\in (0,h_0]$ and $\mu\in [s,0]$ 
\begin{align}
\begin{split}
&\bigl(1-C_7h\bigr)a_{ij}(s,y)\eta_{i}\eta_{j}
-C_7h^{1+\epsilon}
\\&
\le a_{ij}(\mu,z)\zeta_{i}\zeta_{j}
\le 
\bigl(1+C_7h\bigr)a_{ij}(s,y)\eta_{i}\eta_{j}
+C_7h^{1+\epsilon}
\end{split}
\label{2506201715}
\end{align}
Then we combine \eqref{25062017}, \eqref{2506201715}, \eqref{2506192030} and \eqref{2506192045}
to deduce 
\begin{equation*}
\bigl|\tfrac{\mathrm d^2}{\mathrm d\mu^2}z^2
-2h^{-2}a_{ij}(s,y)\eta_{i}\eta_{j}
\bigr|
\le C_8h^{-2}\langle h^{-1}(\mu-s)\rangle^{-1-\epsilon}
,
\end{equation*}
so that 
\[
\bigl|z^2
-h^{-2}(\mu-s)^2a_{ij}(s,y)\eta_{i}\eta_{j}
\bigr|
\le C_9h^{-1}(\mu-s)+C_9
.
\]
Thus we obtain the assertion.
\end{proof}

\subsubsection{Time-independent Hamiltonian without potential}

Here we compare the orbit $(z(\mu),\zeta(\mu))$ with 
\[
\bigl(\widetilde x(\mu),\widetilde \xi(\mu)\bigr)
=
\bigl(\widetilde x(\mu;s,y,\eta),\widetilde \xi(\mu;s,y,\eta)\bigr)
:=\bigl(x(\mu;s,y,h^{-1}\eta),h\xi(\mu;s,y,h^{-1}\eta)\bigr)
, 
\]
see \eqref{250620} for the notation on the above right-hand side. Obviously, it satisfies 
\[
\tfrac{\mathrm d }{\mathrm d\mu}\widetilde x_{i}
=
h^{-1} a_{ij}(s,\widetilde x)\widetilde \xi_{j}
,\quad 
\tfrac{\mathrm d}{\mathrm d\mu}\widetilde \xi_{i}
=
-\tfrac12 h^{-1}(\partial_ia_{jk}(s,\widetilde x))\widetilde \xi_{j}\widetilde \xi_{k}
\]
with initial data $(\widetilde x(s),\widetilde \xi(s))=(y,\eta)$, 
and these are pretty close to \eqref{25061917},  \eqref{25061918} and \eqref{2506191919} 
satisfied by $(z(\mu),\zeta(\mu))$. 
Such comparison in short-time has already been done in the proof of Lemma~\ref{250804}, 
but here we do so globally. 

By the scaling argument the following limits are straightforward. 
Recall the notation $(x_\pm(s,y,\eta),\xi_\pm(s,y,\eta))$ from Proposition~\ref{250123}. 

\begin{lemma}\label{2603082235}
The following limits exist, and are given as 
\begin{align*}
&\lim_{(\mu-s)/h\to \pm\infty} \bigl(\widetilde x(\mu;s,y,\eta)-h^{-1}(\mu-s)\widetilde \xi(\mu;s,y,\eta)\bigr)
=x_\pm(s,y,\eta)
,
\\
&\lim_{(\mu-s)/h\to \pm\infty} \widetilde \xi(\mu;s,y,\eta)
=
\xi_\pm(s,y,\eta)
\end{align*}
locally uniformly in $(s,y,\eta)\in \Omega_\pm$ with $\pm s<0$, respectively. 
\end{lemma}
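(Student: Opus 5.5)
The plan is to reduce the statement to Proposition~\ref{250123} by the homogeneity of the time-independent Hamiltonian $H_s(x,\xi)=\tfrac12 a_{ij}(s,x)\xi_i\xi_j$. This Hamiltonian is quadratic in $\xi$, so its flow obeys the scaling law
\[
x(s+\lambda^{-1}(t-s);s,y,\lambda\eta)=x(t;s,y,\eta),\qquad
\xi(s+\lambda^{-1}(t-s);s,y,\lambda\eta)=\lambda\,\xi(t;s,y,\eta)
\]
for every $\lambda>0$. To verify it, note that the right-hand sides, viewed as functions of $t'=s+\lambda^{-1}(t-s)$, satisfy \eqref{26030522}: the $t$-derivative picks up a factor $\lambda$, and this matches the factor $\lambda$ (respectively $\lambda^2$) on the right-hand side of the $x$ (respectively $\xi$) equation. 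The initial data also agree at $t'=s$, so uniqueness of solutions to ODE gives the identity.

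Next I apply this with $\lambda=h^{-1}$ and set $\mu=s+h(t-s)$, i.e.\ $t=s+h^{-1}(\mu-s)$. The scaling law gives
\[
\widetilde x(\mu;s,y,\eta)=x\bigl(s+h^{-1}(\mu-s);s,y,\eta\bigr),\qquad
\widetilde\xi(\mu;s,y,\eta)=\xi\bigl(s+h^{-1}(\mu-s);s,y,\eta\bigr).
\]
With $t=s+h^{-1}(\mu-s)$ we have $h^{-1}(\mu-s)=t-s$. Hence
\[
\widetilde x(\mu)-h^{-1}(\mu-s)\widetilde\xi(\mu)=x(t;s,y,\eta)-(t-s)\xi(t;s,y,\eta).
\]
The limit $(\mu-s)/h\to\pm\infty$ is exactly $t\to\pm\infty$. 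Both limits in the statement, together with their local uniformity in $(s,y,\eta)\in\Omega_\pm$, then follow directly from Proposition~\ref{250123}.

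Two points need care. First, $\widetilde x,\widetilde\xi$ depend on $(\mu,h)$ only through the single variable $(\mu-s)/h$, so the limit is well defined as a one-variable limit; this is exactly what the scaling law shows. Second, the restriction $\pm s<0$ plays no role in this lemma: it only matters later, where $\mu$ ranges over $[s,0]$ or $[0,s]$ and that range must be nonempty in the right direction.

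There is no real obstacle in this argument. The only step to check is that the scaling identity is compatible with the $s$-frozen coefficients $a_{ij}(s,\cdot)$. Because the time argument is frozen at $s$, the Hamiltonian has no explicit time dependence, so rescaling the time variable is harmless.
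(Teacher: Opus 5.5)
Your proposal is correct and follows the paper's proof: the paper likewise uses the scaling structure of \eqref{26030522} to obtain the identity \eqref{26031016}, expressing $(\widetilde x,\widetilde\xi)(\mu)$ as the unscaled flow at time $s+h^{-1}(\mu-s)$, and then concludes from Proposition~\ref{250123}. Your explicit verification of the scaling law via ODE uniqueness simply fills in the detail the paper leaves implicit.
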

\begin{proof}
Note that by the scaling structure of the system \eqref{26030522} we have 
\begin{align}
\begin{split}
&\bigl(\widetilde x(\mu;s,y,\eta),\widetilde \xi(\mu;s,y,\eta)\bigr)
\\&
=\bigl(x(s+h^{-1}(\mu-s);s,y,\eta),\xi(s+h^{-1}(\mu-s);s,y,\eta)\bigr). 
\end{split}
\label{26031016}
\end{align}
Then the assertion follows immediately by Proposition~\ref{250123}. 
\end{proof}

Now we obtain a precise limiting behavior of the orbit $(z(\mu),\zeta(\mu))$.

\begin{lemma}\label{2603082236}
The following limits exist, and are given as 
\begin{align*}
&\lim_{h\to +0} 
\bigl\{
\bigl(z(\mu;s,y,\eta)-h^{-1}(\mu-s)\zeta(\mu;s,y,\eta)\bigr)
\\&\qquad\quad
-\bigl(\widetilde x(\mu;s,y,\eta)-h^{-1}(\mu-s)\widetilde \xi(\mu;s,y,\eta)\bigr)
\bigr\}
=0
,
\\
&\lim_{h\to +0} 
\bigl(\zeta(\mu;s,y,\eta)
-\widetilde \xi(\mu;s,y,\eta)\bigr)
=0
\end{align*}
locally uniformly in $(s,y,\eta)\in \Omega_\pm$ with $\pm s<0$ and $\pm\mu\in [\pm s,0]$, respectively. 
\end{lemma}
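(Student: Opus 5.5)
We compare the two orbits through the rescaled time $t=s+h^{-1}(\mu-s)$; we treat only the upper sign, so $\mu\in[s,0]$ and $t\in[s,s+h^{-1}|s|]$. Set
\[
X(t)=z(s+h(t-s)),\quad \Xi(t)=\zeta(s+h(t-s)),
\]
and compare them with $(x(t),\xi(t))=(x(t;s,y,\eta),\xi(t;s,y,\eta))$. By \eqref{26031016} the latter equals $(\widetilde x(\mu),\widetilde\xi(\mu))$. The pair $(X,\Xi)$ satisfies
\[
\dot X_i=a_{ij}(s+h(t-s),X)\Xi_j,\qquad
\dot\Xi_i=-\tfrac12(\partial_ia_{jk})(s+h(t-s),X)\Xi_j\Xi_k-h^2(\partial_iV)(s+h(t-s),X).
\]
Thus it is a perturbation of \eqref{26030522}. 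The defect is
\[
a_{ij}(s+h(t-s),X)-a_{ij}(s,X)=\mathcal O\bigl(h|t-s|\langle X\rangle^{-1-\epsilon}\bigr),
\]
with the analogous bound for $\partial a$, plus the potential term $h^2\langle X\rangle^{-\epsilon}$. We work with the asymptotic variables $w=X-(t-s)\Xi$ and $\Xi$ for the perturbed orbit, and $w_0=x-(t-s)\xi$ and $\xi$ for the free one. The claim is exactly that $w-w_0\to0$ and $\Xi-\xi\to0$, uniformly for $t-s\in[0,h^{-1}|s|]$.

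Step 1 is a uniform decay of forces. By Lemma~\ref{250804} we have $|X(t)|\ge c(t-s)-C$ on the whole range, and the same holds for $x(t)$ with local uniformity, by non-trapping and Proposition~\ref{250123}. Hence every force term along either orbit is integrable in $t$:
\begin{itemize}
\item $\partial a$ terms are $\mathcal O(\langle t-s\rangle^{-2-\epsilon})$;
\item the time-variation defect in $\dot\Xi$ is $\mathcal O(h(t-s)\langle t-s\rangle^{-2-\epsilon})=\mathcal O(h\langle t-s\rangle^{-1-\epsilon})$;
\item the potential term is $h^2\langle t-s\rangle^{-\epsilon}$, which integrates over length $h^{-1}|s|$ to $\mathcal O(h)$.
\end{itemize}
Similarly,
\[
\dot w=(a-\delta)\Xi+(t-s)\bigl(\tfrac12\partial a\,\Xi\Xi+h^2\partial V\bigr),
\]
whose terms are $\mathcal O(\langle t-s\rangle^{-1-\epsilon})$ plus $h^2(t-s)\langle t-s\rangle^{-\epsilon}$, and the latter integrates to $\mathcal O(h^{\epsilon})$ over the range. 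The same bounds hold for $w_0$. Consequently, given $\delta>0$, there is $T$ independent of $h$ such that the variations of $w$, $\Xi$, $w_0$, $\xi$ over $[s+T,s+h^{-1}|s|]$ are all below $\delta$, for $h$ small.

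Step 2 handles the compact part. On $[s,s+T]$ the system for $(X,\Xi)$ converges to \eqref{26030522} as $h\to0$ in $C^1$ on compacts, so continuous dependence on parameters gives $(X,\Xi)\to(x,\xi)$ uniformly on $[s,s+T]$; this is already used in Step 3 of the proof of Lemma~\ref{250804}. Hence $|w-w_0|+|\Xi-\xi|<\delta$ at $t=s+T$ for small $h$. Combining this with Step 1 by the triangle inequality gives $|w-w_0|+|\Xi-\xi|\le3\delta$ uniformly for $t\in[s,s+h^{-1}|s|]$. Translating back to $\mu$ gives both limits, and local uniformity in $(s,y,\eta)$ follows since all constants in Lemma~\ref{250804} and Proposition~\ref{250123} are locally uniform.

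The main obstacle is Step 1, specifically verifying that the constants in the lower bound $|X|\ge c(t-s)-C$ from Lemma~\ref{250804} and the bounds from \eqref{25061920} are uniform in $h$ on the whole range. One must also check that the growing factor $(t-s)$ in $\dot w$, multiplied by the $h$-small time-variation defect, still gives an integrable and $h$-small contribution. Without the $\langle X\rangle^{-1-\epsilon}$ decay of $\partial_t a$ this would fail, so the estimate must use Assumption~\ref{250124} with $\alpha_0=1$ and the linear growth of $|X|$ together.
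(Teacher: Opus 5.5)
Your proposal is correct and is essentially the paper's argument. The paper also bounds the derivatives of both differences termwise via Lemma~\ref{250804}, obtaining $h^{-1}\langle h^{-1}(\mu-s)\rangle^{-2-\epsilon}$ and $h^{-1}\langle h^{-1}(\mu-s)\rangle^{-1-\epsilon}$; it handles the tail $\mu\in[s+hM,0]$ by integrating these bounds and the short piece $\mu\in[s,s+hM]$ by continuous dependence on parameters, with your rescaled time and $T$ corresponding exactly to its $\mu$ and $M$. The only quibble is that your stated ``main obstacle'' is not one here: Lemma~\ref{250804} already gives the $h$-uniform bounds, and your triangle-inequality argument never uses the time-variation defect, so the decay of $\partial_t a$ enters only through that lemma.
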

\begin{proof}
Let us discuss only the upper sign. The following estimates are all locally uniform in 
$(s,y,\eta)\in \Omega_+$ with $s<0$ and $\mu\in [s,0]$ unless otherwise mentioned.
The differences $z-\widetilde x$ and $\zeta-\widetilde\xi$ satisfy
\begin{align}
\tfrac{\mathrm d}{\mathrm d\mu}(z_{i}-\widetilde x_{i})
&=
h^{-1} a_{ij}(\mu,z)\zeta_{j}
-h^{-1} a_{ij}(s,\widetilde x)\widetilde \xi_{j}
,
\label{25062210}
\\
\begin{split}
\tfrac{\mathrm d}{\mathrm d\mu}(\zeta_{i}-\widetilde \xi_{i})
&=
-\tfrac12 h^{-1}(\partial_ia_{jk}(\mu,z))\zeta_{j}\zeta_{k}
+\tfrac12 h^{-1}\bigl(\partial_ia_{jk}(s,\widetilde x)\bigr)\widetilde \xi_{j}\widetilde \xi_{k}
\\&\phantom{{}={}}{}
-h(\partial_iV)(\mu,z)
.
\end{split}
\label{25062211}
\end{align}
By \eqref{25062211} and Lemma~\ref{250804} 
we have for any $h\in (0,h_0]$ and $\mu\in [s,0]$
\begin{align*}
\big|\tfrac{\mathrm d}{\mathrm d\mu}(\zeta-\widetilde \xi)\bigr|
&\le 
C_1h^{-1}\langle h^{-1} (\mu-s)\rangle^{-2-\epsilon} 
.
\end{align*}
Thus for any $M>0$ we have uniformly in $h\in (0,\min\{h_0,-s/M\}]$ and $\mu\in [s+hM,0]$  
\begin{align*}
&
\big|\zeta(\mu;s,y,\eta)-\widetilde \xi(\mu;s,y,\eta)\bigr|
\\&
\le 
\big|\zeta(s+hM;s,y,\eta)-\widetilde \xi(s+hM;s,y,\eta)\bigr|
+C_2h^{-1}\int_{s+hM}^{\mu}\langle h^{-1} (\nu-s)\rangle^{-2-\epsilon} \,\mathrm d\nu
\\&
\le 
\big|\zeta(s+hM;s,y,\eta)-\widetilde \xi(s+hM;s,y,\eta)\bigr|
+C_2\int_{M}^{\infty}\langle \nu\rangle^{-2-\epsilon} \,\mathrm d\nu
.
\end{align*}
The second term on the right-hand side can be arbitrarily small by taking $M>0$ large enough.
On the other hand, by continuity of a solution to ODE in parameters we have 
\[
\lim_{h\to +0}\big|\zeta(\mu;s,y,\eta)-\widetilde \xi(\mu;s,y,\eta)\bigr|=0
\]
uniformly in $\mu\in[s,s+hM]$ cf.\ the proof of Lemma~\ref{250804}. 
Therefore we obtain the latter limits of the assertion. 

As for the former limits, we use \eqref{25062210}, \eqref{25062211} and Lemma~\ref{250804} to have 
\begin{align*}
\bigl|
\tfrac{\mathrm d}{\mathrm d\mu}
\bigl\{
\bigl(z-h^{-1}(\mu-s)\zeta\bigr)
-\bigl(\widetilde x-h^{-1}(\mu-s)\widetilde\xi\bigr)
\bigr\}\bigr|
\le h^{-1}\langle h^{-1}(\mu-s)\rangle^{-1-\epsilon}
.
\end{align*}
Then, similarly to the above, we obtain the former limits of the assertion. 
\end{proof}

\subsection{Proof of the main proposition}\label{260303}

We close this section with the proof of Proposition~\ref{2603011619}.

\begin{proof}[Proof of Proposition~\ref{2603011619}]
\textit{1.}\ 
It suffices to show the assertion for $h=1$. Fix any $\kappa\in [0,1]$. 
By smoothness of a solution to ODE in the initial data 
$\Phi_1(\kappa)=\Phi(\kappa)$ is smooth mapping from $\mathcal U_1(\kappa)$ to the image. 
On the other hand, the converse $\Phi_1(\kappa)^{-1}$ is constructed 
by using the uniqueness of a solution to ODE as follows: 
Given any $(s,y,\sigma,\eta)$ in the image, solve the equations \eqref{25042320}--\eqref{25042323}
with initial data 
\[
(t(\kappa),x(\kappa),\tau(\kappa),\xi(\kappa))=(s,y,\sigma,\eta),  
\]
and then 
\[
\Phi_1(\kappa)^{-1}(s,y,\sigma,\eta)=(t(0),x(0),\tau(0),\xi(0))
.\]
The mapping $\Phi_1(\kappa)^{-1}$ is also smooth, and thus the assertion is verified. 

\smallskip
\noindent
\textit{2.}\ 
We shall not discuss \eqref{260305} as remarked in Remark~\ref{26030822}. 
The locally uniform limits \eqref{2603082229} follows by  
\eqref{2603082230}, \eqref{2603082231}--\eqref{2603082233}, \eqref{2603082234} 
and Lemmas~\ref{2603082236}  and \ref{2603082235}. 

\smallskip
\noindent
\textit{3.}\ 
The asserted estimates for the $t$ component are trivial by the identity \eqref{2603082230},
and that for the $\tau$ component reduces to those for the $x$ and $\xi$ components 
by the identities \eqref{2603082232} and \eqref{2603082234}. 
Thus it suffices to discuss the $x$ and $\xi$ components.  
In this proof let us denote these components by $x'$ and $\xi'$, respectively, 
not to be confused with \eqref{250620}. 
Then by definition they satisfy \eqref{25042321b} and \eqref{25042323b} along with \eqref{25042324b},
and we use these equations to deduce the desired estimates.  

If $\alpha=0$, we can show the assertion by the changes of variables 
\eqref{2603082231} and \eqref{2603082233},
Lemmas~\ref{2603082235} and \ref{2603082236}, 
and the scaling property \eqref{26031016}, see also the proof of Lemma~\ref{250804}. 
For $|\alpha|\ge 1$ we adopt the induction. 
Apply $\partial^\alpha=\partial^\alpha_{s,y,\sigma,\eta}$ to \eqref{25042321b} and \eqref{25042323b}, 
and then by the chain rule, or more rigorously Fa\`a di Bruno's formula, 
the induction hypothesis, and the lower bound from \eqref{2506201715} with \eqref{2603082233} 
it follows that 
\begin{align*}
\bigl|\tfrac{\mathrm d}{\mathrm d\kappa} \partial^\alpha x'\bigr|&
\le 
C_1 h^{-1}\langle h^{-1}\kappa\rangle^{-1-\epsilon}
\bigl(\langle h^{-1}\kappa\rangle^{-1}| \partial^\alpha x'|
+|\partial^\alpha\xi'|
+1\bigr)
,
\\
\bigl|\tfrac{\mathrm d}{\mathrm d\kappa}\partial^\alpha\xi'\bigr|
&\le 
C_1h^{-1}\langle h^{-1}\kappa\rangle^{-2-\epsilon}
\bigl( \langle h^{-1}\kappa\rangle^{-1}| \partial^\alpha x'|
+|\partial^\alpha\xi'|
+1\bigr)
.
\end{align*}
These imply by the Cauchy--Schwarz inequality that 
\begin{align*}
\bigl|\tfrac{\mathrm d}{\mathrm d\kappa}\bigl(|\partial^\alpha x'|^2+|\partial^\alpha\xi'|^2\bigr)\bigr|
\le C_2 h^{-1}\langle h^{-1}\kappa\rangle^{-1-\epsilon}\bigl(|\partial^\alpha x'|^2+|\partial^\alpha\xi'|^2+1\bigr)
,
\end{align*}
so that by Gr\"onwall's inequality 
\[
|\partial^\alpha x'|^2+|\partial^\alpha\xi'|^2
\le C_3
. 
\]
Hence we are done. 
\end{proof}

\section{Singularities in general dimensional space}\label{26012514}

In this section we prove Theorem~\ref{250208}. We split the proof into two steps. 
First, we will reduce it to a Heisenberg-type equation with respect to a certain technical Hamiltonian. 
Once a semiclassical solution to this equation is constructed, 
Theorem~\ref{250208} is an immediate consequence of it. 
For construction of a solution to the Heisenberg equation, 
the corresponding classical mechanics studied in Section~\ref{2601251432} plays an essential role. 
This strategy is affected by Nakamura's paper~\cite{MR2488342}, 
however the associated Hamiltonians are quite different. 

In Section~\ref{26030113} we will give the reduction procedure,
and in Section~\ref{2603013} we will semiclassically solve the Heisenberg equation.

\subsection{Reduction of the first main result}\label{26030113}

Here we present the main proposition of the section, and deduce Theorem~\ref{250208} from it. 
We are going to discuss the Heisenberg equation of the form
\begin{equation}
\mathbf D_L A(\kappa)
:=
\tfrac{\mathrm d}{\mathrm d\kappa}A(\kappa)+\mathrm i[L(\kappa),A(\kappa)]
=0
\ \ \text{for }\kappa\in[0,1], 
\label{25042517}
\end{equation}
where the associated technical Hamiltonian $L(\kappa)$ is given by 
\begin{align}
\begin{split}
L(\kappa)
&
=-t \mathrm e^{-\mathrm i\kappa tK}\Bigl(
\tfrac12 p_i \bigl\{a_{ij}((1-\kappa)t,x)-\delta_{ij}\bigr\}p_j
+V((1-\kappa)t,x)\Bigr)\mathrm e^{\mathrm i\kappa tK}
\\&
=-\tfrac12 t p_i \bigl\{a_{ij}((1-\kappa)t,x-\kappa tp_x)-\delta_{ij}\bigr\}p_j
-tV((1-\kappa)t,x-\kappa tp_x)
.
\end{split}
\label{2603014}
\end{align}
Note for the second identity of \eqref{2603014} we have used \eqref{25020716}. 
The main proposition of the section concerns a semiclassical solution to \eqref{25042517}. 
In order to give a precise statement we introduce a simple symbol class, cf.\ a textbook~\cite{MR1872698} by Martinez. 

\begin{definition}\label{26030116}
For a general dimension $n\in\mathbb N$ 
define $S_{n}$ as a class of all symbols 
$a=a_h\in C^\infty(\mathbb R^n)$, possibly dependent on $h\in (0,h_0]$ for some $h_0>0$, 
such that for any $\alpha\in\mathbb N_0^n$ there exists $C>0$ such that 
uniformly in $h\in (0,h_0]$ and $z\in\mathbb R^n$
\[
|\partial^\alpha a(z)|\le C.
\]
\end{definition}
\begin{remark}
We will implicitly use other standard symbol classes in the later arguments, 
but, for all the statements in the paper, solely the above class suffices. 
\end{remark}

Next, we present the main proposition of the section. 
For that recall the notation from Definition~\ref{26030116} and \eqref{2603015}.

\begin{proposition}\label{260301}
Let $(s,y,\eta)\in \Omega_\pm$ with $\pm s<0$, respectively.
Then there exists a neighborhood $W\subset \mathbb R^{2(1+d)}$ of $(s,y, -\tfrac12a_{ij}(s,y)\eta_i\eta_j,\eta)$ 
such that for any $b\in C^\infty_{\mathrm c}(W)$ 
one can construct $\widetilde b\in C^\infty([0,1];S_{2(1+d)})$ with the following properties. 
\begin{enumerate}
\item
If one denotes 
\begin{equation*}
\widetilde B(\kappa)=\widetilde b^{\mathrm W}(\kappa; t,x,h^2p_t,hp_x)\ \ \text{for }\kappa\in [0,1],
\end{equation*}
then it solves the Heisenberg equation \eqref{25042517} in the semiclassical sense that 
\begin{equation}
\mathbf D_L \widetilde B(\kappa)
=\mathcal O(h^\infty)
\ \ \text{uniformly in }\kappa\in[0,1]
\label{25062214}
\end{equation}
with initial value  
\[
\widetilde B(0)=b^{\mathrm W}(t,x,h^2p_t,hp_x)
.
\]

\item
In addition, uniformly in $\kappa\in[0,1]$
\[
\widetilde b(\kappa)-b\circ\Phi_h(\kappa)^{-1}=\mathcal O(h)\ \ \text{in }S_{2(1+d)},
\quad \ 
\mathop{\mathrm{supp}}\widetilde b(\kappa)\subset 
\Phi_h(\kappa)(\mathop{\mathrm{supp}} b)
.
\]
\end{enumerate}
\end{proposition}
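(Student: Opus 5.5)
We will build $\widetilde b$ by the standard Egorov/transport construction in the semiclassical calculus for the quasi-homogeneous quantization $a\mapsto a^{\mathrm W}(t,x,h^2p_t,hp_x)$. In this scaling $L(\kappa)$ is an $h$-pseudodifferential operator of order $h^{-1}$ at the level of Hamilton dynamics. Writing $L(\kappa)=h^{-2}\,\ell^{\mathrm W}(\kappa;t,x,h^2p_t,hp_x)$, the rescaled symbol is
\[
\ell(\kappa;t,x,\tau,\xi)=-\tfrac12 t\{a_{ij}((1-\kappa)t,x-\kappa th^{-1}\xi)-\delta_{ij}\}\xi_i\xi_j-h^2tV((1-\kappa)t,x-\kappa th^{-1}\xi)+h\,r,
\]
with $r$ a lower order Weyl correction. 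The commutator then has symbol $h^{-2}\cdot\tfrac{h^{2}}{\mathrm i}\{\ell,\cdot\}$ modulo higher terms in the scaled Moyal expansion. Consequently $\mathbf D_L$ acts on symbols as $\partial_\kappa+h^{-1}\{\ell_0,\cdot\}$-type transport. The principal transport equation is exactly the Hamilton flow \eqref{25042320b}--\eqref{25042323b} defining $\Phi_h$. The $h^{-1}$ factors hidden in the $x-\kappa t h^{-1}\xi$ argument are harmless, since along the orbits they are compensated by the decay $\langle h^{-1}\kappa\rangle^{-1-\epsilon}$ from Lemma~\ref{250804}.

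\textbf{Step 1 (principal term).} First choose $W$ inside the neighborhood $U_\pm$ of Proposition~\ref{2603011619}, shrunk so that $\overline W\subset U_\pm$ is compact. Set $b_0(\kappa)=b\circ\Phi_h(\kappa)^{-1}$, which is well defined by assertion 1 of that proposition and by \eqref{260305}. Then $b_0(\kappa)$ solves $\partial_\kappa b_0+\{\ell_0,b_0\}=0$ with $b_0(0)=b$. Its support is $\Phi_h(\kappa)(\operatorname{supp}b)$, which is contained in a fixed compact set uniformly in $h,\kappa$ by \eqref{2603082229}. The derivative bounds $b_0\in S_{2(1+d)}$ uniformly in $\kappa$ follow from assertion 3 for $\Phi_h$. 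We also need the same bounds for $\Phi_h(\kappa)^{-1}$. These we get either by running the same Gr\"onwall argument for the backward flow, or by inverting the Jacobian, whose determinant is $1$ by the symplectic property.

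\textbf{Step 2 (Moyal remainders).} Compute $\mathbf D_L b_0^{\mathrm W}$ by the Moyal expansion. The error symbol is $h\,e_1(\kappa)$ with $e_1$ supported in $\Phi_h(\kappa)(\operatorname{supp}b)$ and bounded in $S_{2(1+d)}$. Here the main obstacle appears: $\ell$ does not belong to $S_{2(1+d)}$ uniformly, because each $\xi$-derivative of $a(\cdot,x-\kappa th^{-1}\xi)$ produces a factor $\kappa t h^{-1}$. We must check that on the relevant support, where $|x-\kappa th^{-1}\xi|\gtrsim h^{-1}\kappa$ by Lemma~\ref{250804} via \eqref{2603082231}, the gain $\langle x-\kappa th^{-1}\xi\rangle^{-1-\epsilon-|\alpha'|}$ from Assumption~\ref{250124} absorbs every such factor. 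The third- and higher-order Moyal terms then carry a net factor $h^{2}\cdot h^{-1}\langle h^{-1}\kappa\rangle^{-1-\epsilon}$, which is integrable in $\kappa$ with $\int_0^1 h^{-1}\langle h^{-1}\kappa\rangle^{-1-\epsilon}\,\mathrm d\kappa=\mathcal O(1)$. For the Moyal expansion itself, we localize $\ell$ with a cutoff equal to $1$ near the compact support, so that the composition formula applies with symbols in a class bounded by powers of $h^{-1}$ and with the remainder controlled.

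\textbf{Step 3 (iteration and conclusion).} Next solve the transport equations $\partial_\kappa b_k+\{\ell_0,b_k\}=-e_k$ with $b_k(0)=0$ by Duhamel along $\Phi_h$:
\[
b_k(\kappa)=-\int_0^\kappa e_k(\kappa')\circ\Phi_h(\kappa')\circ\Phi_h(\kappa)^{-1}\,\mathrm d\kappa'.
\]
Each $b_k$ has support in $\Phi_h(\kappa)(\operatorname{supp}b)$ by the flow invariance of supports, and it is bounded in $S$ uniformly in $\kappa$ by the integrable weight above. Borel-summing $\widetilde b\sim\sum h^k b_k$, with cutoffs keeping the support inside $\Phi_h(\kappa)(\operatorname{supp}b)$, gives \eqref{25062214} together with $\widetilde b-b_0=\mathcal O(h)$. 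The exact inclusion of supports is obtained by choosing the Borel sum as a locally finite sum; an $\mathcal O(h^\infty)$ tail with the wrong support can be dropped.
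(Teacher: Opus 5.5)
Your proposal is correct and follows essentially the paper's route. The common steps are the principal symbol $b\circ\Phi_h(\kappa)^{-1}$ transported by the rescaled flow \eqref{25042320b}--\eqref{25042323b}, negligibility of $L(\kappa)$ away from $\Phi_h(\kappa)(\mathop{\mathrm{supp}}b)$, and Duhamel corrections $-\int_0^\kappa r\circ\Phi_h(\mu)\circ\Phi_h(\kappa)^{-1}\,\mathrm d\mu$ iterated and summed asymptotically; for the negligibility step the paper uses the fixed splitting $l=l_{\mathrm I}+l_{\mathrm{I\hspace{-.03em}I}}+l_{\mathrm{I\hspace{-.03em}I\hspace{-.03em}I}}+l_{\mathrm{I\hspace{-.03em}V}}$ of Lemma~\ref{260311} instead of a cutoff adapted to the moving support. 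Your derivation of the $S_{2(1+d)}$-bounds for $\Phi_h(\kappa)^{-1}$ from $\det D\Phi_h(\kappa)=1$ supplies a detail the paper leaves implicit; note only that the Weyl correction in $\ell$ is $\mathcal O(h^2)$, not $h\,r$ with $r=\mathcal O(1)$, and this is exactly what keeps it out of the principal transport equation and makes your error $h\,e_1$.
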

\begin{remark}
The right-hand side of \eqref{25062214} reads as an operator 
with kernel in the Schwartz class $\mathcal S(\mathbb R^{2(1+d)})$, 
as the symbol of the left-hand side (with $h$ removed) is uniformly compactly supported 
in $\mathbb R^{2(1+d)}$ due to the second assertion, cf.\ Lemma~\ref{260311}. 
\end{remark}

We postpone the proof of Proposition~\ref{260301} to Section~\ref{2603013}.
Below let us prove Theorem~\ref{250208} by using Proposition~\ref{260301}. 

\begin{proof}[Deduction of Theorem~\ref{250208} from Proposition~\ref{260301}]
\textit{Necessity}.\ 
Under the setting of Theorem~\ref{250208}, 
assume $(s,x_\pm,-\tfrac12\xi_\pm^2,\xi_\pm)\notin\mathop{\mathrm{qh\text{-}WF}}\nolimits^2(u_K)$.
It suffices to show that for any 
$b\in S_{2(1+d)}$ supported sufficiently close to $(s,y, -\tfrac12a_{ij}(s,y)\eta_i\eta_j,\eta)$
\begin{equation}
\bigl\langle u,b^{\mathrm W}(t,x,h^2 p_t,hp_x)u\bigr\rangle_{L^2_{t,x}}=\mathcal O(h^\infty)
,
\label{2603011616}
\end{equation}
where $\langle\cdot,\cdot\rangle_{L^2_{t,x}}$ reads in an extended sense as the $\mathcal S'$-$\mathcal S$ pairing. 
Let us choose $\widetilde b$ and $\widetilde B$ as in Proposition~\ref{260301}, and set 
for any $\kappa\in [0,1]$
\begin{align*}
I(\kappa)
=
\bigl\langle \mathrm e^{-\mathrm i\kappa tK}U((1-\kappa)t)\phi,
\widetilde B(\kappa)
\mathrm e^{-\mathrm i\kappa tK}U((1-\kappa)t)\phi\bigr\rangle_{L^2_{t,x}}
.
\end{align*}
Then direct computations show
\begin{equation*}
\tfrac{\mathrm d}{\mathrm d\kappa}I(\kappa)
=
\bigl\langle \mathrm e^{-\mathrm i\kappa tK}U((1-\kappa)t)\phi,
\mathbf D_L \widetilde B(\kappa)
\mathrm e^{-\mathrm i\kappa tK}U((1-\kappa)t)\phi\bigr\rangle_{L^2_{t,x}}
, 
\end{equation*}
so that by the assertion 1 of Proposition~\ref{260301} we obtain 
\[
\bigl\langle u,b^{\mathrm W}(t,x,h^2 p_t,hp_x)u\bigr\rangle_{L^2_{t,x}}
=\langle u,\widetilde B(0)u\rangle_{L^2_{t,x}}
=\langle u_K,\widetilde B(1)u_K\rangle_{L^2_{t,x}}+\mathcal O(h^\infty)
.
\]
Hence \eqref{2603011616} follows by the assertion 2 of Proposition~\ref{260301}, 
Proposition~\ref{2603011619} and the assumption. 

\smallskip
\noindent
\textit{Sufficiency}.\ 
We next assume $(s,y, -\tfrac12a_{ij}(s,y)\eta_i\eta_j,\eta)\notin \mathop{\mathrm{qh\text{-}WF}}\nolimits^2(u)$. 
Let us find a neighborhood $W_K\subset \mathbb R^{2(1+d)}$ of $(s,x_\pm,-\tfrac12\xi_\pm^2,\xi_\pm)$ such that 
for any $c_K\in C^\infty_{\mathrm c}(W_K)$ 
\begin{equation}
\bigl\|c_K^{\mathrm W}(t,x,h^2 p_t,hp_x)u_K\bigr\|_{L^2_{t,x}}=\mathcal O(h^\infty)
.
\label{2603012054}
\end{equation}
Fix a neighborhood $W\subset \mathbb R^{2(1+d)}$ of 
$(s,y, -\tfrac12a_{ij}(s,y)\eta_i\eta_j,\eta)$ as in Proposition~\ref{260301}. 
By the assumption we may let it be small enough that for any $c\in C^\infty_{\mathrm c}(W)$ 
\begin{equation*}
\bigl\|c^{\mathrm W}(t,x,h^2 p_t,hp_x)u\bigr\|_{L^2_{t,x}}
=\mathcal O(h^\infty)
.
\end{equation*}
Now choose any real-valued $c_1\in C^\infty_{\mathrm c}(W)$ satisfying 
$c_1(s,y, -\tfrac12a_{ij}(s,y)\eta_i\eta_j,\eta)=1$, 
and then there exist neighborhoods $W_K\Subset W_{K,1}\Subset \mathbb R^{2(1+d)}$ of 
$(s,x_\pm,-\tfrac12\xi_\pm^2,\xi_\pm)$ such that 
\begin{equation}
c_{K,1}:=c_1\circ \Phi_h(1)^{-1}\ge \tfrac12 \ \ \text{ on }W_K 
,\quad 
c_{K,1}\ge \tfrac14 \ \ \text{ on }W_{K,1}
\label{26030121}
\end{equation}
uniformly in small $h\in (0,1]$. 
We are going to verify that this $W_K$ in fact satisfies the desired property \eqref{2603012054}. 
For that we first apply Proposition~\ref{260301} to the composite symbol $b_1=c_1\,\#^{\mathrm{W,qh}}c_1$, 
and let $\widetilde b_1$ be the associated symbol. 
Then, similarly to the necessity part, it follows that 
\begin{align}
\begin{split}
\langle u_K,\widetilde b_1^{\mathrm W}(1;t,x,h^2 p_t,hp_x)u_K\rangle_{L^2_{t,x}}
&=\bigl\|c_1^{\mathrm W}(t,x,h^2 p_t,hp_x)u\bigr\|_{L^2_{t,x}}^2
+\mathcal O(h^\infty)
\\&
=\mathcal O(h^\infty)
. 
\end{split}
\label{2603012130}
\end{align}
However, since $c_{K,1}\,\#^{\mathrm{W,qh}}c_{K,1}$ gives a principal part of $\widetilde b(1)$, 
\eqref{2603012130} implies that 
\begin{equation}
\bigl\|c_{K,1}^{\mathrm W}(t,x,h^2 p_t,hp_x)u_K\bigr\|_{L^2_{t,x}}^2=\mathcal O(h)
.
\label{2603012127}
\end{equation}
Thus by \eqref{2603012127}, \eqref{26030121} and the standard pseudodifferential calculus we can 
verify that for any $c_K\in C^\infty_{\mathrm c}(W_{K,1})$ 
\begin{equation}
\bigl\|c_K^{\mathrm W}(t,x,h^2 p_t,hp_x)u_K\bigr\|_{L^2_{t,x}}=\mathcal O(h^{1/2})
.
\label{26030120}
\end{equation}
We next choose $c_2\in C^\infty_{\mathrm c}(W)$ and $W_{K,2}\subset \mathbb R^{2(1+d)}$ with 
$W_K\Subset W_{K,2}\Subset W_{K,1}$ such that 
\begin{equation}
c_{K,2}:=c_2\circ \Phi_h(1)^{-1}\ge \tfrac14 \ \ \text{ on }W_{K,2}
,\quad 
\mathop{\mathrm{supp}}c_{K,2}\Subset W_{K,1}
\label{26030139}
\end{equation}
uniformly in small $h\in(0,1]$. We then repeat the above arguments. 
This time, due to the properties \eqref{26030139} and \eqref{26030120}, we obtain 
\begin{equation*}
\bigl\|c_{K,2}^{\mathrm W}(t,x,h^2 p_t,hp_x)u_K\bigr\|_{L^2_{t,x}}^2=\mathcal O(h^2)
, 
\end{equation*}
which along with \eqref{26030139} again implies that for any  $c_K\in C^\infty_{\mathrm c}(W_{K,2})$
\begin{equation*}
\bigl\|c_K^{\mathrm W}(t,x,h^2 p_t,hp_x)u_K\bigr\|_{L^2_{t,x}}=\mathcal O(h)
.
\end{equation*}
We thus inductively obtain \eqref{2603012054}. The proof is done. 
\end{proof}

\subsection{Construction of solution to Heisenberg equation}\label{2603013}

Here we prove Proposition~\ref{260301}. 
In our settings it is unclear how to apply the semiclassical pseudodifferential calculus to 
the equation \eqref{25062214}, since $L$ does not have semiclassical parameter,
and its symbol is in a rather bad class. 
We in fact split it into several parts, so that it fits into the semiclassical framework. 

Let 
\[
L(\kappa)=l^{\mathrm W}(\kappa;t,x,p_t,p_x)=l^{\mathrm W}(\kappa;t,x,p_x)
.
\]
For any $M>\epsilon>0$ 
we introduce cut-off functions $\chi_{\mathrm I}\in C^\infty_{\mathrm c}(\mathbb R)$, 
$\chi_{\mathrm{I\hspace{-.03em}I}},\chi_{\mathrm{I\hspace{-.03em}I}}'\in C^\infty_{\mathrm c}(\mathbb R^d\setminus\{0\})$ 
and $\chi_{\mathrm{I\hspace{-.03em}I\hspace{-.03em}I}}\in C^\infty_{\mathrm c}(\mathbb R^d)$ such that 
\begin{align*}
\chi_{\mathrm I}(t)&=1 \ \ \text{for } |t|<2M,
\\
\chi_{\mathrm{I\hspace{-.03em}I}}(\xi)&=\chi_{\mathrm{I\hspace{-.03em}I}}'(h\xi)=1 \ \ \text{for } \tfrac12\epsilon<h|\xi|<2M
,
\\
\chi_{\mathrm{I\hspace{-.03em}I\hspace{-.03em}I}}(x)&
=1 \ \ \text{for } |x|<2M, 
\end{align*}
and split 
\[
l
=
l_{\mathrm I} 
+l_{\mathrm{I\hspace{-.03em}I}}
+l_{\mathrm{I\hspace{-.03em}I\hspace{-.03em}I}} 
+l_{\mathrm{I\hspace{-.03em}V}} 
=\bar\chi_{\mathrm I} l
+\chi_{\mathrm I}\bar\chi_{\mathrm{I\hspace{-.03em}I}} l
+\chi_{\mathrm I}\chi_{\mathrm{I\hspace{-.03em}I}}\bar\chi_{\mathrm{I\hspace{-.03em}I\hspace{-.03em}I}} l
+\chi_{\mathrm I}\chi_{\mathrm{I\hspace{-.03em}I}}\chi_{\mathrm{I\hspace{-.03em}I\hspace{-.03em}I}} l 
,
\]
where $\bar\chi_*=1-\chi_*$ for 
$*=\mathrm I,\mathrm{I\hspace{-.05em}I}, \mathrm{I\hspace{-.05em}I\hspace{-.05em}I}$.

\begin{lemma}\label{260311}
Let $M>\epsilon>0$. 
\begin{enumerate}
\item
For any $*=\mathrm I,\mathrm{I\hspace{-.05em}I}, \mathrm{I\hspace{-.05em}I\hspace{-.05em}I}$
and $b\in S_{2(1+d)}$ with 
\begin{equation}
\mathop{\mathrm{supp}}b
\subset \bigl\{(t,x,\tau,\xi);\ |t|<M,\ |x|<M,\ \epsilon<|\xi|<M\bigr\}
\label{260310}
\end{equation}
one has 
\begin{align*}
l_*^{\mathrm W}(\kappa;t,x,p_x)\circ b^{\mathrm W}(t,x,h^2p_t,hp_x)&=\mathcal O(h^\infty),\\ 
b^{\mathrm W}(t,x,h^2p_t,hp_x)\circ l_*^{\mathrm W}(\kappa;t,x,p_x)&=\mathcal O(h^\infty),
\end{align*}
as operators with kernels in the Schwartz class, uniformly in $\kappa\in[0,1]$. 
\item
The symbol $l_{\mathrm{I\hspace{-.03em}V}} $ satisfies that 
for any $k\in\mathbb N_0$ and $\alpha,\beta \in \mathbb N_0^d$ there exists $C>0$ such that 
uniformly in $\kappa\in [0,1]$ and $(t,x,\xi)\in\mathbb R^{1+2d}$
\begin{align*}
&\bigl|\partial_t^k\partial_x^\alpha\partial_\xi^\beta l_{\mathrm{I\hspace{-.03em}V}} (\kappa;t,x,\xi)\bigr|\le Ch^{-2-k+|\beta|},
\\
&\bigl|\partial_t^k\partial_x^\alpha\partial_\xi^\beta (l_{\mathrm{I\hspace{-.03em}V}} -l_{0,\mathrm{I\hspace{-.03em}V}} )(\kappa;t,x,\xi)\bigr|\le Ch^{-k+|\beta|}
,
\end{align*}
where $l_{0,\mathrm{I\hspace{-.03em}V}}=\chi_{\mathrm I}\chi_{\mathrm{I\hspace{-.03em}I}}
\chi_{\mathrm{I\hspace{-.03em}I\hspace{-.03em}I}} l_0$ 
with $l_0$ from \eqref{250619}. 
\end{enumerate}
\end{lemma}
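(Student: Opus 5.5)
The plan is to treat each of the three cut-off pieces by a separate disjoint-support argument, and part 2 by direct symbol estimates.

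\emph{Piece $\mathrm I$.} The symbol $l_{\mathrm I}=\bar\chi_{\mathrm I}(t)l$ depends on $t$ only as a multiplication variable. Hence $l_{\mathrm I}^{\mathrm W}$ is a family of operators acting in $x$, multiplied by $\bar\chi_{\mathrm I}(t)$. The operator $b^{\mathrm W}(t,x,h^2p_t,hp_x)$ localizes to $|t|<M$, up to errors decaying rapidly in $h$ and in the distance. Since $\bar\chi_{\mathrm I}$ vanishes on $|t|<2M$, the supports are separated by a distance at least $M$ in $t$. The standard composition formula in the $t$-variable (the $x$-part is a fixed operator for each $t$) then yields an $\mathcal O(h^\infty)$ operator with Schwartz kernel.

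\emph{Piece $\mathrm{I\!I}$.} The symbol $l_{\mathrm{I\!I}}=\chi_{\mathrm I}(t)\bar\chi_{\mathrm{I\!I}}(\xi)l$ is supported in $h|\xi|\notin(\epsilon/2,2M)$, while $b$ is supported in $\epsilon<h|\xi|<M$. I would write $l_{\mathrm{I\!I}}^{\mathrm W}$ in semiclassical form $(h^{-2}\tilde l)^{\mathrm W}(t,x,hp_x)$, with $\tilde l(\kappa;t,x,\xi)=h^2 l(\kappa;t,x,h^{-1}\xi)$.

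The key observation is that
\[
h^2 l=-\tfrac12 t\{a_{ij}((1-\kappa)t,x-\kappa th^{-1}\xi)-\delta_{ij}\}\xi_i\xi_j-h^2tV(\cdots).
\]
By Assumption~\ref{250124}, each $x$-derivative gains a factor $\langle x-\kappa th^{-1}\xi\rangle^{-1}$. Each $\xi$-derivative produces $\kappa t h^{-1}\partial_x$, which is harmless because of this decay. The symbol therefore stays in a class of polynomial growth in $\xi$ and bounded by powers of $h^{-1}$.

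The composition with $b^{\mathrm W}(t,x,h^2p_t,hp_x)$ is then computed in the mixed quasi-homogeneous calculus. Because $\bar\chi_{\mathrm{I\!I}}$ is a Fourier multiplier in $x$ that vanishes near $\operatorname{supp}_\xi b$, every term of the asymptotic expansion vanishes. The remainder is $\mathcal O(h^\infty)$, with polynomial weights absorbed by the rapid decay of $b$ in $(x,\xi)$ off its support.

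\emph{Piece $\mathrm{I\!I\!I}$.} On the support of $l_{\mathrm{I\!I\!I}}$ we have $|x|\ge 2M$, $|t|\le$ const and $h|\xi|$ bounded, whereas $b$ lives in $|x|<M$. This gives spatial separation, and the same composition argument applies. The main obstacle is the non-standard symbol class: derivatives of $V$ grow like $\langle x\rangle^{1-\epsilon}$, and $x-\kappa t\xi$ couples $x$ and $\xi$. I would first check that $l_{\mathrm{I\!I\!I}}$ belongs to a Hörmander-type class $S(\langle x\rangle^{2}\langle\xi\rangle^2h^{-2})$ with $(h,1)$ gains. The separation then gives $\mathcal O(h^\infty\langle x\rangle^{-\infty})$ kernels through the usual integration by parts in the oscillatory integral. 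The reversed compositions $b^{\mathrm W}\circ l_*^{\mathrm W}$ follow by taking adjoints, since $l_*$ are real and the quantizations are formally self-adjoint.

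\emph{Part 2.} On the support of $\chi_{\mathrm I}\chi_{\mathrm{I\!I}}\chi_{\mathrm{I\!I\!I}}$ we have $|t|,|x|$ bounded and $|\xi|\sim h^{-1}$. I would compute the Weyl symbol of the second line of \eqref{2603014} exactly. The Weyl symbol of $p_i f(x-\kappa tp_x)p_j$ equals $\xi_i\xi_j f(x-\kappa t\xi)$ plus a correction of the form $\tfrac14\partial_i\partial_j f$, possibly with cross terms. This correction is $\mathcal O(1)$ with the stated derivative behaviour. The $V$ term is exact, because a function of $x-\kappa tp_x$ has Weyl symbol $V(x-\kappa t\xi)$ by \eqref{25020716}. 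Thus $l-l_0$ consists only of these bounded corrections.

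The derivative counts come from tracking the scalings. Each $\xi$-derivative gains $h$: either it hits $\xi_i\xi_j$ directly, or it hits the argument $x-\kappa t\xi$ and produces $\langle x-\kappa t\xi\rangle^{-1}\lesssim h$, since $|x-\kappa t\xi|\gtrsim \kappa|t|h^{-1}$ when $\kappa |t|$ is not small. When $\kappa |t|$ is small, the $\kappa t$ prefactor compensates instead. Each $t$-derivative may produce a factor $\kappa\xi\sim h^{-1}$. The $x$-derivatives cost nothing. This yields $h^{-2-k+|\beta|}$ for $l_{\mathrm{I\!V}}$ and $h^{-k+|\beta|}$ for the difference.
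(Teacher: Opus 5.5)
Your overall route is the paper's: part~1 by support separation and integration by parts in the kernel of $l_*^{\mathrm W}\circ b^{\mathrm W}$, and part~2 by the exact Weyl symbol of \eqref{2603014}. Indeed $l-l_0=-\tfrac18t(\partial_i\partial_ja_{ij})((1-\kappa)t,x-\kappa t\xi)$, the $V$-term being exact, and this is combined with $\kappa|t|\langle x-\kappa t\xi\rangle^{-1}\lesssim h$ on $\mathop{\mathrm{supp}}\chi_{\mathrm I}\chi_{\mathrm{I\hspace{-.03em}I}}\chi_{\mathrm{I\hspace{-.03em}I\hspace{-.03em}I}}$. Piece~I, the adjoint reduction and part~2 are fine. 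The gap is in piece~III.

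There you assert that $l_{\mathrm{I\hspace{-.03em}I\hspace{-.03em}I}}$ lies in a class whose $\xi$-derivatives gain $h$, so that ``the usual integration by parts'' turns the $x$-separation into $\mathcal O(h^\infty)$. This is false. On $\mathop{\mathrm{supp}}l_{\mathrm{I\hspace{-.03em}I\hspace{-.03em}I}}$ the variable $x$ is unbounded. Near $x=\kappa t\xi$, so $|x|\sim\kappa|t|/h$, one has $\partial_\xi^\beta[f(x-\kappa t\xi)]=(-\kappa t)^{|\beta|}(\partial^\beta f)(x-\kappa t\xi)$. This is of size $(\kappa|t|)^{|\beta|}$, with no power of $h$. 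The bound $\kappa|t|\langle x-\kappa t\xi\rangle^{-1}\lesssim h$ that you use in part~2 requires $|x|$ bounded.

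Your remark in piece~II that $\xi$-derivatives are ``harmless because of this decay'' has the same flaw, but there it does no damage. The separation is in $\xi$, so you only integrate by parts in $y$ through $\mathrm e^{\mathrm iy(\eta-\xi)}$ and need only $x$-derivatives of $l$, which are bounded. In piece~III the separation is in $x$, so you must integrate by parts in $\xi$ through $\mathrm e^{\mathrm i(x-y)\xi}$. Each step then produces only a factor of order $|x-y|^{-1}\bigl(\kappa|t|\langle X-\kappa t\xi\rangle^{-1}+|\xi|^{-1}\bigr)$ with $X=\tfrac12(x+y)$, which is not $\mathcal O(h)$ on its face.

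The missing observation is that the bad region lies far from $\mathop{\mathrm{supp}}b$, and the phase pays for that distance. Modulo $\mathcal O(h^\infty)$ from the kernel of $b^{\mathrm W}$, you may restrict to $|y|<\tfrac32M$; then $|X|\ge2M$ gives $|x-y|\ge\tfrac12|X|$. Since $|X|+|X-\kappa t\xi|\ge\kappa|t||\xi|$ and $h|\xi|\gtrsim1$ on $\mathop{\mathrm{supp}}\chi_{\mathrm{I\hspace{-.03em}I}}$, one gets
\[
\frac{\kappa|t|}{|x-y|\,\langle X-\kappa t\xi\rangle}
\lesssim\frac{\kappa|t|}{\max\{1,\kappa|t||\xi|\}}\lesssim h .
\]
Derivatives falling on $\xi_i\xi_j$ or on $\chi_{\mathrm{I\hspace{-.03em}I}}$ gain $h$ directly. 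Hence each integration by parts in $\xi$ does gain $h$.

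This gain comes only from the interplay of three things: the structure \eqref{250619}, the $x$-separation, and the lower bound on $|\xi|$ inherited from the factor $\chi_{\mathrm{I\hspace{-.03em}I}}$ in $l_{\mathrm{I\hspace{-.03em}I\hspace{-.03em}I}}$. This is presumably what the paper's terse ``cf.\ \eqref{250619}'' points to. It cannot be obtained from membership in a standard semiclassical class, which $l_{\mathrm{I\hspace{-.03em}I\hspace{-.03em}I}}$ does not enjoy.
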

\begin{remark}
The symbol $ l_{\mathrm{I\hspace{-.03em}V}}(\kappa)$ remains to be in a very bad class with respect to 
$t$-derivatives, but it does not cause a big problem. 
This is because we always compose $ l_{\mathrm{I\hspace{-.03em}V}}(\kappa)$ 
with symbols having $h^2$ in front of $\tau$ variable,
and also because $\tau$ is absent from $ l_{\mathrm{I\hspace{-.03em}V}}(\kappa)$, 
and we will not need to take an oscillatory integral, or integrate it by parts, in $t$.  
\end{remark}
\begin{proof}
\textit{1}.\ 
We prove only the first estimate since the second can be treated similarly. 
We can see that the associated operator kernel of the left-hand side is given by 
\begin{align*}
K_*(\kappa;t,x,s,z)
&=(2\pi)^{-(1+2d)}\int_{\mathbb R^{1+3d}}
\mathrm e^{\mathrm i(x-y)\xi+\mathrm i(t-s)\tau+\mathrm i(y-z)\eta}
\\&\qquad \qquad\qquad\qquad {}\cdot
l_*\bigl(\kappa;t,\tfrac{x+y}2,\xi\bigr)b\bigl(\tfrac{t+s}2,\tfrac{y+z}2,h^2\tau,h\eta\bigr)
\,\mathrm d\tau\mathrm d\eta\mathrm dy\mathrm d\xi
. 
\end{align*}
Then the desired estimate follows by support property of the integrand and repeated integrations by parts,
cf.\ \eqref{250619}. 
These are rather standard arguments in the microlocal analysis, 
and we may omit the details. 

\smallskip
\noindent
\textit{2}.\ 
The asserted estimates follow by 
using the support property of the cut-off function
$\chi_{\mathrm I}\chi_{\mathrm{I\hspace{-.03em}I}}\chi_{\mathrm{I\hspace{-.03em}I\hspace{-.03em}I}}$ 
and the asymptotic expansion formula for a composition of the Weyl quantization. 
Let us again omit the details. Thus we are done. 
\end{proof}

We are ready to prove Proposition~\ref{260301}.

\begin{proof}[Proof of Proposition~\ref{260301}]
Let $b\in C^\infty_{\mathrm c}(W)$ be as in the assertion with $W$ being sufficiently small. 
We are going to construct a symbol $\widetilde b(\kappa)$ as an asymptotic sum 
\[
\widetilde b(\kappa)\sim \sum_{j\in\mathbb N_0} h^j\widetilde b_j(\kappa)
.
\]

First, we set 
\begin{equation}
\widetilde b_0(\kappa)=b\circ\Phi_h(\kappa)^{-1}\in S_{2(1+d)}
.
\label{25062222}
\end{equation}
Since $\mathop{\mathrm{supp}}\widetilde b(\kappa)=\Phi_h(\kappa)(\mathop{\mathrm{supp}} b)$ 
uniformly satisfies \eqref{260310} for some $M>\epsilon>0$, 
the assertion 1 from Proposition~\ref{260301} applies to it, and we have 
\begin{equation}
\mathbf D_L\widetilde b_0(\kappa;t,x,h^2p_t,hp_x)
=\mathbf D_{L_{\mathrm{I\hspace{-.03em}V}}}\widetilde b_0(\kappa;t,x,h^2p_t,hp_x)+\mathcal O(h^\infty) 
,
\label{2603108}
\end{equation}
where $L_{\mathrm{I\hspace{-.03em}V}}(\kappa)=l_{\mathrm{I\hspace{-.03em}V}}^{\mathrm W}(\kappa;t,x,p_x)$. 
Then by the assertion 2 from Proposition~\ref{260301} 
the standard symbol calculus is available to compute a principal symbol of 
the first term on the right-hand side of \eqref{2603108}. 
In fact, it vanishes since $\widetilde b_0$ satisfies by its definition \eqref{25062222} that 
\begin{align}
\begin{split}
0&=(\partial_\kappa b_0)(\kappa;t,x,h^2\tau,h\xi)
\\&\phantom{{}={}}{}
+(\partial_\tau l_0)(\kappa;t,x,\xi)(\partial_t b_0)(\kappa;t,x,h^2\tau,h\xi)
\\&\phantom{{}={}}{}
+(\partial_\xi l_0)(\kappa;t,x,\xi)(\partial_xb_0)(\kappa;t,x,h^2\tau,h\xi)
\\&\phantom{{}={}}{}
-h^2(\partial_tl_0)(\kappa;t,x,\xi)(\partial_\tau b_0)(\kappa;t,x,h^2\tau,h\xi)
\\&\phantom{{}={}}{}
-h(\partial_xl_0)(\kappa;t,x,\xi)(\partial_\xi b_0)(\kappa;t,x,h^2\tau,h\xi)
,
\end{split}
\label{25062221}
\end{align}
where $l_0$ is given by \eqref{250619}. 
Therefore, also noting that a remainder term of commutator of 
the Weyl quantization has one more extra decay in $h$, 
we can find $r_0\in C^\infty([0,1];S_{2(1+d)})$ such that 
\begin{equation*}
\mathbf D_L\widetilde b_0(\kappa;t,x,h^2p_t,hp_x)
=
hr_0(\kappa;t,x,h^2p_t,hp_x)
+\mathcal O(h^\infty) 
,
\end{equation*}
and that 
\[
\mathop{\mathrm{supp}}r_0(\kappa)\subset 
\Phi_h(\kappa)(\mathop{\mathrm{supp}} b)
. 
\]

We next let 
\begin{equation*}
\widetilde b_1(\kappa)=-\int_0^\kappa r_0\circ\Phi_h(\mu)\circ\Phi_h(\kappa)^{-1}\,\mathrm d\mu
\in S_{2(1+d)}
.
\end{equation*}
Then, similarly to $\widetilde b_0(\kappa)$, we can find  $r_0\in C^\infty([0,1];S_{2(1+d)})$ such that 
\begin{equation*}
\mathbf D_L\widetilde b_1(\kappa;t,x,h^2p_t,hp_x)
=
h^2r_1(\kappa;t,x,h^2p_t,hp_x)
+\mathcal O(h^\infty) 
,
\end{equation*}
and that 
\[
\mathop{\mathrm{supp}}r_1(\kappa)\subset 
\Phi_h(\kappa)(\mathop{\mathrm{supp}} b)
. 
\]

Repeating the above arguments, we can construct $\widetilde b_j(\kappa)$ for all $j\in\mathbb N_0$,
and thus we are done. 
\end{proof}

\section{Singularities in one-dimensional space}\label{26012420}

In this section we prove Theorem~\ref{250208b}. 
Let us outline the proof. 
A natural idea would be to compare supports of two symbols 
defining the relevant wave front sets of $u_K$ and $\phi$.
However, they have different numbers of arguments, and are not directly comparable. 
Therefore, we first convert them to be defined on the same space to be comparable.  
Then it turns out that the converted symbol concerning $\phi$ has 
much wider support than that concerning $u_K$. 
This implies that the former assertion of Theorem~\ref{250208b} is trivial
whereas the latter would be hopeless. 
However, in the one-dimensional case, we can somehow recover it by using a special partition of unity, 
and the explicit formula \eqref{25020716} involving the free propagator. 

We present these preliminaries in Section~\ref{260201b},
and implement the proof in Section~\ref{260201}.

\subsection{Preliminaries}\label{260201b}

\subsubsection{Symbol conversion}

Let us first rewrite the two conditions from Theorem~\ref{250208b},
employing symbols living in the same dimensional space. 

\begin{lemma}\label{260107}
Let $\phi\in \mathcal S'(\mathbb R^d)$, 
and let $u_K\in \mathcal S'(\mathbb R^{1+d})$ be from \eqref{250204}. 
\begin{enumerate}
\item\label{260107b}
Let $(s,y,\eta)\in \mathbb R^{1+d}\times(\mathbb R^d\setminus\{0\})$.
One has $(s,y,-\tfrac12\eta^2,\eta)\notin \mathop{\mathrm{qh\text{-}WF}}^2(u_K)$ 
if and only if there exists a neighborhood $U\subset \mathbb R^{1+2d}$ of $(s,y,\eta)$ such that 
for any $a\in S_{1+2d}$ supported in  $U$ 
\begin{equation}
\|a^{\mathrm W}(t,x,hp_x)u_K\|_{L^2_{t,x}}=\mathcal O(h^\infty)
\label{26021816}
\end{equation}
as $h\to +0$.
\item\label{260120}
Let $(s,y,\eta)\in \mathbb R\times (\mathbb R^{2d}\setminus\{0\})$. 
One has $(y,\eta)\not\in \mathop{\mathrm{HWF}}(\phi)$
if and only if there exists a neighborhood $U\subset \mathbb R^{1+2d}$ of $(s,y+s\eta,\eta)$
such that for any $a\in S_{1+2d}$ supported in $U$  
\begin{equation}
\|a^{\mathrm W}(t,hx,hp_x)u_K\|_{L^2_{t,x}}=\mathcal O(h^\infty)
\label{26021817}
\end{equation}
as $h\to +0$.
\end{enumerate}
\end{lemma}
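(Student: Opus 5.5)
For part 1, I will exploit the fact that $u_K$ solves the free equation, so $p_t u_K=-K u_K=-\tfrac12 p_x^2u_K$. Hence $h^2p_t$ acts on $u_K$ as $-\tfrac12(hp_x)^2$, and the $\tau$-variable in the quasi-homogeneous symbol can be eliminated.

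For the ``if'' direction, take $a\in S_{1+2d}$ supported near $(s,y,\eta)$ with $a(s,y,\eta)\neq0$, and $\chi\in C^\infty_{\mathrm c}(\mathbb R)$ equal to $1$ near $-\tfrac12\eta^2$. I then set
\[
b(t,x,\tau,\xi)=a(t,x,\xi)\chi(\tau),
\]
which is compactly supported with $b(s,y,-\tfrac12\eta^2,\eta)\neq0$. By the Weyl calculus in the quasi-homogeneous scaling,
\[
b^{\mathrm W}(t,x,h^2p_t,hp_x)=a^{\mathrm W}(t,x,hp_x)\chi(h^2p_t)+\mathcal O(h)
\]
as an expansion with compactly supported terms. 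On $u_K$ I write
\[
\chi(h^2p_t)u_K=\chi\bigl(-\tfrac12(hp_x)^2\bigr)u_K .
\]
This is exact, since the $t$-Fourier transform of $u_K$ is supported on $\tau=-\tfrac12\xi^2$. Consequently $b^{\mathrm W}u_K$ equals a symbol in $(t,x,hp_x)$ supported in $U$, applied to $u_K$, modulo lower-order terms with the same support. This gives $\mathcal O(h^\infty)$.

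For the ``only if'' direction, let $a$ be supported in a small $U$. I write
\[
a^{\mathrm W}(t,x,hp_x)u_K=a^{\mathrm W}(t,x,hp_x)\chi(h^2p_t)u_K+a^{\mathrm W}(1-\chi(h^2p_t))u_K .
\]
For the second term, note that $1-\chi(h^2p_t)$ acting on $u_K$ equals $(1-\chi)(-\tfrac12(hp_x)^2)$. Composed with $a^{\mathrm W}(t,x,hp_x)$, whose $\xi$-support lies near $\eta$, this is $\mathcal O(h^\infty)$ by disjointness of supports in the semiclassical $x$-calculus. The $t$-dependence is harmless here, because $t$ is only a parameter in $a$ and $p_t$ is absent. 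The first term is a quasi-homogeneous symbol localized near $(s,y,-\tfrac12\eta^2,\eta)$, up to $\mathcal O(h)$ corrections with nested supports. A finite partition of unity in $W$ therefore yields $\mathcal O(h^\infty)$ from the hypothesis.

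A technical point is that $u_K$ is only tempered, so I will insert a cutoff in $t$ (the symbol $a$ is compactly supported in $t$) to make the pairings rigorous.

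For part 2, I use the exact Egorov identity \eqref{25020716}. For fixed $t$,
\[
a^{\mathrm W}(t,hx,hp_x)e^{-\mathrm itK}\phi=e^{-\mathrm itK}a^{\mathrm W}(t,hx+thp_x,hp_x)\phi .
\]
The operator $a^{\mathrm W}(t,hx,hp_x)$ does not involve $p_t$, so the $L^2_{t,x}$ norm is
\[
\int\|a_t^{\mathrm W}(h(x+tp_x),hp_x)\phi\|_{L^2_x}^2\,\mathrm dt ,
\]
by unitarity of $e^{-\mathrm itK}$. Here $a_t(y',\xi)=a(t,y'+t\xi,\xi)$ is supported, for $t$ near $s$, near $(y,\eta)$ when $a$ is supported near $(s,y+s\eta,\eta)$.

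Thus, if $(y,\eta)\notin\mathop{\mathrm{HWF}}(\phi)$, then each slice is $\mathcal O(h^\infty)$ uniformly in $t$ in a compact set. Uniformity follows because $\{a_t\}$ is a bounded family in $C^\infty_{\mathrm c}$ supported in a fixed small neighborhood, and the HWF condition extends to all symbols supported there by the standard ellipticity argument. Integrating in $t$ gives \eqref{26021817}.

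Conversely, given \eqref{26021817}, I choose $a(t,x,\xi)=\psi(t)c(x-t\xi,\xi)$ with $c$ supported near $(y,\eta)$ and $\psi$ near $s$. Then $a_t=\psi(t)c$, and the identity above gives
\[
\|\psi\|_{L^2}^2\,\|c^{\mathrm W}(hx,hp_x)\phi\|^2=\mathcal O(h^\infty),
\]
so $(y,\eta)\notin\mathop{\mathrm{HWF}}(\phi)$.

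The main obstacle I expect is in part 1: justifying the reduction of $\chi(h^2p_t)$ and controlling the mixed composition when symbols lack uniform decay in $\tau$. I would handle it with the explicit kernel and integration by parts, as in Lemma~\ref{260311}.
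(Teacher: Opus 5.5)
Your proof is correct. Part 2 is essentially the paper's argument: the identity \eqref{25020716} applied fibrewise in $t$, unitarity of $\mathrm e^{-\mathrm itK}$, and the test symbol $\psi(t)c(x-t\xi,\xi)$ for the converse. You also spell out the direction the paper omits, via uniformity over the bounded family $a_t$. There is one notational slip. With $a_t(x',\xi)=a(t,x'+t\xi,\xi)$, the slice norm is $\|a_t^{\mathrm W}(hx,hp_x)\phi\|_{L^2_x}=\|a^{\mathrm W}(t,hx+thp_x,hp_x)\phi\|_{L^2_x}$. Writing $\|a_t^{\mathrm W}(h(x+tp_x),hp_x)\phi\|_{L^2_x}$ applies the shift twice; your converse uses the correct version.

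Part 1 follows a genuinely different route from the paper.

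The paper views the $\tau$-independent symbol $a(t,x,\xi)$ as a symbol on $\mathbb R^{2(1+d)}$. It splits it as $\chi a+(1-\chi)a$, with $\chi$ localizing near $\{\tau=-\tfrac12\xi^2\}$. The non-characteristic part is then removed by repeated integration by parts in the kernel, using $(2\tau+\xi^2)^{-1}(-2p_r+p_z^2)$, the equation $(2p_t+p_x^2)u_K=0$, and a further integration by parts to eliminate $p_zu_K$. The ``if'' direction is simply declared obvious.

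You instead split at the operator level with $\chi(h^2p_t)$. You then use the exact identity $f(h^2p_t)u_K=f(-\tfrac12(hp_x)^2)u_K$, which holds because $\mathcal F_{t,x}u_K=2\pi\delta(\tau+\tfrac12\xi^2)\hat\phi(\xi)$. This turns the non-characteristic piece into a $t$-fibrewise composition of semiclassical operators in $x$ with disjoint $\xi$-supports.

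Comparing the two: your version is shorter and avoids the kernel bookkeeping. It uses the Fourier-side structure of $u_K$ directly, whereas the paper's argument is a kernel-level form of the usual microlocal elliptic estimate from the equation. The price you pay is the mixed composition $a^{\mathrm W}(t,x,hp_x)\chi(h^2p_t)$ in the quasi-homogeneous calculus. Its corrections are of the form $h^{2k}\partial_t^ka\,\partial_\tau^k\chi$ and keep the same product structure, so your iteration closes.

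To keep your choices consistent, fix them in this order:
\begin{enumerate}
\item Put $\mathop{\mathrm{supp}}\chi$ inside the $\tau$-window of the neighbourhood where the quasi-homogeneous hypothesis holds.
\item Only then shrink $U$ so that $\chi(-\tfrac12\xi^2)=1$ near the $\xi$-projection of $U$.
\end{enumerate}
These two requirements are compatible.
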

\begin{remark}\label{2602181606}
If we rewrite the two conditions of Theorem~\ref{250208b} by using Lemma~\ref{260107}, 
we can see that the pseudodifferential operator corresponding to \eqref{26021817} has a much wider microlocal support 
than the one to \eqref{26021816}. 
Thus the former assertion of Theorem~\ref{250208b} is trivial. 
\end{remark}

\begin{proof} 
\textit{\ref{260107b}}.\ 
The sufficiency is obvious, and it suffices to show the necessity. 
Assume $(s,y,-\tfrac12\eta^2,\eta)\notin \mathop{\mathrm{qh\text{-}WF}}^2(u_K)$. 
By the standard arguments in the pseudodifferential calculus 
we can find a neighborhood $V\subset \mathbb R^{2(1+d)}$ of $(s,y,-\tfrac12\eta^2,\eta)$
such that for any $b\in S_{2(1+d)}$ supported in $V$ 
\begin{equation}
\|b^{\mathrm W}(t,x,h^2p_t,hp_x)u_K\|_{L^2_{t,x}}=\mathcal O(h^\infty)
.
\label{26020616}
\end{equation}
Then we choose a small neighborhood $U\subset \mathbb R^{1+2d}$ of $(s,y,\eta)$ such that 
\[
S=\bigl\{\bigl(t,x,-\tfrac12\xi^2,\xi\bigr)\in\mathbb R^{2(1+d)};\ (t,x,\xi)\in U\bigr\}
\Subset V
,
\]
and fix $\chi\in C^\infty_{\mathrm c}(V)$ such that $\chi=1$ in a neighborhood of $S$.
Now we take any $a\in S_{1+2d}$ supported in $U$, and split 
\begin{align}
\begin{split}
a^{\mathrm W}(t,x,hp_x)u_K(t,x)
&=
(\chi a)^{\mathrm W}(t,x,h^2p_t,hp_x)u_K(t,x)
\\&\phantom{{}={}}{}
+((1-\chi) a)^{\mathrm W}(t,x,h^2p_t,hp_x)u_K(t,x)
.
\end{split}
\label{2602061647}
\end{align}
Then the first term on the right-hand side of \eqref{2602061647} is $\mathcal O(h^\infty)$ 
due to \eqref{26020616}, and hence it suffices to discuss the second. 
Note that we have an integral expression
\begin{align*}
&
(2\pi)^{1+d}((1-\chi) a)^{\mathrm W}(t,x,h^2p_t,hp_x)u_K(t,x)
\\&
=\int_{\mathbb R^{2(1+d)}}
\mathrm e^{\mathrm i(t-r)\tau+\mathrm i(x-z)\xi}
((1-\chi) a)\bigl(\tfrac12(t+r),\tfrac12(x+z),h^2\tau,h\xi\bigr)
u_K(r,z)
\,\mathrm dr\mathrm dz\mathrm d\tau\mathrm d\xi 
.
\end{align*}
We first integrate it by parts by using the identity 
\[
\mathrm e^{\mathrm i(t-r)\tau+\mathrm i(x-z)\xi}
=(2\tau+\xi^2)^{-1}
(-2p_r+p_z^2)\mathrm e^{\mathrm i(t-r)\tau+\mathrm i(x-z)\xi}
,
\]
which is non-singular on $\mathop{\mathrm{supp}}((1-\chi)a)$. 
Then we can write it as 
\begin{align*}
&
(2\pi)^{1+d}((1-\chi) a)^{\mathrm W}(t,x,h^2p_t,hp_x)u_K(t,x)
\\&
=\int_{\mathbb R^{2(1+d)}}
\mathrm e^{\mathrm i(t-r)\tau+\mathrm i(x-z)\xi}
(2\tau+\xi^2)^{-1}
\\&\qquad \qquad \quad {}
\cdot 
(2p_r+p_z^2)\bigl[((1-\chi) a)\bigl(\tfrac12(t+r),\tfrac12(x+z),h^2\tau,h\xi\bigr)
u_K(r,z)\bigr]
\,\mathrm dr\mathrm dz\mathrm d\tau\mathrm d\xi
.
\end{align*}
Next, we use the product rule and the free Schr\"odinger equation $(2p_t+p_x^2)u_K=0$,
and then only $u_K$ and $p_zu_K$ are left in the integrand. 
However, we can replace $p_zu_K$ by $u_K$ by using integration by parts, so that we obtain 
\begin{align*}
&
(2\pi)^{1+d}((1-\chi) a)^{\mathrm W}(t,x,h^2p_t,hp_x)u_K(t,x)
\\&
=\int_{\mathbb R^{2(1+d)}}
\mathrm e^{\mathrm i(t-r)\tau+\mathrm i(x-z)\xi}
(L(1-\chi) a)\bigl(\tfrac12(t+r),\tfrac12(x+z),h^2\tau,h\xi\bigr)u_K(r,z)
\,\mathrm dr\mathrm dz\mathrm d\tau\mathrm d\xi
\end{align*}
with 
\[
L=\tfrac14(2\tau+\xi^2)^{-1}(4p_t+4\xi p_x-p_x^2)
. 
\]
We can repeat this procedure as many times as we want, and thus obtain 
\[
((1-\chi) a)^{\mathrm W}(t,x,h^2p_t,hp_x)u_K(t,x)
=
(2\pi)^{-1-d}\int_{\mathbb R^{1+d}}\mathcal K(t,x,r,z)u_K(r,z)\,\mathrm dr\mathrm dz
\]
with 
\[
\mathcal K(t,x,r,z)=\int_{\mathbb R^{2(1+d)}}
\mathrm e^{\mathrm i(t-r)\tau+\mathrm i(x-z)\xi}
(L^k(1-\chi) a)\bigl(\tfrac12(t+r),\tfrac12(x+z),h^2\tau,h\xi\bigr)
\,\mathrm d\tau\mathrm d\xi
\]
for any $k\in\mathbb N_0$. 
Obviously, the kernel $\mathcal K$ gives a smoothing operator of order $\mathcal O(h^\infty)$,
and thus we are done with the assertion~\ref{260107b}. 

\smallskip
\noindent
\textit{\ref{260120}}.\ 
We first assume that there exists a neighborhood $U\subset \mathbb R^{1+2d}$ of $(s,y+s\eta,\eta)$ as in the assertion. 
We are going to show that there exists $\epsilon>0$ such that 
for any $b\in S_{2d}$ supported in an $\epsilon$-neighborhood of $(y,\eta)$
\begin{equation}
\|b^{\mathrm W}(hx,hp_x)\phi\|_{L^2_{x}}
=\mathcal O(h^\infty)
\ \ \text{as }h\to +0
.
\label{260119}
\end{equation}
Fix $\chi\in C^\infty_{\mathrm c}([0,\infty))$ such that 
\begin{equation}
\chi(\lambda)=\begin{cases}1&\text{for }\lambda\in [0,1],\\0&\text{for }\lambda\in [2,\infty),\end{cases}
\quad
\chi'\le 0, 
\label{260121}
\end{equation}
and set for $\delta>0$
\[
\chi_\delta=\chi(\delta^{-1}|\cdot|).
\]
Then by \eqref{25020716} we can rewrite the left-hand side of \eqref{260119} as 
\begin{align}
\begin{split}
\|b^{\mathrm W}(hx,hp_x)\phi\|_{L^2_{x}}^2
&
=\|\chi_\delta\|_{L^2}^{-2}\int_{\mathbb  R}\chi_\delta(t-s)^2
\bigl\|b^{\mathrm W}(hx-thp_x,hp_x)\mathrm e^{-\mathrm itK}\phi\bigr\|_{L^2_{x}}^2
\,\mathrm dt
\\&
=
\|\chi_\delta\|_{L^2}^{-2}\|a^{\mathrm W}(t,hx,hp_x)u_K\|_{L^2_{t,x}}^2
\end{split}
\label{26021815}
\end{align}
with 
\begin{equation}
a(t,x,\xi)
=\chi_\delta(t-s)b(x-t\xi,\xi).
\label{260218}
\end{equation}
If we let $\epsilon,\delta>0$ be small enough, 
$a\in S_{1+2d}$ is supported in $U$, so that \eqref{260119} holds. 
Thus the sufficiency part of the second assertion is done. 

We can prove the necessity part similarly to the above by using a symbol of the form \eqref{260218}. 
It is straightforward, and we omit the details. We are done. 
\end{proof}

\subsubsection{Partition of unity generated by free classical flow}

For the latter assertion of Theorem~\ref{250208b}
we have to recover the stronger estimate \eqref{26021817} 
from the weaker one \eqref{26021816}, as remarked in Remark~\ref{2602181606}. 
The following technical partition of unity is a key,  
as well as the exact identity \eqref{25020716}. 

\begin{lemma}\label{260219}
Fix any $\epsilon>0$. 
Then there exists a sequence $(\chi_{m,n})_{(m,n)\in\mathbb Z^2}$
of functions belonging to $C^\infty_{\mathrm c}(\mathbb R^2)$ 
such that the following holds.  
\begin{enumerate}
\item
For any $(m,n)\in\mathbb Z^2$, $\mathop{\mathrm{supp}}\chi_{m,n}\subset [-2\epsilon,2\epsilon]\times 
[2^{n-2},2^{n+1}]$.
\item
For any $(k,l)\in\mathbb N_0^2$ there exists $C>0$ such that 
for any $(m,n)\in\mathbb Z^2$ and $(\mu,\nu)\in\mathbb R^2$
\[
|\partial_\mu^k\partial_\nu^l\chi_{m,n}(\mu,\nu)|\le C2^{-nl}
.
\]
\item
For any $(\mu,\nu)\in\mathbb R\times (0,\infty)$
\[
\sum_{(m,n)\in\mathbb Z^2}\chi_{m,n}(\mu-\epsilon m2^{-n}\nu,\nu)=1
.
\]
\end{enumerate}
\end{lemma}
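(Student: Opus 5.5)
The plan is to build the partition by a two-step construction: first a dyadic partition in $\nu$, then, within each dyadic shell, a partition in $\mu$ along the sheared lattice $\mu=\epsilon m2^{-n}\nu$. The shear depends on $\nu$, so the second partition has to be normalized pointwise in $\nu$.

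\emph{Dyadic part in $\nu$.} Fix $\psi\in C^\infty_{\mathrm c}((\tfrac12,2))$ with $\psi\ge0$ and $\sum_{n\in\mathbb Z}\psi(2^{-n}\nu)=1$ for every $\nu>0$; this is the standard Littlewood--Paley choice. Set $\psi_n(\nu)=\psi(2^{-n}\nu)$. Then $\operatorname{supp}\psi_n\subset[2^{n-1},2^{n+1}]$ and $|\partial_\nu^l\psi_n|\le C2^{-nl}$.

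\emph{Sheared part in $\mu$.} Fix $\varphi\in C^\infty_{\mathrm c}((-\epsilon,\epsilon))$ with $\varphi\ge0$ and $\varphi>0$ on $[-\epsilon/2,\epsilon/2]$. For $\nu\in[2^{n-1},2^{n+1}]$ the lattice spacing is $\epsilon2^{-n}\nu\in[\epsilon/2,2\epsilon]$. Define
\[
\Sigma_n(\mu,\nu)=\sum_{m\in\mathbb Z}\varphi(\mu-\epsilon m2^{-n}\nu).
\]
Every $\mu$ lies within half a spacing, hence within $\epsilon$, of some lattice point. I would take $\varphi>0$ on $[-\epsilon,\epsilon]$ but supported in $(-2\epsilon,2\epsilon)$, which still fits the allowed support $[-2\epsilon,2\epsilon]$. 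With this choice $\Sigma_n\ge c>0$ for $\nu$ in a neighborhood of $[2^{n-1},2^{n+1}]$, and the sum is locally finite with uniformly bounded multiplicity, roughly $4\epsilon/(\epsilon/2)=8$ terms. Moreover $\Sigma_n$ is invariant under $\mu\mapsto\mu+\epsilon2^{-n}\nu$. Now define
\[
\chi_{m,n}(\mu,\nu)=\frac{\varphi(\mu)\,\psi_n(\nu)}{\Sigma_n(\mu,\nu)} .
\]
By translation invariance of $\Sigma_n$,
\[
\chi_{m,n}(\mu-\epsilon m2^{-n}\nu,\nu)=\frac{\varphi(\mu-\epsilon m2^{-n}\nu)\,\psi_n(\nu)}{\Sigma_n(\mu,\nu)}.
\]
Summing over $m$ gives $\psi_n(\nu)$, and summing over $n$ gives $1$. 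This proves property 3. In this construction $\chi_{m,n}$ is in fact independent of $m$; that is allowed by the statement. The support lies in $[-2\epsilon,2\epsilon]\times[2^{n-1},2^{n+1}]\subset[-2\epsilon,2\epsilon]\times[2^{n-2},2^{n+1}]$, which gives property 1.

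\emph{Derivative bounds.} The main point is property 2, specifically the $\nu$-derivatives of $1/\Sigma_n$. We have
\[
\partial_\nu\varphi(\mu-\epsilon m2^{-n}\nu)=-\epsilon m2^{-n}\varphi'(\cdots),
\]
and the factor $m$ is not bounded. However, on the support of $\varphi(\mu)$ we have $|\mu|<2\epsilon$, and the relevant terms satisfy $|\mu-\epsilon m2^{-n}\nu|<2\epsilon$. These force $|m|\lesssim 1$ when $\nu\sim2^n$. So on $\operatorname{supp}\chi_{m,n}$ only boundedly many $m$ contribute, each with $|m|\le C$. Hence each $\nu$-derivative of these terms costs $C2^{-n}$, while $\mu$-derivatives cost $O(1)$. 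The same localization applies to higher derivatives, using the chain rule and Fa\`a di Bruno's formula for $1/\Sigma_n$ together with the lower bound $\Sigma_n\ge c$. Combined with the bounds on $\psi_n$, this yields $|\partial_\mu^k\partial_\nu^l\chi_{m,n}|\le C2^{-nl}$ uniformly in $(m,n)$.

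I expect the main care to be needed in verifying, uniformly in $n$, that $\Sigma_n$ is bounded below on a neighborhood of $\operatorname{supp}\psi_n$. This is settled by the choice $\varphi>0$ on $[-\epsilon,\epsilon]$ together with the spacing bound $\epsilon2^{-n}\nu\le2\epsilon$.
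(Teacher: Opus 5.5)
Your construction is correct. It differs from the paper's in how the normalization is organized, and the difference pays off.

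\textbf{The paper's route.} It takes one two-variable bump $\widetilde\eta(\mu,\nu)=\chi(|\mu|/\epsilon)(\chi(\nu/2)-\chi(2\nu))$ and normalizes it by the joint sum $Y(\mu,\nu)=\sum_{(m,n)\in\mathbb Z^2}\widetilde\eta(\mu-\epsilon m2^{-n}\nu,2^{-n}\nu)$. It then sets $\chi_{m,n}(\mu,\nu)=Y(\mu+\epsilon m2^{-n}\nu,\nu)^{-1}\widetilde\eta(\mu,2^{-n}\nu)$. Since $Y$ superposes dyadic shells with different shear rates, it is not periodic in $\mu$, so the paper's $\chi_{m,n}$ in general depends on $m$. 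Property 2 is then obtained by counting the summands of $Y$ that can meet $\mathop{\mathrm{supp}}\widetilde\eta(\cdot,2^{-n}\cdot)$: only $n-2\le j\le n+2$ and $|i-m2^{j-n}|$ bounded.

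\textbf{Your route.} You take an exact dyadic partition $\psi_n$ in $\nu$ first and normalize in $\mu$ only inside each shell. Then $\Sigma_n$ is exactly periodic with period $\epsilon2^{-n}\nu$. Property 3 follows by reindexing $m$ and summing $\psi_n$, and the $m$-dependence disappears.

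\textbf{What it buys.} Since $\Sigma_n(\mu,\nu)=\Sigma_0(\mu,2^{-n}\nu)$ and $\psi_n(\nu)=\psi(2^{-n}\nu)$, you have $\chi_{m,n}(\mu,\nu)=\chi_{0,0}(\mu,2^{-n}\nu)$ with $\chi_{0,0}\in C^\infty_{\mathrm c}(\mathbb R^2)$. So property 2 follows from the chain rule alone. Your localization argument with $|m|\lesssim 1$ is valid but not needed. The uniform lower bound on $\Sigma_n$ likewise reduces to the single case $n=0$. Your support $[-2\epsilon,2\epsilon]\times[2^{n-1},2^{n+1}]$ also lies inside the interval in property 1. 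The paper's $\widetilde\eta(\mu,2^{-n}\nu)$ lives on $\nu\in[2^{n-1},2^{n+2}]$, a harmless dyadic shift relative to the stated $[2^{n-2},2^{n+1}]$.

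\textbf{One cleanup.} Discard your first choice $\varphi\in C^\infty_{\mathrm c}((-\epsilon,\epsilon))$. It fails because the half-spacing $\tfrac12\epsilon2^{-n}\nu$ approaches $\epsilon$ on $\mathop{\mathrm{supp}}\psi_n$, so $\Sigma_n$ can vanish there. Keep the revised choice, $\varphi>0$ on $[-\epsilon,\epsilon]$ with support in $(-2\epsilon,2\epsilon)$.
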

\begin{proof}
Fix $\chi\in C^\infty_{\mathrm c}([0,\infty))$ satisfying \eqref{260121}, 
and we set 
\[
\widetilde \eta(\mu,\nu)=\chi(|\mu|/\epsilon)\bigl(\chi(\nu/2)-\chi(2\nu)\bigr)
.
\]
Then, since 
\begin{equation}
\widetilde \eta=1\ \ \text{on }[-\epsilon,\epsilon]\times [1,2]
,\quad 
\mathop{\mathrm{supp}}\widetilde\eta\subset [-2\epsilon,2\epsilon]\times [1/2,4]
, 
\label{2602182244}
\end{equation}
a sum 
\[
Y(\mu,\nu)=\sum_{(m,n)\in\mathbb Z^2}\widetilde \eta(\mu-\epsilon m2^{-n}\nu,2^{-n}\nu)
\]
is uniformly locally finite and uniformly positive on $\mathbb R\times (0,\infty)$. 
Now we define 
\begin{equation}
\chi_{m,n}(\mu,\nu)=Y\bigl(\mu+\epsilon m2^{-n}\nu,\nu\bigr)^{-1} \widetilde\eta(\mu,2^{-n}\nu)
,
\label{26021822}
\end{equation}
and verify the asserted properties. 
The properties 1 and 3 are clear from their construction. 
To verify the property 2 we shall count, on the right-hand side of \eqref{26021822}, 
how many summands in $Y(\mu+\epsilon m2^{-n}\nu,\nu)$ could survive,
or could have supports intersecting with that of $\widetilde\eta(\mu,2^{-n}\nu)$. 
Noting \eqref{2602182244}, we in fact have the expression \eqref{26021822} reduced at most to 
\begin{align*}
\chi_{m,n}(\mu,\nu)
&=
\Biggl(\sum_{j=n-2}^{n+2}\sum_{|i-m2^{j-n}|\le 100}
\widetilde \chi_{j}\bigl(\mu+\epsilon m2^{-n}\nu-\epsilon i2^{-j}\nu,2^{-j}\nu\bigr)
\Biggr)^{-1}\widetilde\eta(\mu,2^{-n}\nu)
.
\end{align*}
Thus the assertion follows. 
\end{proof}

\subsection{Proof of the second main result}\label{260201}

Now we are ready to prove Theorem~\ref{250208b}. 

\begin{proof}[Proof of Theorem~\ref{250208b}]
We have only to prove the latter assertion, see Remark~\ref{2602181606}.  
Assume $(s,y,-\tfrac12\eta^2,\eta)\notin \mathop{\mathrm{qh\text{-}WF}}^2(u_K)$.
Take a neighborhood $U\subset\mathbb R^{1+2d}$ of $(s,y,\eta)$ 
as in the first assertion of Lemma~\ref{260107},
and for small $\epsilon>0$ let $(\chi_{m,n})_{(m,n)\in\mathbb Z^2}$ 
be a sequence from Lemma~\ref{260219}. 
Let $a\in S_{1+2d}$ be supported sufficiently close to $(s,y,\eta)$, 
and we are going to verify the condition \eqref{26021817}. 
If we set 
\begin{align}
\begin{split}
a_{m,n}(t,x,h\xi)
&=\chi_{m,n}\bigl((x-y)\xi/|\xi|-\epsilon m 2^{-n}|\xi|,|\xi|\bigr)a(t,hx,h\xi)
,
\end{split}
\label{26021915}
\end{align}
then we have 
\[
a(t,hx,h\xi)
=\sum_{(m,n)\in\mathbb Z^2}a_{m,n}(t,x,h\xi)
=\sum_{|m|\le \delta/h,\, |n+\log_2 h|\le C}
a_{m,n}(t,x,h\xi)
\]
for some $\delta,C>0$. 
Here for the above second identity we have used the support properties of $\chi_{m,n}$. 
We note that, for any small $\epsilon>0$ fixed, 
by squeezing the support of $a$ small enough, 
we may let $\delta>0$ be arbitrarily small uniformly in $h\in (0,h_0]$ with some $h_0\ll 1$. 
By the above decomposition and \eqref{25020716} 
we can estimate the left-hand side of \eqref{26021817} as 
\begin{align*}
&
\bigl\|
a^{\mathrm W}(t,hx,hp_x)u_K
\bigr\|_{L^2_{t,x}}
\\&
\le 
\sum_{|m|\le \delta/h,\, |n+\log_2 h|\le C}
\bigl\|a_{m,n}^{\mathrm W}(t,x,hp_x)u_K(t,x)\bigr\|_{L^2_{t,x}}
\\&
\le 
\sum_{|m|\le \delta/h,\, |n+\log_2 h|\le C}
\bigl\|a_{m,n}^{\mathrm W}(t,x+\epsilon m 2^{-n}p_x,hp_x)u_K(t-\epsilon m 2^{-n},x)\bigr\|_{L^2_{t,x}}
\\&
\le 
\sum_{|m|\le \delta/h,\, |n+\log_2 h|\le C}
\bigl\|a_{m,n}^{\mathrm W}(t+\epsilon m 2^{-n},x+\epsilon m 2^{-n}p_x,hp_x)u_K(t,x)\bigr\|_{L^2_{t,x}}
.
\end{align*}
Let us take a look at the (semiclassical) symbols of the last pseudodifferential operators: 
\begin{align}
\begin{split}
b_{m,n}(t,x,\xi)
:={}&
a_{m,n}(t+\epsilon m 2^{-n},x+\epsilon m 2^{-n}h^{-1}\xi,\xi)
\\
={}&\chi_{m,n}\bigl((x-y)\xi/|\xi|,h^{-1}|\xi|\bigr)a\bigl(t+\epsilon m 2^{-n},hx+\epsilon m 2^{-n}\xi,\xi\bigr)
.
\end{split}
\label{26021916}
\end{align}
As noted above, first let $\epsilon>0$ be small, and then squeeze $\mathop{\mathrm{supp}}a$ small,
and we have $b_{m,n}$ supported in $U$ with their derivatives bounded uniformly in 
$|m|\le \delta/h$, $|n+\log_2 h|\le C$ and $h\in(0,h_0]$. 
This implies for any $N\in\mathbb N$
\[
\bigl\|b_{m,n}^{\mathrm W}(t,x,hp_x)u_K\bigr\|_{L^2_{t,x}}
\le C_Nh^N
\]
uniformly in $|m|\le \delta/h$, $|n+\log_2 h|\le C$ and $h\in(0,h_0]$, and hence 
\begin{equation*}
\bigl\|a^{\mathrm W}(t,hx,hp_x)u_K\bigr\|_{L^2_{t,x}}
\le C'_Nh^{N-1}. 
\end{equation*}
We are done. 
\end{proof}

\begin{remark}
The symbol $a_{m,n}$ from \eqref{26021915}, with $\xi$ replaced by $h^{-1}\xi$, does not belong to 
$S_{1+2d}$ uniformly in $|m|\le \delta/h$ and $|n+\log_2 h|\le C$
since the first factor on the right-hand side dissatisfies the required estimates. 
On the other hand, the transformed symbol $b_{m,n}$ from \eqref{26021916} does so 
due to the fact that the second factor stays in the same symbol class 
under conjugation by the free propagator. 
\end{remark}

\subsubsection*{Acknowledgements} 
KI was partially supported by JSPS KAKENHI Grant Number JP23K03163. 
The authors thank Akitoshi Hoshiya for some references.

\providecommand{\bysame}{\leavevmode\hbox to3em{\hrulefill}\thinspace}
\providecommand{\MR}{\relax\ifhmode\unskip\space\fi MR }
\providecommand{\MRhref}[2]{%
  \href{http://www.ams.org/mathscinet-getitem?mr=#1}{#2}
}
\providecommand{\href}[2]{#2}

\end{document}